\theoremstyle{definition}
\newtheorem{assumption}{Assumption}
\newtheorem{theorem}{Theorem}
\newtheorem{lemma}{Lemma}
\newtheorem{corollary}{Corollary}
\newtheorem{condition}{Condition}
\def\indep{\begin{picture}(9,8)
         \put(0,0){\line(1,0){9}}
         \put(3,0){\line(0,1){8}}
         \put(6,0){\line(0,1){8}}
         \end{picture}
        }
\def\bT{{\bm{T}}}
\def\btau{{\bm{\tau}}}
\def\bX{{\bm{X}}}
\def\bW{{\bm{W}}}
\def\bU{{\bm{U}}}
\def\bV{{\bm{V}}}
\def\bI{{\bm{I}}}
\def\bbeta{{\bm{\beta}}}
\def\bgamma{{\bm{\gamma}}}
\def\bDelta{{\bm{\Delta}}}
\def\bS{{\bm{S}}}
\def\bzero{{\bm{0}}}
\def\bone{{\bm{1}}}
\def\bb{{\bm{b}}}
\def\ba{{\bm{a}}}
\def\bA{{\bm{A}}}
\def\bB{{\bm{B}}}
\def\bC{{\bm{C}}}
\def\bD{{\bm{D}}}
\def\bE{{\bm{E}}}
\def\bG{{\bm{G}}}
\def\bH{{\bm{H}}}
\def\bY{{\bm{Y}}}
\newcommand{\var}{\text{var}}
\newcommand{\cov}{\text{cov}}
\newcommand{\sumn}{\sum_{i=1}^n}
 \def\obs{\text{obs}}
 \def\S{\mathcal{S}}
  \def\CR{\textsc{CR}}
 \def\OLS{\textsc{OLS}}
  \def\RI{\textsc{RI}}
  \def\TSLS{\text{TSLS}}
\def\b1{\boldsymbol{1}}
\def\T{{\scriptscriptstyle  \texttt{T}}}
\def\d{\text{d}}
\begin{document}

\setlength{\baselineskip}{1.5\baselineskip}

\title{\bf \huge  Decomposing Treatment Effect Variation
\thanks{Peng Ding (Email: pengdingpku@berkeley.edu) is Assistant Professor, Department of Statistics, University of California, Berkeley. Avi Feller (Email: afeller@berkeley.edu) is Assistant Professor, Goldman School of Public Policy, University of California, Berkeley. Luke Miratrix (Email: lmiratrix@g.harvard.edu) is Assistant Professor, Harvard Graduate School of Education. We thank Alberto Abadie, Donald Rubin, participants at the Applied Statistics Seminar at the Harvard Institute of Quantitative Social Science, and colleagues at University of California, Berkeley and Harvard University for helpful comments. We gratefully acknowledge financial support from the Spencer Foundation through a grant entitled ``Using Emerging Methods with Existing Data from Multi-site Trials to Learn About and From Variation in Educational Program Effects,'' and from the Institute for Education Science (IES Grant \#R305D150040).
}
} 
\author{Peng Ding\\UC Berkeley \and Avi Feller\\UC Berkeley \and Luke Miratrix\\Harvard GSE}
\date{\today}
\maketitle

\begin{abstract}
Understanding and characterizing treatment effect variation in randomized experiments has become essential for going beyond the ``black box'' of the average treatment effect.  Nonetheless, traditional statistical approaches often ignore or assume away such variation.  
In the context of randomized experiments, this paper proposes a framework for decomposing overall treatment effect variation into a systematic component explained by observed covariates and a remaining idiosyncratic component.
Our framework is fully randomization-based, with estimates of treatment effect variation that are entirely justified by the randomization itself. Our framework can also account for noncompliance, which is an important practical complication. We make several contributions. First, we show that randomization-based estimates of systematic variation are very similar in form to estimates from fully-interacted linear regression and two stage least squares. Second, we use these estimators to develop an omnibus test for systematic treatment effect variation, both with and without noncompliance. Third, we propose an $R^2$-like measure of treatment effect variation explained by covariates and, when applicable, noncompliance. Finally, we assess these methods via simulation studies and apply them to the Head Start Impact Study, a large-scale randomized experiment.

\medskip 
\noindent {\bf Key Words}: Noncompliance; 
Heterogeneous treatment effect; Idiosyncratic treatment effect variation; Randomization inference; Systematic treatment effect variation.
\end{abstract}

\newpage 
\section{Introduction}

The analysis of randomized experiments has traditionally focused on the average treatment effect, often ignoring or assuming away treatment effect variation~\citep[e.g.,][]{neyman::1923, fisher::1935a, kempthorne::1952, rosenbaum::2002}. Today, understanding and characterizing treatment effect variation in randomized experiments has become essential for going beyond the ``black box'' of the average treatment effect. This is clear from the increasing number of papers on the topic in statistics and machine learning~\citep{hill2011bayesian, Athey:2015vu, wager2015estimation}, biostatistics~\citep{huang2012assessing, matsouaka2014evaluating}, education~\citep{raudenbush::2015}, economics~\citep{heckman::1997, crump::2008, djebbari::2008}, political science \citep{Green:2012kt, imai2013estimating}, and other areas.

This paper proposes a framework for decomposing overall treatment effect variation in a randomized experiment into a \textit{systematic component} that is explained by observed covariates, and an \textit{idiosyncratic component} that is not explained~\citep{heckman::1997, djebbari::2008}. In doing so, we make several key contributions. First, we take a fully randomization-based perspective \citep[see][]{rosenbaum::2002, imbens::2015}, and propose estimators that are entirely justified by the randomization itself. This is in contrast to much of the literature on randomization-based methods, where treatment effect variation is typically a nuisance~\citep[e.g.][]{rosenbaum1999reduced, rosenbaum2007confidence}. 
Similar to \citet{lin::2013}, we show that the resulting estimator is very similar in form to linear regression with interactions between the treatment indicator and covariates. Unlike with linear regression, however, the proposed estimator does not require any modeling assumptions on the marginal outcomes.

Second, we extend these methods from intention-to-treat (ITT) analysis to allow for noncompliance, proposing a randomized-based estimator for systematic treatment effect variation for the Local Average Treatment Effect (LATE) in the case of noncompliance~\citep{angrist::1996}. We show that this estimator is nearly identical to the two-stage least squares estimator with interactions between the treatment and covariates. We believe that this is a particularly novel contribution to the recent literature seeking to reconcile the randomization-based tradition in statistics and the linear model-based perspective more common in econometrics~\citep{abadie::2003, imbens2014instrumental, imbens::2015}.

Armed with these estimators, we turn to two practical tools for decomposing treatment effect variation. The first is an omnibus test for the presence of systematic treatment effect variation. While versions of this test have been proposed previously, largely in the context of linear models~\citep{cox1984interaction, crump::2008}, our proposed test is fully randomization-based and can also account for noncompliance. The second is to develop and bound an $R^2$-like measure of the fraction of treatment effect variation explained by covariates. This builds on previous versions proposed in the econometrics literature~\citep{heckman::1997, djebbari::2008}, again extending results to account for noncompliance. This approach is also closely related to the Oaxaca--Blinder decomposition in economics~\citep{oaxaca1973male, blinder1973wage}. See~\citet{angrist2013explaining} for a recent application that also addresses compliance. 
Finally, we apply these methods to the Head Start Impact Study, a large-scale randomized trial of Head Start, a federally funded preschool program \citep{puma2010head}.
We relegate the technical details and some further extensions to the online Supplementary Material.

\section{Framework for Treatment Effect Variation}
\label{sec::framework}

\subsection{Setup and notation}

Assume that we have $n$ units in an experiment. For unit $i$, let $\bX_i = (X_{1i}, \ldots, X_{Ki})^\T \in \mathbb{R}^K$ denote the vector of pretreatment covariates, with the constant 1 as its first component.
Let $T_i$ denote the treatment indicator with $1$ for treatment and $0$ for control. We use the potential outcomes framework~\citep{neyman::1923, rubin::1974} to define causal effects. Under the Stable Unit Treatment Value Assumption \citep{rubin::1980} that there is only one version of the treatment and no interference among units, we define $Y_i(1)$ and $Y_i(0)$ as the potential outcomes of unit $i$ under treatment and control, respectively. 
The observed outcome, $Y_i^\obs = T_iY_i(1) + (1-T_i)Y_i(0)$, is quite general and includes continuous, binary, and zero-inflated cases. 
On the difference scale, the individual treatment effect is $\tau_i = Y_i(1) - Y_i(0)$. 

Importantly, this is finite population inference in that we condition on the $n$ units at hand---the potential outcomes are fixed and pre-treatment.  This differs from super population inference in which some variables or residuals are assumed to be independent and identically distributed (iid) draws from some distribution. See, for example,~\citet{rosenbaum::2002}, \citet{imbens::2015} and \citet{li2017general}. Under the potential outcomes framework, $\{ Y_i(1), Y_i(0)\}_{i=1}^n$ are all fixed numbers; the randomness of any estimator comes from the assignment mechanism, which is the distribution of possible treatment assignments $\bT = (T_1, \ldots, T_n)^\T$.
Note that $\text{pr}\{  (T_1,\ldots, T_n)   = (t_1,\ldots, t_n)\} = \binom{n}{n_1}^{-1}$ if $\sum_{i=1}^n t_i = n_1$.

\subsection{Randomization inference for vector outcomes}
To set up our overall framework, we first generalize~\citet{neyman::1923}'s classic results to vector outcomes. 
We consider a completely randomized experiment, with $n_1$ units assigned to treatment and $n_0$ units assigned to control; in total we have $\binom{n}{n_1}$ possible randomizations. 
We are interested in estimating the finite population average treatment effect on a vector outcome $\bV  \in \mathbb{R}^K$:
$$
\btau_\bV = \frac{1}{n}  \sumn \left\{  \bV_i(1) - \bV_i(0) \right\},
$$
where $\bV_i(1)$ and $\bV_i(0)$ are the potential outcomes of $\bV$ for unit $i$. 
The Neyman-type unbiased estimator for $\btau_\bV$ is the difference between the sample mean vectors of the observed outcomes under treatment and control:
$$
\widehat{ \btau}_\bV = \bar{\bV}_1^{\obs} - \bar{\bV}_0^{\obs}
= \frac{1}{n_1}   \sumn T_i  \bV_i^\obs   - \frac{1}{n_0}  \sumn (1-T_i) \bV_i^\obs 
=\frac{1}{n_1}   \sumn T_i  \bV_i(1)   - \frac{1}{n_0}  \sumn (1-T_i) \bV_i(0) .
$$

The behavior of our estimator, and of our estimators for heterogeneity discussed later, revolve around covariances of vector outcomes.
For notation, let $\bA = \left\{ \bA_1, \ldots, \bA_n \right\}$ be a collection of $n$ vectors, with $\bar{\bA} = n^{-1}\sum_{i=1}^n A_i$ the vector mean, and define the covariance operator on $\bA$ as
$$
\mathcal{S}(\bA) =  \frac{1}{n-1}   \sumn  (  \bA_i - \bar{\bA}   )  (\bA_i - \bar{\bA})^\T , 
$$
which gives the covariance matrix of the $n$ vectors in $\bA$. For example, $\bA_i$ can be $\bV_i(1), \bV_i(0)$ or $\bV_i(1) - \bV_i(0).$

The following theorem, generalizing the results for scalar outcomes from \citet{neyman::1923}, demonstrates that $\widehat{ \btau}_\bV $ is unbiased and gives its covariance matrix.   

\begin{theorem}\label{thm::general-neyman}
Over all possible randomizations of a completely randomized experiment, $\widehat{ \btau}_\bV $ is unbiased for $\btau_\bV$, with $K \times K$ covariance matrix:
\begin{eqnarray}
\cov(\widehat{ \btau}_\bV) 
= \frac{    \mathcal{S} \{  \bV(1)  \} }{n_1} + 
 \frac{    \mathcal{S} \{ \bV(0) \} }{n_0}  - 
 \frac{    \mathcal{S} \{   \bV(1)  - \bV(0) \}  }{n}. 
 \label{eq::neyman-vector}
\end{eqnarray}
\end{theorem}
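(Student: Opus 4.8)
The plan is to reduce the vector statement to the classical scalar Neyman result by projecting onto an arbitrary fixed direction; this avoids redoing the second-moment bookkeeping of the assignment mechanism in $K$ dimensions. Fix any $\ba \in \mathbb{R}^K$ and define the scalar pseudo-outcome $U_i(z) = \ba^\T \bV_i(z)$ for $z \in \{0,1\}$. Because $\widehat{\btau}_\bV$ is linear in the observed outcomes, the scalar contrast $\ba^\T \widehat{\btau}_\bV$ is exactly the ordinary difference-in-means estimator $\widehat{\tau}_U$ built from the $U_i^\obs$, and likewise $\ba^\T\btau_\bV = \tau_U$. Unbiasedness then follows immediately: applying Neyman's scalar result to $U$ gives $E(\ba^\T\widehat{\btau}_\bV) = \ba^\T\btau_\bV$ for every $\ba$, hence $E(\widehat{\btau}_\bV) = \btau_\bV$ componentwise.

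For the covariance, I would first record the elementary fact that the operator $\mathcal{S}$ commutes with linear functionals. Since $U_i(z) - \bar{U}(z) = \ba^\T\{\bV_i(z) - \bar{\bV}(z)\}$, squaring and averaging yields $\mathcal{S}\{U(z)\} = \ba^\T \mathcal{S}\{\bV(z)\}\,\ba$, and the same identity applied to the unit-level differences gives $\mathcal{S}\{U(1)-U(0)\} = \ba^\T \mathcal{S}\{\bV(1)-\bV(0)\}\,\ba$. Substituting these three scalars into the scalar Neyman variance formula $\var(\widehat{\tau}_U) = \mathcal{S}\{U(1)\}/n_1 + \mathcal{S}\{U(0)\}/n_0 - \mathcal{S}\{U(1)-U(0)\}/n$, and noting $\var(\ba^\T\widehat{\btau}_\bV) = \ba^\T \cov(\widehat{\btau}_\bV)\,\ba$, gives
\begin{equation*}
\ba^\T \cov(\widehat{\btau}_\bV)\,\ba = \ba^\T\left[ \frac{\mathcal{S}\{\bV(1)\}}{n_1} + \frac{\mathcal{S}\{\bV(0)\}}{n_0} - \frac{\mathcal{S}\{\bV(1)-\bV(0)\}}{n}\right]\ba .
\end{equation*}

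Since this holds for every $\ba \in \mathbb{R}^K$ and both sides are quadratic forms in symmetric matrices, the two matrices must coincide, which is precisely \eqref{eq::neyman-vector}; equivalently, one recovers each off-diagonal entry directly by polarization, applying the argument to $\ba = \mathbf{e}_k + \mathbf{e}_l$. The only genuine probabilistic input is the scalar Neyman result, so the main thing to be careful about is its justification if one does not wish to cite it outright. That self-contained derivation is the hard part, such as it is: it needs the second moments of the completely randomized assignment, $\var(T_i) = n_1 n_0/n^2$ and $\cov(T_i,T_j) = -n_1 n_0/\{n^2(n-1)\}$ for $i \neq j$, together with the algebraic identity linking the $\sum_i\sum_{j\neq i}$ cross terms to $\mathcal{S}\{U(1)\}$, $\mathcal{S}\{U(0)\}$, and $\mathcal{S}\{U(1)-U(0)\}$—the $-\mathcal{S}\{U(1)-U(0)\}/n$ term arising exactly from the negative within-assignment covariance of the $T_i$. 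Once the scalar case is established, the vector result requires no further computation beyond the linear-functional reduction above.
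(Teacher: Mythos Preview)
Your proof is correct but takes a genuinely different route from the paper. The paper works directly in $K$ dimensions: it stacks the potential outcomes into $K\times n$ matrices $\mathcal{V}_1,\mathcal{V}_0$, writes $\widehat{\btau}_\bV$ as an affine function of the assignment vector $\bT$, and then computes $\cov(\widehat{\btau}_\bV)$ by sandwiching $\cov(\bT)=\frac{n_1n_0}{n(n-1)}\bS_n$ (with $\bS_n=\bI_n-n^{-1}\bone_n\bone_n^\T$) between the coefficient matrices. The cross term $\mathcal{V}_1\bS_n\mathcal{V}_0^\T+\mathcal{V}_0\bS_n\mathcal{V}_1^\T$ is then collapsed via the identity $\ba\bb^\T+\bb\ba^\T=\ba\ba^\T+\bb\bb^\T-(\ba-\bb)(\ba-\bb)^\T$, which is exactly what produces the $-\mathcal{S}\{\bV(1)-\bV(0)\}/n$ term. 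Your approach instead projects onto a fixed $\ba$, invokes the scalar Neyman formula, and recovers the matrix identity by polarization. Your argument is shorter and reuses the scalar result as a black box; the paper's direct matrix computation is more self-contained and, more importantly, sets up machinery (the $\bS_n$ representation and the covariance-operator lemma $\mathcal{V}\bS_n\mathcal{V}^\T=(n-1)\mathcal{S}(\bV)$) that is reused in later proofs, so in context it earns its keep.
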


The diagonal elements of this matrix are the variances of the estimators of each component of $\btau_\bV$. 
The covariance matrix of $\widehat{ \btau}_\bV$ depends on the various covariances of the potential outcomes under treatment and control. 
In particular, the last term depends on the correlation between the potential outcomes $\bV(1)$ and $\bV(0)$, and therefore cannot be identified from the observed data. When the individual treatment effects are constant for all components of $\bV$, the last term in the above covariance matrix vanishes, because then $\mathcal{S} \{   \bV(1)  - \bV(0) \}  = \bzero_{K\times K}$. 
Under this assumption, we can unbiasedly estimate the sampling covariance matrix $\cov(\widehat{ \btau}_\bV) $ by replacing the covariances of the potential outcomes by the sample analogues:
$$
\widehat{\cov}(\widehat{\btau}_\bV) = 
\frac{\widehat{\mathcal{S}}_1(\bV^\obs)}{n_1} + \frac{\widehat{\mathcal{S}}_0(\bV^\obs)}{n_0},
$$ 
where
\begin{eqnarray}
\label{eq::sample-cov}
\widehat{\mathcal{S}}_t(  \bV^\obs ) = \frac{1}{n_t-1} \sumn I_{(T_i=t)}(\bV_i - \bar{\bV}_t^\obs) (\bV_i - \bar{\bV}_t^\obs)^\T \qquad (t = 0,1)
\end{eqnarray}
are the sample covariance matrices of $\bV^\obs$ in the treatment and control groups.
Without the constant treatment effect assumption, the covariance estimator $\widehat{\cov}(\widehat{\btau}_\bV)$ is conservative in the sense that the difference between the expectation of the variance estimator and the true variance is a non-negative definite matrix.
In particular, the diagonal terms of the expected estimator will all be larger than the truth.
Letting $K = 1$, the covariance matrices become simple variances, which recovers Neyman's original result.

Using the mathematical framework introduced in the Appendix and in~\citet{li2017general}, we can easily generalize Theorem \ref{thm::general-neyman} to more complicated experimental designs, e.g., cluster-randomized trials~\citep{middleton2016unbiased} and unbalanced $2^2$ split-plot designs \citep{zhao2016randomization}.

\subsection{Decomposing Treatment Effect Variation}

We now apply this general framework to treatment effect variation. We decompose the individual treatment effect, $\tau_i  $, via 
\begin{eqnarray}\label{eq::main-decomposition}
\tau_i = Y_i(1) - Y_i(0) =  \bX_i ^\T \bbeta + \varepsilon_i  , \quad (i=1, \ldots, n) 
\end{eqnarray}
with $\bbeta$ being the finite population linear regression coefficient of $\tau_i$ on $\bX_i$, defined by 
\begin{equation}
\bbeta = \arg \min_{\bb\in \mathbb{R}^K} \sum_{i=1}^n \left( \tau_i - \bX_i^\T \bb \right)^2 \label{eq::beta-minimization}.	
\end{equation} 
Following~\citet{heckman::1997} and~\citet{djebbari::2008}, we call $\delta_i = \bX_i^\T \bbeta $ the \textit{systematic treatment effect variation} explained by the observed covariates, $\bX_i$, and call $\varepsilon_i$  the \textit{idiosyncratic treatment effect variation} not explained by $\bX_i$.

More generally, we can view this decomposition in a regression-style framework.
Define 
$$
\bS_{xx} = \frac{1}{n} \sum_{i=1}^n \bX_i \bX_i^\T \in \mathbb{R}^{K\times K} ,  \quad 
\bS_{x\varepsilon}  = \frac{1}{n} \sum_{i=1}^n \varepsilon_i \bX_i \in \mathbb{R}^K ,  \quad 
\bS_{x\tau} = \frac{1}{n} \sum_{i=1}^n \tau_i \bX_i \in \mathbb{R}^K , 
$$
where $\bS_{xx}$ is non-degenerate, analogous to the usual full rank assumption in linear models. 
Also define
$$
\bS_{xt} = \frac{1}{n} \sumn \bX_i Y_i(t)  \in \mathbb{R}^{K}  , \quad (t=0,1) . 
$$ 
These are all finite population quantities, as in they are fixed pre-randomization values. 
The definition of $\bbeta$ gives $\bS_{x\varepsilon}=0$, i.e., $\varepsilon_i$ and $\bX_i$ have finite population covariance zero. Therefore, in the spirit of the agnostic regression framework~\citep[e.g.,][]{lin::2013}, the systematic component, $\delta_i = \bX_i ^\T \bbeta$, is a projection of $\tau_i$ onto the linear space spanned by $\bX_i$, and the idiosyncratic treatment effect, $\varepsilon_i$, is the corresponding residual. The linear projection applies to general outcomes, including the binary case.

Because of our finite population focus, if we observed all the potential outcomes we could immediately calculate all individual treatment effects and apply standard linear regression theory to (\ref{eq::main-decomposition}) and obtain $\bbeta$. 
In particular, the solution of~\eqref{eq::beta-minimization}, i.e. the ordinary least squares (OLS) solution from regressing $\tau$ on $\bX$, is
\begin{equation}
\bbeta = \bS_{xx}^{-1} \bS_{x\tau} = \bS_{xx}^{-1} \bS_{x1} -  \bS_{xx}^{-1} \bS_{x0} \equiv  \bgamma_1 - \bgamma_0 , \label{eq:population_beta}	
\end{equation}
where $\bgamma_1 = \bS_{xx}^{-1} \bS_{x1}$ and $\bgamma_0 = \bS_{xx}^{-1} \bS_{x0}$ are the corresponding finite population regression coefficients of the potential outcomes on the covariates. 
Let $
e_i(1) =  Y_i(1) - \bX_i^\T \bgamma_1$ and
$ 
e_i(0) = Y_i(0) - \bX_i^\T \bgamma_0
$
be the residual potential outcomes from the regression of $Y_i(t)$ onto $\bX$. 
Our idiosyncratic treatment variation is then the difference of residuals: $\varepsilon_i = e_i(1) - e_i(0)$.
In practice, we do not fully observe these components, but we can obtain unbiased or consistent estimates for them as we discuss below.

\section{Systematic treatment effect variation for the ITT}
\label{sec::systematic}

\subsection{Randomization-based estimator}
\label{sec::ri-hte}

We now turn to estimating $\bbeta$. As shown in~\eqref{eq:population_beta}, $\bbeta$ has three components.
The first term, $\bS_{xx}$, is fully observed as all the covariates are observed.
Our estimation then depends on the sample analogues of $\bS_{x1}$ and $\bS_{x0}$:
$$
\widehat{\bS}_{x1} = \frac{1}{n_1} \sum_{i=1}^n T_iY_i^\obs \bX_i \in \mathbb{R}^K ,\quad 
\widehat{\bS}_{x0} = \frac{1}{n_0} \sum_{i=1}^n (1-T_i) Y_i^\obs \bX_i  \in \mathbb{R}^K .
$$  
The $\widehat{\bS}_{xt}$'s capture how the observed potential outcomes correlate with the covariates. 
Plug these into~\eqref{eq:population_beta} to obtain an overall estimate of $\bbeta$.
The randomization of $\bT$ then justifies the following theorem.

\begin{theorem}
\label{thm::randomization-inference}
Under decomposition (\ref{eq::main-decomposition}), $\bS_{xx}^{-1}\widehat{\bS}_{x1}$ and $\bS_{xx}^{-1}\widehat{\bS}_{x0}$ are unbiased estimates of $\bgamma_1$ and $\bgamma_0$, respectively.
Therefore 
$$
\widehat{\bbeta}_{\RI} = \bS_{xx}^{-1}\widehat{\bS}_{x1} - \bS_{xx}^{-1}\widehat{\bS}_{x0},
$$
is an unbiased estimator for $\bbeta$ with covariance matrix
\begin{eqnarray}
\cov(\widehat{\bbeta}_{\RI}) = 
\bS_{xx}^{-1} 
\left[   \frac{  \S\{  Y(1)  \bX   \} }{ n_1} +\frac{  \S\{  Y(0)  \bX  \} } { n_0  } -  \frac{ \S( \tau \bX  ) } { n} \right ]
 \bS_{xx}^{-1}.
 \label{eq::covariance}
\end{eqnarray}
\end{theorem}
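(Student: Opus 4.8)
The plan is to recognize $\widehat{\bbeta}_{\RI}$ as a fixed linear transformation of a Neyman-type estimator for a cleverly chosen vector outcome, thereby reducing everything to Theorem \ref{thm::general-neyman}. Define the reweighted potential outcome $\bV_i(t) = Y_i(t)\bX_i \in \mathbb{R}^K$ for $t=0,1$. Because $T_iY_i^\obs = T_iY_i(1)$ and $(1-T_i)Y_i^\obs = (1-T_i)Y_i(0)$, we have $\widehat{\bS}_{x1} = n_1^{-1}\sumn T_i\bV_i(1)$ and $\widehat{\bS}_{x0} = n_0^{-1}\sumn (1-T_i)\bV_i(0)$, so that $\widehat{\bS}_{x1} - \widehat{\bS}_{x0}$ is exactly the estimator $\widehat{\btau}_\bV$ from Section \ref{sec::framework} applied to $\bV$. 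Since all covariates are pre-treatment, $\bS_{xx}$ is a fixed, non-random, symmetric matrix, and hence $\widehat{\bbeta}_{\RI} = \bS_{xx}^{-1}\widehat{\btau}_\bV$ is a deterministic linear map of $\widehat{\btau}_\bV$.

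First I would establish the two unbiasedness claims directly. Under complete randomization each unit has marginal assignment probability $\pr(T_i=1) = n_1/n$, and $\bV_i(1)$ is a fixed quantity, so $E(\widehat{\bS}_{x1}) = n_1^{-1}\sumn (n_1/n)Y_i(1)\bX_i = \bS_{x1}$, and symmetrically $E(\widehat{\bS}_{x0}) = \bS_{x0}$. Premultiplying by the fixed matrix $\bS_{xx}^{-1}$ gives $E(\bS_{xx}^{-1}\widehat{\bS}_{x1}) = \bS_{xx}^{-1}\bS_{x1} = \bgamma_1$ and likewise $\bgamma_0$; subtracting yields $E(\widehat{\bbeta}_{\RI}) = \bgamma_1 - \bgamma_0 = \bbeta$ by \eqref{eq:population_beta}.

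For the covariance I would invoke Theorem \ref{thm::general-neyman} for the vector outcome $\bV$, so the only remaining algebra is to match its three terms to \eqref{eq::covariance}. By construction $\S\{\bV(1)\} = \S\{Y(1)\bX\}$ and $\S\{\bV(0)\} = \S\{Y(0)\bX\}$, while the difference simplifies as $\bV_i(1) - \bV_i(0) = \{Y_i(1) - Y_i(0)\}\bX_i = \tau_i\bX_i$, giving $\S\{\bV(1) - \bV(0)\} = \S(\tau\bX)$. Finally, because $\widehat{\bbeta}_{\RI} = \bS_{xx}^{-1}\widehat{\btau}_\bV$ with $\bS_{xx}^{-1}$ fixed and symmetric, the covariance transforms by congruence, $\cov(\widehat{\bbeta}_{\RI}) = \bS_{xx}^{-1}\cov(\widehat{\btau}_\bV)\bS_{xx}^{-1}$, which is precisely \eqref{eq::covariance}.

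The main obstacle is conceptual rather than computational: spotting that the substitution $\bV_i(t) = Y_i(t)\bX_i$ collapses the whole problem onto the already-proved vector Neyman theorem. Once that reduction is in place every step is routine, with unbiasedness following from $\pr(T_i=1) = n_1/n$ and the covariance being a direct transcription of Theorem \ref{thm::general-neyman} followed by a congruence transformation. The one point worth double-checking is that $\bS_{xx}$ carries no randomness, so that it passes unchanged through both the expectation and the covariance operator; this is guaranteed by the finite-population, pre-treatment nature of the covariates $\bX_i$.
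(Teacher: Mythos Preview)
Your proposal is correct and follows essentially the same route as the paper: both recognize $\widehat{\bS}_{x1} - \widehat{\bS}_{x0}$ as the Neyman-type difference-in-means estimator for the vector outcome $\bV_i(t) = Y_i(t)\bX_i$, apply Theorem~\ref{thm::general-neyman} to obtain its covariance, and then sandwich by the fixed matrix $\bS_{xx}^{-1}$. Your write-up is in fact slightly more explicit than the paper's in spelling out the identification $\bV_i(1)-\bV_i(0)=\tau_i\bX_i$ and the congruence step.
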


Here, for example, $\S\{ Y(0) \bX \}$ denotes the covariance operator on new unit-level variables $ Y_i(0)\bX_i \in \mathbb{R}^K$, made by scaling the $\bX_i$ vector of each unit by $Y_i(0)$,  similarly for $\S\{Y(1)\bX\}$ and $\S(  \tau\bX ) $.
This slight abuse of notation gives formulae less cluttered by subscripts and excessive annotation.
As with the vector version of Neyman's formula, the square root of the diagonal of $\cov(\widehat{\bbeta}_{\RI})$ gives the standard errors of $\beta$.

The covariance formula (\ref{eq::covariance}) generalizes the result of~\citet{neyman::1923} for the average treatment effect, reducing to Neyman's formula if $\bX_i=1$ for all units. 
%In Theorem \ref{thm::randomization-inference}, $\bS_{xx}$ is known for the finite population, rather than estimated. 
%
We can obtain a ``conservative'' estimate of $\cov(\widehat{\bbeta}_{\RI})$ by
$$
\widehat{\cov}(\widehat{\bbeta}_{\RI})  = 
\bS_{xx}^{-1} 
\left[   \frac{  \widehat{\S}_1(Y^\obs \bX ) }{ n_1} +\frac{  \widehat{\S}_0( Y^\obs \bX ) } { n_0  }  \right ]
 \bS_{xx}^{-1},
$$
recalling the definitions of the sample covariance operators $\widehat{\mathcal{S}}_1$ and $\widehat{\mathcal{S}}_0$ introduced in \eqref{eq::sample-cov}.
Similar to~\citet{neyman::1923}, this implicitly assumes $\S( \tau \bX  ) = \bzero$.
Under the assumption that $\varepsilon_i = 0$ for all units (i.e., no idiosyncratic variation whatsoever), we can instead use $\mathcal{S}( \widehat{\tau} \bX )$ with $\widehat{\tau} = \bX_i^\T \widehat{\bbeta}_\RI$ as a plug-in estimate for $\S( \tau  \bX  )$. This yields tighter standard errors based on the diagonal elements of the covariance matrix.

\paragraph{Finite population asymptotic analysis}
Theorem~\ref{thm::randomization-inference} holds for any finite sample.
To obtain confidence intervals and to conduct hypothesis testing as we describe below, we need to prove further that $ \widehat{\bbeta}_\RI$ is asymptotically normal with mean $\bbeta$ and covariance $\cov(\widehat{\bbeta}_{\RI} )$. Finite population asymptotic analysis, however, has a slightly different flavor from the usual super population approach. Formally, the finite asymptotic scheme embeds the finite population $\{ (\bX_i, Y_i(1), Y_i(0)) \}_{i=1}^n$ with size $n$ into a hypothetical sequence of finite populations with sizes approaching infinity. This effectively assumes that all the finite population quantities, for example, $\bS_{xx}$ and $\bbeta$, depend on $n$, although they are fixed numbers for a given finite population. Moreover, the sample quantities such as $\widehat{\bS}_{x1}$ and $\widehat{\bbeta}_{\RI}$ depend on $n$ as well, and are random quantities due to the randomization of $\bT$. For notational simplicity, we drop the index $n$ for all these quantities. Importantly, we must impose some regularity conditions on the hypothetical sequence of finite populations. Throughout the paper, we invoke the following conditions for asymptotic analysis, which are required for a form of the finite population central limit theorem discussed in~\citet[][Theorem 5]{li2017general}.

\begin{condition}
(i) Stable treatment proportions: $p_1=n_1/n$ and $p_0=n_0/n$ have positive limiting values; (ii) Stable means, variances and covariances: the finite population means, variances and covariances of the covariates and potential outcomes have finite limiting values; (iii) $\max_{1\leq i\leq n}||\bV_i - \bar{\bV}  ||^2_2/n \rightarrow 0$, where $\bV_i$ can be the covariate vector, the outcome, and the products of them. 
\end{condition}

Under these conditions, we can extend Theorem~\ref{thm::randomization-inference} to a sequence of finite populations:
\begin{eqnarray}   \label{eq::asynormal}
\sqrt{n} \left( \widehat{\bbeta}_\RI - \bbeta \right)  \stackrel{d}{\rightarrow} \mathcal{N}\left( \bzero, \lim_{n\rightarrow \infty}\left[  p_1 \S\{  Y(1)  \bX   \} + p_0 \S\{  Y(0)  \bX  \} - \S( \tau \bX  ) \right]  \right).
\end{eqnarray}
As a result, we can state that $\widehat{\bbeta}_{\RI}$ is approximately normal with mean $\bbeta$ and covariance matrix \eqref{eq::covariance}, which allows us to construct confidence intervals and hypothesis tests. In our theory below, we use this informal statement instead of \eqref{eq::asynormal} to avoid notational complexity. 

Conditions (i) and (ii) are natural. Condition (iii) holds if $\bV$ has more than two moments \citep{li2017general}. For bounded covariates and outcomes, (iii) is satisfied automatically.   
For more technical discussion of finite population causal inference, see \citet{ding::2014}, \citet{aronow::2014}, and \citet{middleton2016unbiased}; for regularity conditions of the finite population central limit theorems, see \citet{hajek::1960} and \citet{lehmann::1998}. A recent review is \citet{li2017general}.

\subsection{Regression with treatment-covariate interactions}
\label{sec:ols_int}
The results from randomization inference can shed light on the familiar case of linear regression with treatment-covariate interactions.
This classical approach assumes the model
\begin{equation}\label{eq::reg_int}
Y_i^\obs =  \bX_i ^\T  \bgamma + T_i   \bX_i^\T \bbeta   + u_i , \qquad (i=1, \ldots, n)
\end{equation}
where $\{  u_i \}_{i=1}^n$ are errors implicitly assumed to induce the randomness, and where $\bbeta$ models systematic treatment effect variation, as in~\eqref{eq::main-decomposition}.
Departing from much of the previous literature~\citep[e.g.,][]{cox1984interaction, berrington::2007, crump::2008}, we study the properties of the least squares estimator under complete randomization, without assuming that model \eqref{eq::reg_int} is correctly specified.
In particular, we do not assume any i.i.d. sampling; the assignment mechanism drives the distribution of the OLS estimator.

\begin{theorem}
\label{theorem::ols}
The OLS estimator for $\bbeta$ from fitting model~(\ref{eq::reg_int}) can be rewritten as
$$
\widehat{\bbeta}_{\OLS} = \widehat{\bS}_{xx,1}^{-1} \widehat{\bS}_{x1} - \widehat{\bS}_{xx,0}^{-1} \widehat{\bS}_{x0},
$$ 
where 
$$
\widehat{\bS}_{xx,t} = \frac{1}{n_t} \sumn I_{(T_i = t)}\bX_i \bX_i^\T, \quad (t = 0,1).
$$
Over all possible randomizations of $\bT$, $\widehat{\bS}_{xx,1}^{-1} \widehat{\bS}_{x1}$ and $\widehat{\bS}_{xx,0}^{-1} \widehat{\bS}_{x0}$ are consistent estimates of $\bgamma_1$ and $\bgamma_0$ respectively;
 $\widehat{\bbeta}_{\OLS}$ 
therefore follows an asymptotic normal distribution with mean $\bbeta$ and covariance matrix
\begin{eqnarray}
\cov(\widehat{\bbeta}_{\OLS}) = 
\bS_{xx}^{-1} 
\left[   \frac{  \S\{  e(1) \bX   \} }{ n_1} +\frac{  \S\{  e(0) \bX    \} } { n_0  } -  \frac{ \S( \varepsilon \bX  ) } { n} \right ]
 \bS_{xx}^{-1}.
 \label{eq::covariance-ols}
\end{eqnarray}
with $e_i(1), e_i(0)$, and $\varepsilon_i$ as defined after~\eqref{eq:population_beta}.
\end{theorem}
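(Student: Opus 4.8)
The plan is to split the statement into three pieces: (i) the algebraic identity rewriting the interacted-regression estimator as a difference of two within-group regression coefficients, (ii) consistency, and (iii) asymptotic normality with the stated covariance. For (i), I would reparametrize model~\eqref{eq::reg_int} by writing $\bX_i^\T\bgamma + T_i\bX_i^\T\bbeta = (1-T_i)\bX_i^\T\bgamma + T_i\bX_i^\T(\bgamma+\bbeta)$, so that the regressors split into the two blocks $(1-T_i)\bX_i$ and $T_i\bX_i$. Because $T_i(1-T_i)=0$, the cross-block Gram matrix $\sum_i (1-T_i)T_i\bX_i\bX_i^\T$ vanishes and the normal equations decouple into two separate least-squares fits, one on the treated subsample and one on the control subsample. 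Their coefficients are exactly $\widehat{\bS}_{xx,1}^{-1}\widehat{\bS}_{x1}$ and $\widehat{\bS}_{xx,0}^{-1}\widehat{\bS}_{x0}$, and subtracting recovers $\widehat{\bbeta}_{\OLS}$.

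For (ii), note that under complete randomization $E[\widehat{\bS}_{xt}] = \bS_{xt}$ and $E[\widehat{\bS}_{xx,t}] = \bS_{xx}$, since each unit is assigned to arm $t$ with the correct marginal probability $n_t/n$. The finite population law of large numbers (available under Condition~1 from the machinery of~\citet{li2017general}) then gives $\widehat{\bS}_{xt}\to\bS_{xt}$ and $\widehat{\bS}_{xx,t}\to\bS_{xx}$ in probability; since $\bS_{xx}$ is nondegenerate, the continuous mapping theorem yields $\widehat{\bS}_{xx,t}^{-1}\widehat{\bS}_{xt}\to\bgamma_t$, hence $\widehat{\bbeta}_{\OLS}\to\bgamma_1-\bgamma_0=\bbeta$.

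The core of the argument is (iii), and the key device is to introduce the vector ``potential outcomes'' $\bW_i(t)=e_i(t)\bX_i$. Writing $\widehat{\bS}_{x1}-\widehat{\bS}_{xx,1}\bgamma_1 = n_1^{-1}\sum_i T_i\bX_i(Y_i(1)-\bX_i^\T\bgamma_1) = n_1^{-1}\sum_i T_i\bW_i(1)$, and likewise for the control arm, gives the exact decomposition
$$
\widehat{\bbeta}_{\OLS} - \bbeta = \widehat{\bS}_{xx,1}^{-1}\Big(\frac{1}{n_1}\sum_i T_i\bW_i(1)\Big) - \widehat{\bS}_{xx,0}^{-1}\Big(\frac{1}{n_0}\sum_i (1-T_i)\bW_i(0)\Big).
$$
The crucial observation is that $\bW_i(t)$ has finite population mean zero, $n^{-1}\sum_i e_i(t)\bX_i = \bS_{xt}-\bS_{xx}\bgamma_t = \bzero$, by the very definition of the population coefficient $\bgamma_t$. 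Hence each within-arm average is a centered Neyman-type estimator of a zero target and is $O_p(n^{-1/2})$. Replacing $\widehat{\bS}_{xx,t}^{-1}$ by its limit $\bS_{xx}^{-1}$ therefore costs only $o_p(n^{-1/2})$, yielding the linearization $\widehat{\bbeta}_{\OLS}-\bbeta = \bS_{xx}^{-1}\widehat{\btau}_\bW + o_p(n^{-1/2})$, where $\widehat{\btau}_\bW$ is the estimator of Theorem~\ref{thm::general-neyman} applied to $\bW$. That theorem then supplies both the exact covariance of $\widehat{\btau}_\bW$ and, through the finite population central limit theorem under Condition~1, its asymptotic normality. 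Finally I would use the identities $\bW_i(1)-\bW_i(0) = (e_i(1)-e_i(0))\bX_i = \varepsilon_i\bX_i$ and $\S\{\bW(t)\} = \S\{e(t)\bX\}$ to rewrite $\cov(\widehat{\btau}_\bW)$ as the bracketed term in~\eqref{eq::covariance-ols}, and conjugating by $\bS_{xx}^{-1}$ gives the claimed covariance.

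The main obstacle is controlling the linearization error in step (iii): one must verify that $(\widehat{\bS}_{xx,t}^{-1}-\bS_{xx}^{-1})$ multiplied by the centered within-arm average is genuinely $o_p(n^{-1/2})$. This hinges on the mean-zero property of $\bW_i(t)$ (so that the average is $O_p(n^{-1/2})$ rather than $O_p(1)$), together with $\widehat{\bS}_{xx,t}\to\bS_{xx}$ and the nondegeneracy of $\bS_{xx}$; one also needs Condition~1(iii) to cover the products $e_i(t)\bX_i$ so that the finite population central limit theorem applies to $\bW$. It is precisely this mean-zero residual structure that converts the $Y$-based covariance of $\widehat{\bbeta}_\RI$ into the $e$- and $\varepsilon$-based covariance here, mirroring the agnostic-regression phenomenon of~\citet{lin::2013}.
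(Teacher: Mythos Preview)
Your proposal is correct and matches the paper's proof almost step for step: the paper also reduces the interacted regression to two within-arm fits, writes $\widehat{\bbeta}_\OLS - \bbeta$ with $\widehat{\bS}_{xt} - \widehat{\bS}_{xx,t}\bgamma_t = n_t^{-1}\sum_{i:T_i=t}\bX_i e_i(t)$ playing the role of your $\bW$-averages, replaces $\widehat{\bS}_{xx,t}^{-1}$ by $\bS_{xx}^{-1}$ at cost $O_P(n^{-1/2})\cdot O_P(n^{-1/2}) = O_P(n^{-1})$, and then invokes Theorem~\ref{thm::general-neyman} on the vector outcome $\bX e$ together with the finite population CLT. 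Your explicit identification of the mean-zero property $n^{-1}\sum_i e_i(t)\bX_i = \bzero$ as the reason the second factor is $O_P(n^{-1/2})$ is precisely what makes the paper's ratio-estimator bound work, though the paper leaves this implicit.
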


This estimate is simply the difference between $\widehat{\bgamma}_{1,\OLS} =  \widehat{\bS}_{xx,1}^{-1} \widehat{\bS}_{x1} $ and $\widehat{\bgamma}_{0,\OLS} =  \widehat{\bS}_{xx,0}^{-1} \widehat{\bS}_{x0}$, two OLS regressions run separately on each treatment arm. For treated units, define residual $\widehat{e}_i = Y_i^\obs - \bX_i^\T  \widehat{\bgamma}_{1,\OLS}$, and for control units, define residual $\widehat{e}_i = Y_i^\obs - \bX_i^\T  \widehat{\bgamma}_{0,\OLS}$. 
We can drop the unidentifiable term $\S( \varepsilon  \bX  )$, estimate $\S\{    e(1) \bX \} $ and $\S\{   e(0) \bX \} $ by their sample analogues, and conservatively estimate the asymptotic covariance matrix \eqref{eq::covariance-ols} by
\begin{eqnarray*}
\widehat{\cov}(\widehat{\bbeta}_\OLS) = 
\widehat{\bS}_{xx,1}^{-1} 
\left[
\frac{  \widehat{\S}_1(\widehat{e}   \bX  )   }{  n_1 } 
\right] \widehat{\bS}_{xx,1}^{-1} 
+\widehat{\bS}_{xx,0}^{-1} \left[ 
\frac{  \widehat{\S}_0(  \widehat{e} \bX )   }{  n_0 } 
 \right ]
\widehat{\bS}_{xx,0}^{-1} .
\end{eqnarray*}
This form of the sandwich variance estimator has the same probability limit as the Huber–White covariance estimator for linear model \eqref{eq::reg_int}~\citep{huber::1967, white::1980, lin::2013, angrist::2008}.

Importantly, $\widehat{\bbeta}_{\text{RI}}$ and $\widehat{\bbeta}_{\text{OLS}}$ are quite similar in form. 
In particular, $\widehat{\bbeta}_\RI$ uses the true $\bS_{xx}$ while $\widehat{\bbeta}_\OLS$ separately estimates the covariance matrix for each treatment arm, $ \widehat{\bS}_{xx,0}$ and $ \widehat{\bS}_{xx,1}$.  
The latter is effectively a ratio estimator.
Although this introduces some small bias (on the order of $1/n$), using the estimated $\widehat{\bS}_{xx,t}$ rather than true $\bS_{xx}$ can often lead to gains in precision, especially when covariates are strongly correlated with the potential outcomes.
In particular, the OLS estimator, by separately estimating the (known) $\bS_{xx}$ matrix for each treatment arm, can account for random imbalances in the covariates in both arms. 

The RI estimator, by comparison, has no adjustment whatsoever, and so cannot account for such random covariate imbalances.  
However, in Section \ref{sec::additional} below, and in the supplementary materials we introduce a different form of adjustment that uses covariates to make the estimates of the $\bS_{xt}$ more precise.
Depending on the structure of covariates, this estimator could be better or worse than OLS adjustment; we leave a thorough investigation of these trade-offs for future work.

Regardless, we again emphasize that we do {\em not} rely on classical OLS assumptions to justify the OLS estimator here. Rather, randomization (plus some mild regularity conditions for the finite sample asymptotics) justify our results.
For related discussion, see \citet{cochran::1977} on ratio estimators in surveys.

\subsection{Omnibus test for systematic variation}\label{sec::omnibus}
Finally, we can use these results to develop an omnibus test for the presence of any systematic treatment effect variation. The null hypothesis of no treatment effect variation explained by the observed covariates can be characterized by
$$
H_0(\bX): \bbeta_1 = 0,
$$
where $\bbeta_1$ contains all the components of $\bbeta$ except the first component corresponding to the intercept. Under $H_0(\bX)$, the individual treatment effects have no linear dependance on $\bX.$

We then construct a Wald-type test for $H_0(\bX)$ using an estimator $ \widehat{\bbeta}$ and its covariance estimator $\widehat{\text{cov}}(   \widehat{\bbeta} )$; it could be $\widehat{\bbeta}_\RI$ or $\widehat{\bbeta}_\OLS.$ Let $ \widehat{\bbeta}_{1}$ and $\widehat{\text{cov}}(   \widehat{\bbeta}_{1} )$ denote the sub-vector of $ \widehat{\bbeta}$ and sub-matrix of $\widehat{\text{cov}}(   \widehat{\bbeta} )$, corresponding to the non-intercept coordinates of $\bX$. We reject when
\begin{eqnarray}\label{eq::test_sys}
 \widehat{\bbeta}_{1}^\T 
\widehat{\text{cov}} ^{-1} (   \widehat{\bbeta}_{1} ) 
 \widehat{\bbeta}_{1} 
 > q_{K-1}(1-\alpha),
\end{eqnarray} 
where $q_{K-1}(1-\alpha)$ is the $1-\alpha$ quantile of the $\chi^2$ random variable with degrees of freedom $K-1$.

The test in \eqref{eq::test_sys} is nearly identical to the test proposed by~\citet{crump::2008}. They relax the parametric assumption by taking a ``sieve estimator'' approach, namely by using a quadratic form of the regression function, which allows for more flexible marginal distributions. Our approach differs in that we avoid modeling the marginal distributions entirely. 
If desired, we can add polynomials of $\bX$ (or other basis functions) into the model for $\delta$ to allow for more flexible systematic treatment effect variation, which could enhance power or model more complex relationships between the $bX$ and treatment impact.

\subsection{Additional considerations}\label{sec::additional}
In the Supplementary Material, we describe two additional points about systematic treatment effect variation that we briefly address here.
First, as mentioned above, we can use model-assisted estimation to improve the randomization-based estimator. 
In particular, improving estimation of $\widehat{\bS}_{xt}$ directly improves $\widehat{\bbeta}_{\text{RI}}$, as the $\widehat{\bS}_{xt}$ are the only random components. 
In particular, if we replace the standard sample estimator, $\widehat{\bS}_{xt}$, by a more efficient, model-assisted estimator, as in survey sampling~\citep{cochran::1977, sarndal2003model}, we can achieve meaningful precision gains in practice. 
More importantly, this setup allows researchers to assess systematic variation across one set of covariates while adjusting for another set.

Second, under the assumption of no idiosyncratic variation (i.e., $\varepsilon_i = 0$ for all $i$), we can obtain exact inference for $\bbeta$ by inverting a sequence of randomization-based tests. This complements previous work on randomization-based tests for the presence of idiosyncratic treatment effect variation~\citep{ding::2015jrssb}.

\section{Idiosyncratic treatment effect variation for ITT\label{sec:idio_var}}
After characterizing the systematic component of treatment effect variation, we now turn to characterizing the idiosyncratic component. Since this quantity is inherently unidentifiable, we propose sharp bounds on this component and a framework for sensitivity analysis. We then leverage these results to bound an $R^2$-like measure of the treatment effect variation explained by covariates.

\subsection{Bounds}\label{sec::bounds}
We first define the main quantities of interest:
$$
S_{\tau\tau} = \frac{1}{n} \sum_{i=1}^n (\tau_i -  \tau )^2 , \quad
S_{\delta\delta} =  \frac{1}{n} \sum_{i=1}^n(\delta_i -  \tau  )^2 , \quad 
S_{\varepsilon\varepsilon} = \frac{1}{n}  \sum_{i=1}^n \varepsilon_i^2  ,
$$
with  $\delta_i$ and $\varepsilon_i$ defined as in (\ref{eq::main-decomposition}). Then $S_{\tau\tau} = S_{\delta\delta} + S_{\varepsilon\varepsilon}$. We can immediately estimate $S_{\delta\delta}$ via the sample variance of $\{ \widehat{\delta}_i = \bX_i ^\T \widehat{\bbeta} \}_{i=1}^n $, where $\widehat{\bbeta}$ is a consistent estimator, e.g., $\widehat{\bbeta}_{\RI}$ or $\widehat{\bbeta}_{\OLS}$. 
However, the idiosyncratic variance, $S_{\varepsilon\varepsilon}$, is inherently unidentifiable because it depends on the joint distribution of potential outcomes.

We can, however, derive sharp bounds for $S_{\varepsilon\varepsilon}$. Let $F_1(y)$ and $F_0(y)$ be the empirical cumulative distribution functions of $\{ e_i(1)    \}_{i=1}^n$ and $\{ e_i(0)  \}_{i=1}^n$. 
Below we denote $e(t)$ as a random variable taking equal probabilities on $n$ values of $\{ e_i(t)\}_{i=1}^n.$
Based on the Fr\'echet--Hoeffding bounds \citep{hoeffding::1941, frechet::1951, nelsen::2007}, we can bound $S_{\varepsilon\varepsilon}$ as follows. 

\begin{theorem}
\label{thm::bounds_for_s_tau_tau}
$S_{\varepsilon\varepsilon}$ has sharp bounds
$
  \underline{S}_{\varepsilon\varepsilon} \leq S_{\varepsilon\varepsilon} \leq  \overline{S}_{\varepsilon\varepsilon},
$
where
\begin{eqnarray*}
\underline{S}_{\varepsilon\varepsilon} =
\int_0^1 \{  F_1^{-1}(u)   -  F_0^{-1}(u)     \}^2 du,
\quad 
\overline{S}_{\varepsilon\varepsilon} =
\int_0^1 \{   F_1^{-1}(u)   -  F_0^{-1}(1-u)     \}^2 du
\end{eqnarray*}
with $F^{-1}(u) = \inf\{ x: F(x) \geq u \}$ as the quantile function. 
The lower and upper bounds are attainable when $e(1)$ and $e(0)$ have the same ranks and opposite ranks, respectively.
\end{theorem}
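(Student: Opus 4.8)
The plan is to reduce the problem to optimizing the cross-moment of the residual potential outcomes over all couplings consistent with the observed marginals, and then invoke the Fr\'echet--Hoeffding / rearrangement inequality. First I would record that, because $\bX_i$ contains a constant, both residual sequences have mean zero: $n^{-1}\sum_i e_i(1) = n^{-1}\sum_i e_i(0) = 0$, so $\bar\varepsilon = 0$ and hence $S_{\varepsilon\varepsilon} = n^{-1}\sum_i \{e_i(1) - e_i(0)\}^2$. Expanding the square gives
\[
S_{\varepsilon\varepsilon} = \frac{1}{n}\sum_i e_i(1)^2 + \frac{1}{n}\sum_i e_i(0)^2 - \frac{2}{n}\sum_i e_i(1)e_i(0).
\]
The first two terms depend only on $F_1$ and $F_0$ and are therefore fixed by the marginal distributions of the residuals in each arm. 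Only the cross term $n^{-1}\sum_i e_i(1)e_i(0)$ depends on how the two potential-outcome residuals are paired within a unit, and this pairing is exactly the part of the joint distribution that is never observed. Thus minimizing (respectively, maximizing) $S_{\varepsilon\varepsilon}$ is equivalent to maximizing (respectively, minimizing) the cross term over all joint laws with marginals $F_1$ and $F_0$.

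Next I would invoke the rearrangement / Fr\'echet--Hoeffding inequality. Since both $e(1)$ and $e(0)$ are uniform on $n$ atoms, any admissible coupling is represented by a doubly stochastic matrix, whose extreme points are permutations (Birkhoff); as the cross-objective is linear in this matrix, its extrema are attained at permutations. The Hardy--Littlewood rearrangement inequality then says that $\sum_i e_{(i)}(1)\, e_{\sigma(i)}(0)$ is maximized when the two sorted sequences are paired in the same order (the comonotone coupling) and minimized when paired in opposite order (the countermonotone coupling). In distributional terms, $E\{e(1)e(0)\}$ is maximized by $\{F_1^{-1}(U), F_0^{-1}(U)\}$ and minimized by $\{F_1^{-1}(U), F_0^{-1}(1-U)\}$ with $U\sim\Unif(0,1)$.

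Finally, I would translate these optimal couplings into the stated integrals. Under the comonotone coupling, $S_{\varepsilon\varepsilon}$ attains its minimum $\int_0^1 \{F_1^{-1}(u) - F_0^{-1}(u)\}^2\,du$; since $F_1^{-1}$ and $F_0^{-1}$ are step functions constant on each interval $((i-1)/n, i/n]$, this integral equals $n^{-1}\sum_i \{e_{(i)}(1) - e_{(i)}(0)\}^2$, the same-rank pairing. Under the countermonotone coupling it attains its maximum $\int_0^1\{F_1^{-1}(u)-F_0^{-1}(1-u)\}^2\,du = n^{-1}\sum_i\{e_{(i)}(1) - e_{(n+1-i)}(0)\}^2$, the opposite-rank pairing. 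Sharpness follows because each extreme coupling is realized by an explicit permutation of the units that leaves both marginals $F_1$ and $F_0$ unchanged, and is hence a legitimate candidate for the unidentified joint law.

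The main obstacle I anticipate is the sharpness/attainability argument rather than the inequality itself. I need to argue carefully that every coupling of the marginals is admissible (the identifiability claim underlying the whole bound), that the continuous Fr\'echet--Hoeffding bound coincides with the finite permutation optimum (handled via Birkhoff together with the rearrangement inequality), and that ties among the residual values do not break the quantile-function representation $F^{-1}(u)=\inf\{x:F(x)\ge u\}$. These points are routine but deserve explicit care, since they are what upgrade the two candidate couplings from merely feasible to genuinely optimal and attaining.
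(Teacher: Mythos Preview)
Your proposal is correct and follows essentially the same route as the paper: both note that $\bar\varepsilon=0$, expand $S_{\varepsilon\varepsilon}$ into two marginal second moments plus a cross term, and then bound the cross term by the Fr\'echet--Hoeffding (rearrangement) inequality, with the comonotone and countermonotone couplings giving the sharp extremes. The paper packages the cross-moment bound as a cited lemma, whereas you unpack it via Birkhoff plus Hardy--Littlewood and the step-function form of the quantile functions, but the argument is the same.
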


The lower bound of $S_{\varepsilon\varepsilon}$ corresponds to a rank-preserving relationship between $e(1)$ and $e(0)$, and the upper bound of $S_{\varepsilon\varepsilon}$ corresponds to an anti-rank-preserving relationship between $e(1)$ and $e(0)$. Equivalently, they correspond to the cases where the Spearman rank correlation coefficients between $e(1)$ and $e(0)$ are $+1$ and $-1.$

In practice, we can often sharpen these bounds because we are unlikely to have negatively associated potential outcomes after adjusting for covariates. If we assume a nonnegative correlation between $e(1)$ and $e(0)$, we have the following corollary:

\begin{corollary}\label{coro::independent}
If the correlation between $e(1)$ and $e(0)$ is nonnegative, then the bounds for $S_{\varepsilon\varepsilon}$ become
$\underline{S}_{\varepsilon\varepsilon} \leq S_{\varepsilon\varepsilon} \leq V_1 + V_0$, where
$
V_t 
$
is the variance of $e(t)$ for $t=0,1.$
\end{corollary}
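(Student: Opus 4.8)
The plan is to reduce the entire statement to a single variance identity and then exploit the sign constraint on the covariance. First I would record that, because each $\bX_i$ contains a constant as its first component and $\bgamma_t = \bS_{xx}^{-1}\bS_{xt}$ solves the least-squares normal equations $n^{-1}\sum_i \bX_i\{Y_i(t) - \bX_i^\T\bgamma_t\} = \bzero$, the residuals satisfy $n^{-1}\sum_i e_i(t) = 0$ for $t=0,1$. Hence $e(1)$, $e(0)$, and $\varepsilon = e(1)-e(0)$ all have finite-population mean zero, so $V_t = \var\{e(t)\} = n^{-1}\sum_i e_i(t)^2$ and $S_{\varepsilon\varepsilon} = \var(\varepsilon)$. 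This yields the exact identity
\begin{equation*}
S_{\varepsilon\varepsilon} = V_1 + V_0 - 2\,\cov\{e(1), e(0)\},
\end{equation*}
which exhibits $S_{\varepsilon\varepsilon}$ as an affine, strictly decreasing function of the unidentified covariance $\cov\{e(1),e(0)\}$, with the marginals $F_1,F_0$—and therefore $V_1,V_0$—held fixed across all admissible couplings.

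Given this identity the upper bound is immediate. The hypothesis that the correlation between $e(1)$ and $e(0)$ is nonnegative is equivalent to $\cov\{e(1),e(0)\}\ge 0$, and the display then gives $S_{\varepsilon\varepsilon}\le V_1+V_0$ at once.

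For the lower bound I would argue that the sign constraint is inactive, so nothing changes relative to Theorem~\ref{thm::bounds_for_s_tau_tau}. Minimizing $S_{\varepsilon\varepsilon}$ is equivalent to maximizing $\cov\{e(1),e(0)\}$ over couplings of $F_1$ and $F_0$, and by Fr\'echet--Hoeffding the maximizer is the comonotone (same-rank) coupling, which is exactly the configuration Theorem~\ref{thm::bounds_for_s_tau_tau} identifies as attaining $\underline{S}_{\varepsilon\varepsilon}$. Writing $e(1)$ and $e(0)$ as nondecreasing functions of a common uniform variable under this coupling, the one-dimensional association (Chebyshev sum) inequality shows the comonotone covariance is nonnegative; hence the comonotone coupling already lies inside the feasible set cut out by the constraint, and the restricted minimum of $S_{\varepsilon\varepsilon}$ remains $\underline{S}_{\varepsilon\varepsilon}$.

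The step I expect to require the most care is sharpness of the new upper bound rather than the inequality itself. To show $V_1+V_0$ is attainable I would exhibit a coupling of $F_1$ and $F_0$ with $\cov\{e(1),e(0)\}=0$: since the comonotone coupling has nonnegative covariance and the antitone coupling has nonpositive covariance, and the set of couplings is convex with covariance an affine (hence continuous) functional of the coupling, an intermediate mixture attains covariance exactly zero, giving zero correlation—which is nonnegative, hence feasible—and $S_{\varepsilon\varepsilon}=V_1+V_0$. The subtlety is that this must be realized by a genuine joint law with the fixed empirical marginals rather than merely for continuous distributions; a Birkhoff-type argument over doubly stochastic couplings of the two $n$-point uniform marginals is what I would use to close that gap.
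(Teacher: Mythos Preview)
Your argument is correct and matches the paper's implicit reasoning: the paper does not supply a separate proof of this corollary, treating it as immediate from the variance identity $S_{\varepsilon\varepsilon}=V_1+V_0-2\cov\{e(1),e(0)\}$ together with the sign assumption, exactly as you do. Your check that the comonotone coupling already satisfies the nonnegative-correlation constraint (so $\underline{S}_{\varepsilon\varepsilon}$ is unchanged) is the right way to handle the lower bound, and the paper's own attainability claim for the upper bound appears later in Section~\ref{sec::sensitivity-analysis}, where it notes that independence $U_1\indep U_0$ yields $S_{\varepsilon\varepsilon}=V_1+V_0$; this is a concrete instance of the zero-covariance coupling you construct by convex interpolation, so your sharpness discussion is in fact more detailed than what the paper provides.
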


We can consistently estimate each quantity: $S_{\delta \delta}$ by the sample variance of $\bX_i^\T \widehat{\bbeta}$, $F_{e1}(y)$ and $F_{e0}(y)$ by $\widehat{F}_1(y)$ and $\widehat{F}_0(y)$, the empirical cumulative distribution functions of the residuals $\widehat{e}_i$ under treatment and control, and $V_1$ and $V_0$ by the variances of $\widehat{e}(1)$ and $\widehat{e}(0)$.

\paragraph{Variance of the overall ITT estimator.} We can use these results to obtain sharper bounds on the variance of~\citet{neyman::1923}'s estimate of overall ITT, $\widehat{\tau} = n_1^{-1}\sumn T_iY_i^\obs - n_0^{-1}\sumn (1-T_i)Y_i^\obs$, extending previous work by~\citet{heckman::1997} and~\citet{aronow::2014}. See also~\citet{fogarty2016regression}. Applying the results in Section~\ref{sec::framework} for scalar outcomes, we have the following variance for the difference-in-means estimator,
$$
\var(\widehat{\tau}) = \frac{S_{11}}{n_1} + \frac{S_{00}}{n_0} - \left(  \frac{S_{\delta\delta} }{n}  + \frac{S_{\varepsilon\varepsilon} }{n} \right),
$$
where $S_{\tau\tau} = S_{\delta\delta} + S_{\varepsilon\varepsilon}$. As we discuss above,~\citet{neyman::1923} proposed a lower bound for the overall $\var(\widehat{\tau})$ under the assumption of a constant treatment effect, $S_{\tau\tau}=0$. More recently,~\citet{aronow::2014} instead proposed to bound $S_{\tau\tau}$ via Fr\'echet--Hoeffding bounds. We can modestly improve these results by applying Fr\'echet--Hoeffding bounds for $S_{\varepsilon\varepsilon}$ alone rather than for $S_{\tau\tau} = S_{\delta\delta} + S_{\varepsilon\varepsilon}$. So long as $S_{\delta\delta} > 0$, this yields strictly tighter bounds on $\var(\widehat{\tau})$ than the corresponding bounds that do not incorporate covariate information. In turn, this gives a tighter estimate of the standard error for the same difference-in-means estimator, $\widehat{\tau}$. 

\paragraph{A variance ratio test.} Finally, while the relationship between $e(0)$ and $e(1)$ is inherently unidentifiable, there is some information in the data about the relationship between $\varepsilon_i$, the individual-level idiosyncratic treatment effect, and $Y_i(0)$, the control potential outcome. In particular,~\citet{raudenbush::2015} noted that if the variance of the treatment potential outcomes is smaller than the variance of the control potential outcomes, then the treatment effect must be negatively associated with the control potential outcomes. In the Supplementary Material, we extend this result to incorporate covariates and propose a formal test.

\subsection{Sensitivity analysis}\label{sec::sensitivity-analysis}
Going beyond worst-case bounds, we can assess the sensitivity of our estimate of $S_{\varepsilon\varepsilon}$ to different assumptions of the dependence between potential outcomes. Using the probability integral transformation, we represent the residual potential outcomes as 
$$
e (1) = F^{-1}_1(U_1), \quad e (0) =  F^{-1}_0(U_0),\quad U_1, U_0 \sim \text{Uniform}(0,1),
$$
Therefore, the dependence of the potential outcomes is determined by the dependence of the uniform random variables $U_1$ and $U_0$, which are the standardized ranks of the potential outcomes. When $U_1 = U_0$, $S_{\varepsilon\varepsilon}$ attains the lower bound $\underline{S}_{\varepsilon\varepsilon}$; when $U_1 = 1 - U_0$, $S_{\varepsilon\varepsilon}$ attains the upper bound $\overline{S}_{\varepsilon\varepsilon}$; when $U_1\indep U_0$, $S_{\varepsilon\varepsilon}$ attains the improved upper bound $V_1 + V_0$. 

Rather than simply examine extreme scenarios of $S_{\varepsilon\varepsilon}$, we can instead represent $U_1$ as a mixture of $U_0$ and another independent uniform random variable $V_0:$
\begin{equation}
U_1 \sim \rho U_0 + (1- \rho) V_0,\quad U_0, V_0 \stackrel{\text{i.i.d.}}{\sim} \text{Uniform}(0,1),	 \label{eq::copula}
\end{equation}
which the sensitivity parameter $\rho$ captures the association between $U_1$ and $U_0$. An immediate interpretation of $\rho$ is the proportion of rank preserved units, with the other $1-\rho$ as the proportion of units with independent treatment and control  residual outcomes.
When $\rho = 0$, $U_1\indep U_0$, and the residual potential outcomes are independent; when $\rho = 1$, $U_1 = U_0$, and the residual potential outcomes have the same ranks. The values between $(0,1)$ corresponds to positive rank correlation but not full rank preservation. 
Note that the representation of the joint distribution is not unique, because we can choose any copula as a joint distribution of $(U_1,U_0)$ \citep{nelsen::2007}.
We choose the above representation and notation $\rho$ for the following theorem.

\begin{theorem}
\label{thm::sensitivity analysis}
If Equation~\ref{eq::copula} holds, then $\rho$ is Spearman's rank correlation coefficient between $e(1)$ and $e(0)$. Furthermore, $S_{\varepsilon\varepsilon}$ is a linear function of $\rho$:
$$
S_{\varepsilon\varepsilon}(\rho) = \rho \underline{S}_{\varepsilon\varepsilon} + (1-\rho) (V_1 + V_0).
$$
\end{theorem}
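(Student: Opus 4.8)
The plan is to reduce both claims to covariance computations under the mixture representation~\eqref{eq::copula}. Since $\bX_i$ contains an intercept, the residuals $e_i(1)$ and $e_i(0)$ are centered, so $E\{e(t)\} = 0$ and $\var\{e(t)\} = V_t$ for $t = 0,1$, where expectations are averages over the joint law of $(e(1), e(0))$ induced by the copula. Because $\varepsilon = e(1) - e(0)$ then also has mean zero, I can write
\[
S_{\varepsilon\varepsilon}(\rho) = \var\{e(1) - e(0)\} = V_1 + V_0 - 2\cov\{e(1), e(0)\},
\]
so everything hinges on evaluating $\cov\{e(1), e(0)\}$ under~\eqref{eq::copula}. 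I would make the mixture explicit by introducing a latent indicator $B \sim \Bern(\rho)$, independent of $(U_0, V_0)$, and setting $U_1 = B U_0 + (1-B) V_0$; a one-line check confirms that $U_1$ is marginally $\text{Uniform}(0,1)$, so that $e(1) = F_1^{-1}(U_1)$ retains its correct marginal $F_1$.

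For the first claim, I would note that Spearman's rank correlation between $e(1)$ and $e(0)$ is by definition the Pearson correlation between the probability-integral transforms $U_1 = F_1\{e(1)\}$ and $U_0 = F_0\{e(0)\}$. Conditioning on $B$ and using the independence of $B, U_0, V_0$ gives $E(U_1 U_0) = \rho E(U_0^2) + (1-\rho) E(V_0) E(U_0) = \rho/3 + (1-\rho)/4$, whence $\cov(U_1, U_0) = \rho/12$. Since both $U_1$ and $U_0$ are uniform with variance $1/12$, the correlation is exactly $\rho$.

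For the second claim, I would again condition on $B$. On the event $\{B = 1\}$ the coupling is comonotone, $U_1 = U_0$, contributing $\int_0^1 F_1^{-1}(u) F_0^{-1}(u)\, du$; on $\{B = 0\}$ the transforms are independent, and because both residual variables are centered this contributes $E\{F_1^{-1}(V_0)\} E\{F_0^{-1}(U_0)\} = 0$. Hence $\cov\{e(1), e(0)\} = \rho \int_0^1 F_1^{-1}(u) F_0^{-1}(u)\, du$. Finally, expanding the square in $\underline{S}_{\varepsilon\varepsilon} = \int_0^1 \{F_1^{-1}(u) - F_0^{-1}(u)\}^2 du$ and using $\int_0^1 \{F_t^{-1}(u)\}^2 du = V_t$ yields $2\int_0^1 F_1^{-1}(u) F_0^{-1}(u)\, du = V_1 + V_0 - \underline{S}_{\varepsilon\varepsilon}$. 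Substituting into the display for $S_{\varepsilon\varepsilon}(\rho)$ collects the terms into $\rho \underline{S}_{\varepsilon\varepsilon} + (1-\rho)(V_1 + V_0)$, as claimed.

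The routine algebra is not where the difficulty lies; the main things to get right are the structural facts that the cross term vanishes in the independent mixture component, which relies crucially on the residuals being mean-zero (guaranteed by the intercept in $\bX_i$), and the identification of Spearman's coefficient with the correlation of the uniform transforms. I would also take care to state the finite-population analogue precisely: the expectations above are averages over the $n$ units under whichever joint arrangement of $\{e_i(1)\}$ and $\{e_i(0)\}$ the copula prescribes, so that $\int_0^1 F_1^{-1}(u) F_0^{-1}(u)\, du$ is exactly the comonotone (same-rank) sum that appears in the lower bound of Theorem~\ref{thm::bounds_for_s_tau_tau}, which also makes transparent the endpoint checks $S_{\varepsilon\varepsilon}(1) = \underline{S}_{\varepsilon\varepsilon}$ and $S_{\varepsilon\varepsilon}(0) = V_1 + V_0$.
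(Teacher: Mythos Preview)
Your proof is correct and follows essentially the same logic as the paper: both exploit the mixture structure of~\eqref{eq::copula} to show that $E\{e(1)e(0)\}$, and hence $S_{\varepsilon\varepsilon}$, is an affine function of $\rho$. The one notable difference is in the first claim: the paper writes down the copula $C(u_1,u_0)=\rho\min(u_1,u_0)+(1-\rho)u_1u_0$ and then invokes Nelsen's formula $\rho_S = 12\int\!\!\int\{C(u_1,u_0)-u_1u_0\}\,du_1\,du_0$ to evaluate the Spearman coefficient, whereas you introduce the latent Bernoulli $B$ and compute $\cov(U_1,U_0)=\rho/12$ directly by conditioning. Your route is more elementary and self-contained; the paper's is shorter once one is willing to cite copula theory. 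For the second claim the two arguments are essentially identical, though you carry the algebra all the way to the displayed formula (using the mean-zero property of the residuals to kill the independent-component cross term), while the paper stops after establishing linearity of the covariance and leaves the final substitution implicit.
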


We cannot extract any information about $\rho$ from the data. We therefore treat $\rho$ as a sensitivity parameter, choose a plausible range of $\rho$, and obtain corresponding values for $S_{\varepsilon\varepsilon}$.

\subsection{Fraction of treatment effect variation explained}
A natural question is the relative magnitudes of $S_{\delta\delta}$ and $S_{\varepsilon\varepsilon}$~\citep{djebbari::2008}. Continuing the regression analogy, this is an $R^2$-like measure for the proportion of total treatment effect variation explained by the systematic component:
$$
R^2_\tau = \frac{S_{\delta\delta}}{S_{\tau\tau}}  
= \frac{     S_{\delta \delta}   }{  S_{\delta\delta} + S_{\varepsilon\varepsilon}},
$$
which is the ratio between the finite population variances of $\delta$ and $\tau.$ As above, we can directly estimate $S_{\delta\delta}$ but must bound $S_{\varepsilon\varepsilon}$. Applying Theorem~\ref{thm::bounds_for_s_tau_tau}, we obtain the following bounds on $R^2_\tau$.

\begin{corollary}\label{coro::bounds-r}
The sharp bounds on $R^2_\tau$ are
$$
\frac{     S_{\delta \delta}   }{  S_{\delta\delta} + \overline{S}_{\varepsilon\varepsilon}} \leq 
R^2_\tau
   \leq 
   \frac{     S_{\delta \delta}   }{  S_{\delta\delta} + \underline{S}_{\varepsilon\varepsilon}}.
$$
If we further assume that the correlation between $e(1)$ and $e(0)$ is nonnegative, the sharp bounds  on $R^2_\tau$ are
$$
\frac{     S_{\delta \delta}   }{  S_{\delta\delta} + V_1 + V_0} \leq 
R^2_\tau
   \leq 
   \frac{     S_{\delta \delta}   }{  S_{\delta\delta} + \underline{S}_{\varepsilon\varepsilon}}.
$$
\end{corollary}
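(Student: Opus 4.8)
The plan is to reduce the corollary to a one-variable monotonicity argument, treating $S_{\delta\delta}$ as a fixed constant and $S_{\varepsilon\varepsilon}$ as the only quantity that varies over admissible joint distributions of the potential outcomes. First I would note that for fixed $S_{\delta\delta} > 0$ the map $f(s) = S_{\delta\delta}/(S_{\delta\delta} + s)$ is continuous and strictly decreasing on $[0,\infty)$, since $f'(s) = -S_{\delta\delta}/(S_{\delta\delta}+s)^2 < 0$. Because $R^2_\tau = f(S_{\varepsilon\varepsilon})$, bounding $R^2_\tau$ amounts to bounding $S_{\varepsilon\varepsilon}$ and then applying $f$, with the direction of each inequality reversed by the monotone-decreasing nature of $f$.

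I would then invoke the sharp bounds $\underline{S}_{\varepsilon\varepsilon} \le S_{\varepsilon\varepsilon} \le \overline{S}_{\varepsilon\varepsilon}$ from Theorem~\ref{thm::bounds_for_s_tau_tau}. Applying $f$ and reversing inequalities gives $f(\overline{S}_{\varepsilon\varepsilon}) \le R^2_\tau \le f(\underline{S}_{\varepsilon\varepsilon})$, which is precisely the first display of the corollary. For the second display I would substitute the improved upper bound $V_1 + V_0$ from Corollary~\ref{coro::independent} for $\overline{S}_{\varepsilon\varepsilon}$, leaving $\underline{S}_{\varepsilon\varepsilon}$ --- and hence the upper bound on $R^2_\tau$ --- unchanged.

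The crux of the sharpness claim, and the one step I would treat carefully, is that $S_{\delta\delta}$ is invariant across all admissible couplings of $e(1)$ and $e(0)$. This holds because $\delta_i = \bX_i^\T \bbeta$ with $\bbeta = \bgamma_1 - \bgamma_0$, where $\bgamma_1$ and $\bgamma_0$ depend only on the marginal cross-moments $\bS_{x1}$, $\bS_{x0}$ and on $\bS_{xx}$, none of which involve the joint law of $(Y(1), Y(0))$. Hence as the coupling varies to trace out the range of $S_{\varepsilon\varepsilon}$, the numerator $S_{\delta\delta}$ stays fixed and $f$ is applied with a genuinely constant numerator. Since the extreme values of $S_{\varepsilon\varepsilon}$ are attained by explicit couplings --- the rank-preserving and anti-rank-preserving arrangements of Theorem~\ref{thm::bounds_for_s_tau_tau}, and the independence coupling of Corollary~\ref{coro::independent} --- the corresponding endpoints of $R^2_\tau$ are attained by those same couplings, and continuity of $f$ fills in the intermediate values; this yields sharpness rather than mere validity. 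The only degenerate case is $S_{\delta\delta} = 0$, where $R^2_\tau \equiv 0$ and both bounds collapse to $0$, so the statement holds trivially. I expect no genuine obstacle beyond this bookkeeping, as the entire argument is a transfer of the already-established sharpness of the $S_{\varepsilon\varepsilon}$ bounds through a fixed monotone transformation.
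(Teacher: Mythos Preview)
Your proposal is correct and matches the paper's approach: the paper does not give a separate proof of this corollary, stating only that it follows by ``Applying Theorem~\ref{thm::bounds_for_s_tau_tau},'' and your argument simply spells out the implicit step---that $R^2_\tau = S_{\delta\delta}/(S_{\delta\delta}+S_{\varepsilon\varepsilon})$ is a fixed, strictly decreasing function of $S_{\varepsilon\varepsilon}$ because $S_{\delta\delta}$ depends only on the marginals---and then transfers the sharp bounds on $S_{\varepsilon\varepsilon}$ from Theorem~\ref{thm::bounds_for_s_tau_tau} and Corollary~\ref{coro::independent}. Your additional care about why $S_{\delta\delta}$ is coupling-invariant and the handling of the degenerate case $S_{\delta\delta}=0$ are more detail than the paper provides, but entirely in the same spirit.
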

We estimate these bounds via plug-in estimates. Note that~\citet{djebbari::2008} explore a similar quantity by using a permutation approach to approximate the Fr\'echet--Hoeffding upper and lower bounds.
Finally, we can use the sensitivity results for ${S}_{\varepsilon\varepsilon}$, with values of $\rho \in [0,1]$:
$$
R^2_\tau(\rho) = \frac{     S_{\delta \delta}   }{  S_{\delta\delta} + {S}_{\varepsilon\varepsilon}(\rho)}.
$$

\section{Noncompliance}
\label{sec::noncompliance}

\subsection{Setup}
We now extend our results to allow for noncompliance. Let $T$ be the indicator of treatment assigned, $D$ be the indicator of treatment received, $Y$ be outcome of interest, and $\bX$ be pretreatment covariates. Under the Stable Unit Treatment Value Assumption, we define $D_i(t)$ and $Y_i(t)$ as the potential outcomes for unit $i$ under treatment assignment $t.$ Following~\citet{angrist::1996} and~\citet{frangakis::2002}, we can classify units into four compliance types based on the joint values of $D_i(1)$ and $D_i(0)$:
	$$
	U_i = \begin{cases} \text{Always Taker}~(a) & \mbox { if } D_i(1) = 1, D_i(0) = 1, \\
	 \text{Never Taker}~(n) & \mbox { if } D_i(1) = 0, D_i(0) = 0, \\
	 \text{Complier}~(c) & \mbox { if } D_i(1) = 1, D_i(0) = 0, \\
	 \text{Defier}~(d) & \mbox { if } D_i(1) = 0, D_i(0) = 1 .	
 \end{cases}
 $$
Denote $n_i$ and $\pi_u$ by the number and proportion of compliance types $\pi_u$ of stratum $U=u$ for $u=a,n,c,d$.

Throughout our discussion, we invoke the following assumptions which are commonly used for analyzing randomized experiments with noncompliance. 
\begin{assumption}\label{ass::iv}
(i) Monotonicity: $D_i(1) \geq D_i(0)$; (ii) Exclusion restrictions for Always Takers and Never Takers: $Y_i(1)= Y_i(0)$ for all units with $D_i(1) = D_i(0)$; (iii) Strong instrument: $\pi_c >  C_0 >0$, where $C_0$ is a positive constant independent of the sample size. 
\end{assumption}

Monotonicity rules out the existence of Defiers, i.e., $\pi_d = 0$. Under monotonicity, we can estimate the proportion $\pi_u$ using the observed counts of units classified by $T$ and $D$: let $n_{td} = \#\{ i:  T_i = t, D_i = d  \}$, and then $\widehat{\pi}_n= n_{10}/n_1$, $\widehat{\pi}_a = n_{01}/n_0$,  and $\widehat{\pi}_c = n_{11}/n_1 - n_{01}/n_0$. The exclusion restrictions assume that treatment assignment has no effect on the outcome for Always Takers and Never Takers. As a result, treatment effect variation is trivially zero for Always Takers and Never Takers. Note that this is the unit-level exclusion restriction imposed in~\citet{angrist::1996}. This can be relaxed in other settings; for example, we could assume the impact of randomization for these groups is zero on average~\citep[see][]{imbens::2015}. Finally, to avoid technical complexity, we rule out the weak instrument case \citep{bound1995problems, staiger1997instrumental}, i.e., $\pi_c$ is within a small neighborhood of $0$ with radius shrinking to $0.$

We are interested in treatment effect variation among Compliers, which motivates the following decomposition:
\begin{eqnarray}
\label{eq::decomposition-noncompliance}
\tau_ i = Y_i(1) - Y_i(0) = \left\{
                \begin{array}{ll}
                  0, &\text{if }  U_i = a \text{ or } n,\\
                  \bX_i^\T \bbeta_c + \varepsilon_i, &\text{if } U_i=c,
                \end{array}
              \right.
\end{eqnarray}
where $\bbeta_c$ is the regression coefficient of $\tau_i$ on $\bX_i$ among Compliers, analogous to~\eqref{eq::main-decomposition}.

\subsection{Systematic treatment effect variation among Compliers}
\label{sec::noncompliance-systematic}
\subsubsection{Randomization inference}
We now extend the results of Section~\ref{sec::systematic} to estimate systematic treatment effect variation among Compliers. Define 
$$
\bS_{xx,u} = \frac{1}{n_u} \sum_{i=1}^n I_{(U_i = u)}  \bX_i \bX_i^\T,\quad
\bS_{xt,u} = \frac{1}{ n_u  } \sum_{i=1}^n  I_{(U_i = u)} Y_i(t) \bX_i , \quad (t=0,1; u = a, c, n) .
$$
%for $u = a, c, n$.
Then, analogous to (\ref{eq:population_beta}),  
\begin{eqnarray}
\label{eq::population_beta_noncompliance}
\bbeta_c = \bS_{xx,c}^{-1}  ( \bS_{x1, c} - \bS_{x0, c} )  =  \bS_{xx,c}^{-1}  \bS_{x1, c} - \bS_{xx,c}^{-1} \bS_{x0, c}  \equiv \bgamma_{1c} - \bgamma_{0c} ,
\end{eqnarray}
where 
$$
\bgamma_{1c} = \bS_{xx,c}^{-1}  \bS_{x1, c},\quad
\bgamma_{0c} = \bS_{xx,c}^{-1}  \bS_{x0, c}
$$
are the linear regression coefficients of $Y(1)$ and $Y(0)$ on covariates among Compliers.

Unlike in the ITT case, we cannot estimate these quantities directly. Instead, following standard results from noncompliance~\citep[e.g.,][]{angrist::1996, abadie::2003, angrist::2008}, we use estimates from observed subgroups to estimate the desired quantities of interest. Define sample moments:
\begin{eqnarray}
\widehat{\bS}_{xx,td} = \frac{1}{n_t} \sum_{i=1}^n I_{(T_i=t)} I_{(D_i=d)} \bX_i \bX_i^\T ,\quad
\widehat{\bS}_{xt, td} = \frac{1}{n_t} \sum_{i=1}^n I_{(T_i=t)} I_{(D_i=d)} Y_i^\obs \bX_i  \quad (t,d=0,1).
\label{eq::sample-quantities}
\end{eqnarray}
The following theorem connects these quantities with the finite population quantities in (\ref{eq::population_beta_noncompliance}). 

\begin{theorem}
\label{thm::randomization-inference-noncompliance}
Over all possible randomizations of a completely randomized experiment, both $\widehat{\bS}_{xx}(1) = \widehat{\bS}_{xx,11} - \widehat{\bS}_{xx,01}  $ and $\widehat{\bS}_{xx}(0) =  \widehat{\bS}_{xx,00} - \widehat{\bS}_{xx,10} $ are unbiased for $ \pi_c \bS_{xx,c}$, and
\begin{eqnarray}
E( \widehat{\bS}_{x1,11} - \widehat{\bS}_{x0,01}     )  =   \pi_c \bS_{x1,c},\quad
E( \widehat{\bS}_{x0,00} - \widehat{\bS}_{x1,10}     )  =   \pi_c \bS_{x0,c} .
\label{eq::unbiased-xy}
\end{eqnarray}
\end{theorem}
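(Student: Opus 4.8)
The plan is to exploit monotonicity to determine which compliance strata populate each observed $(T,D)$ cell, and then take expectations over the assignment $\bT$ alone, since the strata labels $U_i$, the covariates $\bX_i$, and the potential outcomes $Y_i(t)$ are all fixed finite-population quantities. This is a finite-sample (exact) unbiasedness claim, so no central limit theorem is needed; everything reduces to linearity of expectation.

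First I would rewrite each sample moment in terms of compliance types. Because $D_i = D_i(T_i)$ and $D_i(t)$ is a deterministic function of $U_i$, under monotonicity (Assumption~\ref{ass::iv}(i), which rules out Defiers) the cell indicators factor as $I_{(T_i=1)}I_{(D_i=1)} = I_{(T_i=1)}I_{(U_i\in\{a,c\})}$, $\ I_{(T_i=0)}I_{(D_i=1)} = I_{(T_i=0)}I_{(U_i=a)}$, $\ I_{(T_i=1)}I_{(D_i=0)} = I_{(T_i=1)}I_{(U_i=n)}$, and $I_{(T_i=0)}I_{(D_i=0)} = I_{(T_i=0)}I_{(U_i\in\{n,c\})}$. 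Thus each cell indicator is a random treatment indicator times a fixed stratum indicator.

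Second, I would take expectations using the fact that under complete randomization $E\{I_{(T_i=t)}\} = n_t/n$ marginally, so the $1/n_t$ normalization cancels and each sample moment collapses to a finite-population average over the relevant strata. For the covariate moments this gives $E(\widehat{\bS}_{xx,11}) = \pi_a\bS_{xx,a} + \pi_c\bS_{xx,c}$ and $E(\widehat{\bS}_{xx,01}) = \pi_a\bS_{xx,a}$; subtracting cancels the Always-Taker contribution and yields $\pi_c\bS_{xx,c}$. The companion identity for $\widehat{\bS}_{xx}(0)$ is symmetric, canceling the Never-Taker contribution instead. For the outcome moments I would use $Y_i^\obs = Y_i(t)$ on cell $(t,\cdot)$ to obtain $E(\widehat{\bS}_{x1,11}) = \pi_a\bS_{x1,a} + \pi_c\bS_{x1,c}$, $\ E(\widehat{\bS}_{x0,01}) = \pi_a\bS_{x0,a}$, and analogously for the second pair.

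The key remaining step, and the one place where an additional assumption enters, is canceling the Always-Taker (resp. Never-Taker) outcome terms. Here I would invoke the exclusion restriction (Assumption~\ref{ass::iv}(ii)): since $Y_i(1) = Y_i(0)$ for every Always Taker, we have $\bS_{x1,a} = \bS_{x0,a}$, so $\pi_a\bS_{x1,a} - \pi_a\bS_{x0,a} = \bzero$ and $E(\widehat{\bS}_{x1,11} - \widehat{\bS}_{x0,01}) = \pi_c\bS_{x1,c}$; the second identity follows identically from $Y_i(1)=Y_i(0)$ for Never Takers. I expect the only genuine subtlety to be the bookkeeping in the first step, namely correctly matching strata to cells under monotonicity and recognizing that the exclusion restriction, which is \emph{not} needed for the purely covariate moments, is exactly what makes the outcome subtractions telescope.
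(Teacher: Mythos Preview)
Your proposal is correct and follows essentially the same argument as the paper: decompose each observed $(T,D)$ cell into compliance strata via monotonicity, take expectations using $E\{I_{(T_i=t)}\}=n_t/n$, and subtract so that Always-Taker (resp.\ Never-Taker) contributions cancel, invoking the exclusion restriction only for the outcome moments. The paper's only cosmetic difference is that it encodes the exclusion restriction by introducing a single symbol $\bS_{x.,u}$ for $u\in\{a,n\}$ rather than writing $\bS_{x1,a}=\bS_{x0,a}$, but the logic is identical.
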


This theorem shows that we can obtain unbiased estimates for all terms in~\eqref{eq::population_beta_noncompliance}. 
The following corollary shows that we can then obtain consistent estimates for $\bgamma_{1c}$, $\bgamma_{0c}$, and $\bbeta_c$, recalling that in the asymptotic analysis, we need to embed $\{ \bX_i, Y_i(1), Y_i(0), D_i(1), D_i(0) \}_{i=1}^n$ into a hypothetical sequence of finite populations under Condition 1.

\begin{corollary}\label{coro::ri-nomcompliance}
$
\widehat{\bgamma}_{1c,\RI} = \widehat{\bS} ^{-1}_{xx}(1)   ( \widehat{\bS}_{x1,11} - \widehat{\bS}_{x0,01}     )
$
and
$
\widehat{\bgamma}_{0c, \RI} = \widehat{\bS} ^{-1}_{xx}(0)  ( \widehat{\bS}_{x0,00} - \widehat{\bS}_{x1,10}        )
$
are consistent for $\bgamma_{1c}$ and $\bgamma_{0c}$. 
Furthermore, $\widehat{\bbeta}_{c,\RI} = \widehat{\bgamma}_{1c,\RI} - \widehat{\bgamma}_{0c,\RI}$ is consistent for $\bbeta_c$ and follows an asymptotic normal distribution with covariance matrix
\begin{eqnarray}\label{eq::asym-cov-noncompliance}
\cov(\widehat{\bbeta}_{c,\RI}) = 
( \pi_c \bS_{xx,c} )^{-1}
\left[  
 \frac{  \S\{  e'(1)  \bX   \} }{ n_1} +\frac{  \S\{   e'(0) \bX   \} } { n_0  } -  \frac{ \S(  \varepsilon \bX ) } { n}
\right]
( \pi_c \bS_{xx,c} )^{-1},
\end{eqnarray}
where we define the residual potential outcomes to be:
\begin{eqnarray}\label{eq::potential-residual-noncompliance}
e'_i(1) = \left\{
                \begin{array}{l}
                  Y_i(1) - \bX_i^\T \bgamma_{1c},\\
                  Y_i(1) - \bX_i^\T \bgamma_{0c},\\
                  Y_i(1) - \bX_i^\T \bgamma_{1c},
                \end{array}
              \right.
\quad
e'_i(0) = \left\{
                \begin{array}{lll}
                  Y_i(0) - \bX_i^\T \bgamma_{1c},&& U_i=a,\\
                  Y_i(0) - \bX_i^\T \bgamma_{0c},&&U_i=n,\\
                  Y_i(0) - \bX_i^\T \bgamma_{0c},&& U_i=c.
                \end{array}
             \right.
\end{eqnarray}
\end{corollary}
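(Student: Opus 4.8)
The plan is to handle the two claims—consistency and asymptotic normality—separately. Consistency follows almost immediately from Theorem~\ref{thm::randomization-inference-noncompliance} plus a finite-population law of large numbers, and asymptotic normality follows by linearizing the two matrix-inverse (ratio-type) estimators and recognizing that the leading term collapses to a vector difference-in-means of the form covered by Theorem~\ref{thm::general-neyman}.

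First, for consistency, I would observe that each of $\widehat{\bS}_{xx}(1)$, $\widehat{\bS}_{xx}(0)$, $\widehat{\bS}_{x1,11}-\widehat{\bS}_{x0,01}$, and $\widehat{\bS}_{x0,00}-\widehat{\bS}_{x1,10}$ is an average of a fixed unit-level quantity over a randomly chosen subgroup, so its sampling variance vanishes as $n\to\infty$ under Condition~1. Combined with the unbiasedness statements of Theorem~\ref{thm::randomization-inference-noncompliance}, this gives $\widehat{\bS}_{xx}(1),\widehat{\bS}_{xx}(0)\stackrel{p}{\rightarrow}\pi_c\bS_{xx,c}$, $\widehat{\bS}_{x1,11}-\widehat{\bS}_{x0,01}\stackrel{p}{\rightarrow}\pi_c\bS_{x1,c}$, and $\widehat{\bS}_{x0,00}-\widehat{\bS}_{x1,10}\stackrel{p}{\rightarrow}\pi_c\bS_{x0,c}$. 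The strong-instrument part of Assumption~\ref{ass::iv} keeps $\pi_c$ bounded away from zero, so $\pi_c\bS_{xx,c}$ is invertible in the limit; by the continuous mapping theorem the common factor $\pi_c$ cancels between numerator and denominator, yielding $\widehat{\bgamma}_{1c,\RI}\stackrel{p}{\rightarrow}\bgamma_{1c}$, $\widehat{\bgamma}_{0c,\RI}\stackrel{p}{\rightarrow}\bgamma_{0c}$, and hence $\widehat{\bbeta}_{c,\RI}\stackrel{p}{\rightarrow}\bbeta_c$.

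For asymptotic normality, I would treat $\widehat{\bbeta}_{c,\RI}$ as a smooth function of the stacked sample moments in~\eqref{eq::sample-quantities}. The finite-population central limit theorem of \citet{li2017general}, invoked through Condition~1, gives joint asymptotic normality of $\sqrt{n}$ times the deviations of these moments from their expectations, and the delta method linearizes the inverses. Using $(\pi_c\bS_{xx,c}+\bDelta)^{-1}=(\pi_c\bS_{xx,c})^{-1}-(\pi_c\bS_{xx,c})^{-1}\bDelta(\pi_c\bS_{xx,c})^{-1}+\ldots$ and collecting terms, the leading contribution to $\widehat{\bgamma}_{tc,\RI}-\bgamma_{tc}$ becomes $(\pi_c\bS_{xx,c})^{-1}$ times a \emph{residualized} moment in which $Y_i^\obs$ is replaced by $Y_i^\obs-\bX_i^\T\bgamma_{tc}$. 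Monotonicity identifies the four $(T,D)$ cells with compliance strata ($T{=}1,D{=}1$ with $\{a,c\}$; $T{=}0,D{=}1$ with $a$; $T{=}0,D{=}0$ with $\{n,c\}$; $T{=}1,D{=}0$ with $n$), and the exclusion restrictions give $Y_i(1)=Y_i(0)$ on the Always-Taker and Never-Taker cells; together these identify the residualized cell-averages with exactly the quantities $e'_i(1)\bX_i$ and $e'_i(0)\bX_i$ of~\eqref{eq::potential-residual-noncompliance}.

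The crucial step is then an algebraic collapse. Forming $\widehat{\bbeta}_{c,\RI}-\bbeta_c=(\widehat{\bgamma}_{1c,\RI}-\bgamma_{1c})-(\widehat{\bgamma}_{0c,\RI}-\bgamma_{0c})$ and combining the four residualized cell-averages, every compliance stratum appears exactly once (Defiers being ruled out), with the treated cells supplying $e'_i(1)$ and the control cells supplying $e'_i(0)$, so the leading linear term reduces to
$$
(\pi_c\bS_{xx,c})^{-1}\left\{ \frac{1}{n_1}\sum_{i} I_{(T_i=1)}\, e'_i(1)\bX_i - \frac{1}{n_0}\sum_{i} I_{(T_i=0)}\, e'_i(0)\bX_i \right\}.
$$
This is $(\pi_c\bS_{xx,c})^{-1}$ times the vector difference-in-means of Theorem~\ref{thm::general-neyman} applied to the outcome $\bV_i(t)=e'_i(t)\bX_i$. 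Since $\bV_i(1)-\bV_i(0)=\{e'_i(1)-e'_i(0)\}\bX_i=\varepsilon_i\bX_i$ (which is $\bzero$ for Always and Never Takers by the exclusion restriction), Theorem~\ref{thm::general-neyman} delivers the bracketed covariance in~\eqref{eq::asym-cov-noncompliance}, and sandwiching by $(\pi_c\bS_{xx,c})^{-1}$ gives the stated formula. I expect the main obstacle to be the linearization bookkeeping: tracking the delta-method expansion through two matrix inverses, verifying that the quadratic remainder is $o_p(n^{-1/2})$ under Condition~1, and checking that the stratum-to-cell identification together with the exclusion restrictions makes the four cell-averages combine into the clean difference-in-means above. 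The joint normality itself is routine once the problem is cast in the \citet{li2017general} finite-population framework, so the real work is the reduction to the vector Neyman form rather than the central limit theorem.
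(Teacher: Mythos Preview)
Your proposal is correct and follows essentially the same route as the paper: replace the random matrix inverses $\widehat{\bS}_{xx}(t)^{-1}$ by their probability limit $(\pi_c\bS_{xx,c})^{-1}$, verify the difference is $o_P(n^{-1/2})$, then perform the stratum-by-stratum algebraic collapse to obtain the vector difference-in-means form $(\pi_c\bS_{xx,c})^{-1}\{n_1^{-1}\sum_i T_i\bX_i e_i'(1)-n_0^{-1}\sum_i(1-T_i)\bX_i e_i'(0)\}$ and apply Theorem~\ref{thm::general-neyman} with the FPCLT. The only cosmetic difference is that the paper carries out the linearization by direct substitution and a separate $O_P(n^{-1/2})\cdot O_P(n^{-1/2})=O_P(n^{-1})$ remainder bound rather than invoking the delta method by name, but the content is identical.
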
 

The idiosyncratic variation is $\varepsilon_i = e_i'(1) - e_i'(0)$ for unit $i$, with $\varepsilon_i = 0$ for Never Takers and Always Takers, and with $\varepsilon_i$ for Compliers as in~\eqref{eq::decomposition-noncompliance}. 
The two sets of residuals are not formed from a regression on all units, but instead the population regression on Compliers alone. As in the ITT case, we can estimate $\S\{  e'(1)  \bX   \}$ and $\S\{  e'(0) \bX   \}$ using their sample analogues; $\S(\varepsilon  \bX  )$, however, is unidentifiable. For units with $D_i = 1$, we define the residual $\widehat{e}'_i = Y_i^\obs - \bX_i^\T \widehat{\bgamma}_{c1,\RI}$, and for units with $D_i=0$, we define the residual $\widehat{e}'_i = Y_i^\obs - \bX_i^\T \widehat{\bgamma}_{c0,\RI}$. Therefore, we can obtain a conservative estimate for the asymptotic covariance \eqref{eq::asym-cov-noncompliance} by the following sandwich form:
$$
\widehat{\cov}(\widehat{\bbeta}_{c,\RI}) = 
\widehat{\bS} ^{-1}_{xx}(1) 
\left[    \frac{  \widehat{\S}_1(\widehat{e}'   \bX  )   }{n_1}         \right]
\widehat{\bS} ^{-1}_{xx}(1) 
+
\widehat{\bS} ^{-1}_{xx}(0) 
\left[   \frac{  \widehat{\S}_0(\widehat{e}'  \bX  )    }{n_0}         \right]
\widehat{\bS} ^{-1}_{xx}(0) .
$$ 
As with the ITT analog, so long as we have Assumption \ref{ass::iv}, randomization itself fully justifies the theorem and estimators without relying on a model of the observed outcomes.

\subsubsection{Two-Stage Least Squares}
We now turn to the standard two-stage least squares (TSLS) setting in econometrics~\citep[e.g.,][]{angrist::2008}. First, we impose a linear regression model with treatment-covariate interactions:
\begin{eqnarray*}
Y_i^\obs = \bX_i^\T   \bgamma + D_i \bX_i^\T  \bbeta + u_i \quad (i=1,\ldots, n). \label{eq::regression-noncompliance}
\end{eqnarray*}
Here, the randomness of the observed outcome comes from the randomness of $D_i$ and $u_i$. In the language of econometrics, the treatment received is ``endogenous,'' i.e., $D_i$ and the error term $u_i$ are assumed to be correlated; we therefore use $T_i$ as an instrument for $D_i$. The TSLS estimates $(\widehat{\bgamma}_{\TSLS}, \widehat{\bbeta}_{\TSLS})$ are the solutions to the following estimating equations:
\begin{equation}
\label{eq::TSLS}
n^{-1} \sum_{i=1}^n 
\begin{pmatrix}
\bX_i\\
T_i \bX_i
\end{pmatrix}
(Y_i^\obs -  \bX_i^\T   \widehat{\bgamma}_\TSLS -  D_i \bX_i^\T  \widehat{\bbeta}_\TSLS   ) = 0.
\end{equation}
This approach is based on $M$-estimation, though there are many other ways to formalize the TSLS estimator~\citep[e.g.,][]{imbens2014instrumental}. 
The following theorem shows that the fully-interacted TSLS estimator $\widehat{\bbeta}_\TSLS$ is consistent for $\bbeta_c$ across randomizations.
\begin{theorem}
\label{thm::TSLS}
Over all randomizations, the TSLS estimator $\widehat{\bbeta}_\TSLS$ follows an asymptotic normal distribution with mean $\bbeta_c$ and covariance matrix
\begin{eqnarray*}
( \pi_c \bS_{xx,c} )^{-1}
\left[
 \frac{  \S\{  e''(1)  \bX    \} }{ n_1} +\frac{  \S\{  e''(0) \bX  \} } { n_0  } -  \frac{ \S(\varepsilon  \bX  ) } { n}
\right]
( \pi_c \bS_{xx,c} )^{-1},
\end{eqnarray*}
where the residual potential outcomes are defined as
\begin{eqnarray*}
e''_i(1) = \left\{
                \begin{array}{l}
                  Y_i(1) - \bX_i^\T (\bgamma_{\infty} + \bbeta_c ) ,\\
                  Y_i(1) - \bX_i^\T \bgamma_{\infty},\\
                  Y_i(1) - \bX_i^\T  (\bgamma_{\infty} + \bbeta_c ),
                \end{array}
              \right.
\quad
e''_i(0) = \left\{
                \begin{array}{lll}
                  Y_i(0) - \bX_i^\T (\bgamma_{\infty} + \bbeta_c ),&& U_i=a,\\
                  Y_i(0) - \bX_i^\T \bgamma_{\infty},&& U_i=n\\
                  Y_i(0) - \bX_i^\T \bgamma_{\infty},&& U_i=c,
                \end{array}
              \right.              
\end{eqnarray*}
where $\bgamma_\infty$ is the probability limit of the TSLS regression coefficient, $\widehat{\bgamma}_\TSLS$, and the idiosyncratic treatment effect is $\varepsilon_i \equiv e_i''(1) - e_i''(0).$
\end{theorem}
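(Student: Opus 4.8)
The plan is to treat $(\widehat{\bgamma}_\TSLS, \widehat{\bbeta}_\TSLS)$ as an $M$-estimator solving the stacked moment equation \eqref{eq::TSLS} and to reduce its $\bbeta$-coordinate to a difference-in-means to which Theorem~\ref{thm::general-neyman} applies. Write $g_i(\bgamma,\bbeta) = (\bX_i^\T, T_i\bX_i^\T)^\T\, r_i$ with residual $r_i = Y_i^\obs - \bX_i^\T\bgamma - D_i\bX_i^\T\bbeta$. Since the only randomness is in $\bT$, I would first identify the probability limit $(\bgamma_\infty, \bbeta_\infty)$ as the solution of $\lim_n E\{ n^{-1}\sumn g_i \} = \bzero$, the expectation being over the randomization; Condition~1 supplies the law of large numbers making $n^{-1}\sumn g_i$ concentrate on this expectation, and Assumption~\ref{ass::iv}(iii) keeps the relevant Gram matrix invertible in the limit. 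The standard linearization $\sqrt{n}(\widehat\theta - \theta_\infty) = -A^{-1}\sqrt{n}\,\overline{g}(\theta_\infty) + o_p(1)$, with $A$ the expected Jacobian, then reduces the problem to the asymptotic behaviour of the score $\overline{g}(\theta_\infty)$.

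For consistency, using $E(T_i) = p_1$ and $T_iD_i = T_iD_i(1)$, the second block gives the treatment-arm normal equation $n^{-1}\sumn\bX_i\{Y_i(1) - \bX_i^\T\bgamma - D_i(1)\bX_i^\T\bbeta\} = \bzero$, and combined with the first block it also yields the control-arm equation with $Y_i(0), D_i(0)$. Subtracting these and invoking monotonicity, so that $D_i(1)-D_i(0) = \ind{U_i = c}$, and the exclusion restrictions, so that $Y_i(1)-Y_i(0)$ is supported on Compliers, collapses the difference to $\pi_c\bS_{xx,c}\bbeta_\infty = \pi_c\bS_{xx,c}\bbeta_c$; hence $\bbeta_\infty = \bbeta_c$. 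The remaining equation then defines $\bgamma_\infty$ implicitly. This algebraic collapse, the exact parallel of Theorem~\ref{thm::randomization-inference-noncompliance}, is the conceptual heart of the argument.

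For the variance, I would first check that $r_i(\bgamma_\infty, \bbeta_c)$ equals $e''_i(1)$ when $T_i = 1$ and $e''_i(0)$ when $T_i = 0$, verifying this case by case in compliance type and confirming $e''_i(1) - e''_i(0) = \varepsilon_i$ (which is $0$ off the Compliers by exclusion). The crucial step is then to eliminate the nuisance $\bgamma$-block of $A$ by a Schur-complement combination: taking the second block of the linearized equation minus $p_1$ times the first and computing the randomization expectations of the Jacobian entries, the coefficient of $\Delta\bbeta$ becomes $p_1 p_0 \pi_c\bS_{xx,c}$ via the identity $n^{-1}\sumn\{D_i(1) - p_1 D_i(1) - p_0 D_i(0)\}\bX_i\bX_i^\T = p_0\pi_c\bS_{xx,c}$, while the score combination is $n^{-1}\sumn (T_i - p_1)\bX_i r_i = p_1 p_0 \{ n_1^{-1}\sumn T_i\bX_i e''_i(1) - n_0^{-1}\sumn (1-T_i)\bX_i e''_i(0)\}$. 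The two factors of $p_1 p_0$ cancel, leaving $\sqrt{n}(\widehat\bbeta_\TSLS - \bbeta_c) \approx (\pi_c\bS_{xx,c})^{-1}\sqrt{n}\{ n_1^{-1}\sumn T_i\bX_i e''_i(1) - n_0^{-1}\sumn(1-T_i)\bX_i e''_i(0)\}$. Applying Theorem~\ref{thm::general-neyman} to the vector outcome $\bX_i e''_i(t)$ gives the difference-in-means covariance $\S\{e''(1)\bX\}/n_1 + \S\{e''(0)\bX\}/n_0 - \S(\varepsilon\bX)/n$, and sandwiching by $(\pi_c\bS_{xx,c})^{-1}$ yields exactly the stated matrix.

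I expect the main obstacle to be the asymptotic justification rather than the algebra: because standard $M$-estimation central limit theorems assume i.i.d.\ sampling, I cannot cite them directly and must instead verify that the linearization remainder is $o_p(1)$ in the randomization framework, using consistency of the sample Jacobian to its randomization expectation (Condition~1) and the strong-instrument Assumption~\ref{ass::iv}(iii) to guarantee that $\pi_c\bS_{xx,c}$ stays bounded away from singularity. The payoff of the Schur-complement reduction is precisely that it routes all of the distributional content through the already-established Theorem~\ref{thm::general-neyman}, so that asymptotic normality follows from the same finite-population central limit theorem underlying \eqref{eq::asynormal}, with no new limit theorem required.
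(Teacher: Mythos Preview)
Your proposal is correct and follows the same overall strategy as the paper: identify the probability limit $(\bgamma_\infty,\bbeta_c)$ of the TSLS estimator, linearize around it, show that the $\bbeta$-component of the linearization is a sandwich of a difference-in-means of the vector outcomes $\bX_i e''_i(t)$, and then invoke Theorem~\ref{thm::general-neyman} together with the finite-population CLT.

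The execution differs in a way worth noting. The paper computes the full $2K\times 2K$ block matrix $\begin{psmallmatrix}\bA&\bB\\\bC&\bD\end{psmallmatrix}$ of expected cross-moments, inverts it explicitly via Schur complements, and then extracts the $(2,2)$ block of the resulting sandwich covariance. Your route instead forms the row combination ``second block minus $p_1$ times the first'' directly on the linearized score: because $A_{21}=p_1\bS_{xx}=p_1 A_{11}$, the $\Delta\bgamma$ contribution drops out exactly, and the remaining Jacobian entry $A_{22}-p_1A_{12}=p_1p_0\pi_c\bS_{xx,c}$ and score $n^{-1}\sumn(T_i-p_1)\bX_i r_i=p_1p_0\{n_1^{-1}\sumn T_i\bX_i e''_i(1)-n_0^{-1}\sumn(1-T_i)\bX_i e''_i(0)\}$ have a common $p_1p_0$ factor that cancels. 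This is shorter and more transparent than the paper's explicit block-inverse, but it buys exactly the same thing: both routes land on $(\pi_c\bS_{xx,c})^{-1}$ times the Neyman difference-in-means on $\bX e''(t)$, and both must justify the $o_p(1)$ linearization remainder by the same finite-population LLN/CLT argument under Condition~1 and Assumption~\ref{ass::iv}(iii).
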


For variance estimation, define the residual as $\widehat{e}''_i = Y_i^\obs - \bX_i^\T  (\widehat{\bgamma}_\TSLS+ \widehat{\bbeta}_\TSLS )$ for units with $D_i = 1$ and $\widehat{e}''_i = Y_i^\obs - \bX_i^\T  \widehat{\bgamma}_\TSLS$ for units with $D_i=0$. 
We can then use the following sandwich variance estimator
$$
\widehat{\cov}(\widehat{\bbeta}_{\TSLS}) = 
\widehat{\bS} ^{-1}_{xx}(1) 
\left[    \frac{  \widehat{\S}_1( \widehat{e}''  \bX  )   }{n_1}         \right]
\widehat{\bS} ^{-1}_{xx}(1) 
+
\widehat{\bS} ^{-1}_{xx}(0) 
\left[   \frac{  \widehat{\S}_0(  \widehat{e}'' \bX  )    }{n_0}         \right]
\widehat{\bS} ^{-1}_{xx}(0) ,
$$
which has the same probability limit as the Huber--White covariance estimator for $\widehat{\bbeta}_\TSLS.$
Therefore, the randomization itself effectively justifies the use of TSLS for estimating systematic treatment effect variation among Compliers, extending our ITT results. 

%Because $\widehat{\bbeta}_{c,\RI}$ and $\widehat{\bbeta}_\TSLS$ are inherently ratio estimators, there is no guarantee of unbiasedness across randomizations. 

Finally, while $\widehat{\bbeta}_\TSLS$ is a consistent estimator for $\bbeta_c$, $\widehat{\bgamma}_\TSLS$ is not, in general, a consistent estimator for $\bgamma_{c0}$; that is, $\bgamma_\infty \neq \bgamma_{c0}$. Instead, $\widehat{\bgamma}_\TSLS$ converges to $\bgamma_\infty = \bS_{xx}^{-1}\bS_{x0} - \pi_a \bS_{xx}^{-1} \bS_{xx,a}\bbeta_c$.
In the special case of one-sided noncompliance (i.e., $\pi_a = 0$), $\bgamma_{\infty} = \bgamma_0 = \bS_{xx}^{-1}\bS_{x0}$,
the population OLS regression coefficient, among all Compliers and Never Takers, of $Y(0)$ on covariates.

\subsubsection{Omnibus test for systematic treatment effect variation among Compliers} 
With point estimate $\widehat{\bbeta}$ and covariance estimate $\widehat{\cov}(\widehat{\bbeta})$ for $\bbeta_{c}$, we can use the same Wald-type $\chi^2$ test as in \eqref{eq::test_sys} for the presence of systematic treatment effect variation among Compliers. Here, the estimator can be either  randomization-based $\widehat{\bbeta}_{c,\RI}$ or TSLS estimator $\widehat{\bbeta}_\TSLS$; the degrees of freedom are the same, $K-1$. Unlike in the ITT case, we are not aware of existing tests for systematic treatment effect variation among Compliers.

\subsection{Idiosyncratic treatment effect variation with noncompliance}\label{sec::decomposition-with-noncompliance}
\subsubsection{Bounding idiosyncratic variation}
We now turn to decomposing the overall treatment effect in the presence of noncompliance. In this setting, we have three sources of treatment effect variation: (i) systematic treatment effect variation among Compliers, (ii) idiosyncratic treatment effect variation among Compliers, and (iii) treatment effect variation due to noncompliance.

First, recall that total treatment effect variation is $S_{\tau\tau} =  \sum_{i=1}^n  (\tau_i  - \tau)^2 / n$. We can define a similar quantity among Compliers:
\begin{eqnarray*}
S_{\tau\tau,c} = \frac{1}{n_c}   \sum_{i=1}^n I_{(U_i = c) } (\tau_i -  \tau_c)^2 .
\end{eqnarray*}
As in Section~\ref{sec:idio_var}, we can decompose this variation into systematic and idiosyncratic treatment effect variation for Compliers, respectively: 
\begin{eqnarray*}
S_{\delta\delta, c} = \frac{1}{n_c} \sum_{i=1}^n I_{(U_i = c) } (\delta_i - \tau_c)^2, \qquad S_{\varepsilon\varepsilon, c} = \frac{1}{n_c} \sum_{i=1}^n I_{(U_i = c) }  \varepsilon_i^2.
\end{eqnarray*}
Because treatment effects for Never Takers and Always Takers are zero, there is no treatment effect variation for these units. 
The component of treatment effect variation due to compliance status is
\begin{eqnarray*}
S_{\tau\tau, U} = \sum_{u=c,a,n}  \pi_u  ( \tau_u -  \tau)^2 .
\end{eqnarray*}
Using $ \tau_a= \tau_n =0$ and $ \tau = \pi_c  \tau_c$ due to the exclusion restrictions, we have the following theorem summarizing the relationships among the above components.

\begin{theorem}
\label{thm::decomposition-noncompliance}
$
S_{\tau\tau}  = \pi_c  S_{\tau\tau,c} + S_{\tau\tau, U},
$ 
$
S_{\tau\tau, c} = S_{\delta \delta, c} + S_{\varepsilon\varepsilon,c},
$
and
$
S_{\tau\tau,U} = \pi_c(1-\pi_c) \tau_c^2.
$
\end{theorem}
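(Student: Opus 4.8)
The plan is to prove the three identities in turn, each a variance decomposition of a different flavor, and then to read off the theorem. The only structural facts I need are the Complier-level normal equations from decomposition~\eqref{eq::decomposition-noncompliance} and the exclusion restrictions in Assumption~\ref{ass::iv}(ii); I would record these once and reuse them.

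I would begin with the second identity, $S_{\tau\tau,c} = S_{\delta\delta,c} + S_{\varepsilon\varepsilon,c}$, since it is the cleanest. Among Compliers, $\tau_i = \delta_i + \varepsilon_i$ with $\delta_i = \bX_i^\T \bbeta_c$, where $\bbeta_c$ is the least-squares coefficient of $\tau_i$ on $\bX_i$ restricted to the Complier stratum. Because $\bX_i$ contains a constant, the normal equations give $\sum_i I_{(U_i=c)}\varepsilon_i = 0$ and $\sum_i I_{(U_i=c)}\varepsilon_i \bX_i = \bzero$, so the Complier average of $\varepsilon_i$ is zero and hence the Complier average of $\delta_i$ equals $\tau_c$. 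Expanding $(\tau_i - \tau_c)^2 = \{(\delta_i - \tau_c) + \varepsilon_i\}^2$ and summing over Compliers, the cross term $\sum_i I_{(U_i=c)}(\delta_i - \tau_c)\varepsilon_i = \bbeta_c^\T \sum_i I_{(U_i=c)}\bX_i \varepsilon_i - \tau_c \sum_i I_{(U_i=c)}\varepsilon_i$ vanishes by the two normal equations, leaving exactly $S_{\delta\delta,c} + S_{\varepsilon\varepsilon,c}$.

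Next I would establish the first identity by a grouped between/within decomposition across the three compliance types. Partitioning the sum defining $S_{\tau\tau}$ by $U_i \in \{c,a,n\}$ and, within each type $u$, writing $\tau_i - \tau = (\tau_i - \tau_u) + (\tau_u - \tau)$, the within-type cross term vanishes because $\sum_i I_{(U_i=u)}(\tau_i - \tau_u) = 0$ by definition of $\tau_u$. Under the exclusion restrictions, $\tau_i = 0$ for every Always Taker and Never Taker, so those within-type sums of squares are zero and only the Complier within-type term $n_c S_{\tau\tau,c}$ survives. Dividing by $n$ and recognizing $n_u/n = \pi_u$ yields $S_{\tau\tau} = \pi_c S_{\tau\tau,c} + \sum_{u=c,a,n}\pi_u(\tau_u - \tau)^2 = \pi_c S_{\tau\tau,c} + S_{\tau\tau, U}$. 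Finally, the third identity is a direct evaluation: substituting $\tau_a = \tau_n = 0$ and $\tau = \pi_c \tau_c$ gives $S_{\tau\tau, U} = \pi_c(\tau_c - \pi_c\tau_c)^2 + (\pi_a + \pi_n)(\pi_c\tau_c)^2$, and using $\pi_a + \pi_n = 1 - \pi_c$ (monotonicity forces $\pi_d = 0$) and factoring $\pi_c \tau_c^2(1-\pi_c)$ collapses this to $\pi_c(1-\pi_c)\tau_c^2$.

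None of the steps is genuinely hard; the only place requiring care is the bookkeeping of orthogonality in the second identity, where the cross term cancels precisely because the constant lies in $\bX_i$ and hence both normal equations are available. The first identity simply needs one to track which strata contribute zero sums of squares, which is immediate once the exclusion restrictions are invoked.
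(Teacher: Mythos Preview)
Your proposal is correct and follows essentially the same route as the paper: both arguments are elementary variance decompositions driven by the exclusion restrictions ($\tau_i=0$ for $U_i\in\{a,n\}$, hence $\tau=\pi_c\tau_c$) and the Complier-level normal equations. The only cosmetic difference is that for the first identity you invoke an explicit between/within (ANOVA-style) split across compliance types, whereas the paper expands $S_{\tau\tau}=n^{-1}\sum_i\tau_i^2-\tau^2$ directly and then adds and subtracts $\pi_c\tau_c^2$; these are trivially equivalent.
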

In words, total treatment effect variation has three parts: (i) systematic treatment effect variation among Compliers, $\pi_c S_{\delta\delta,c}$; (ii) idiosyncratic treatment effect variation among Compliers,  $\pi_c S_{\varepsilon\varepsilon,c}$; (iii)  treatment effect variation due to noncompliance, $S_{\tau\tau,U}$.

As in the ITT case, even though $S_{\varepsilon\varepsilon,c}$ is not identifiable, we can derive bounds in terms of the marginal distributions of the residuals, $\{ e_i'(1) =  Y_i(1) - \bX_i^\T \bgamma_{1c}: U_i=c , i=1,\ldots,n \}$ and $\{ e_i' (0)  = Y_i(0) - \bX_i^\T \bgamma_{0c}: U_i=c ,i=1,\ldots,n \}$, denoted by $F_{1c}(y)$ and $F_{0c}(y)$, and with marginal variances, $V_{1c}$ and $V_{0c}.$ 
Once we estimate these quantities, we can plug them in to Theorem~\ref{thm::bounds_for_s_tau_tau} and Corrolary~\ref{coro::independent} to get our bounds.
As compliance status is only partially observed, we have to estimate these quantities by differencing observed distributions; we defer this and some other technical details to the Supplementary Material.

%We show this with the following theorem. 
%
%
%
%\begin{corollary}
%\label{coro::noncompliance}
%Sharp bounds on $S_{\varepsilon\varepsilon,c}$ are 
%$
%\underline{S}_{\varepsilon\varepsilon,c}  
%\leq S_{\varepsilon\varepsilon,c} \leq 
%\overline{S}_{\varepsilon\varepsilon,c},
%$
%where
%\begin{eqnarray*}
%\underline{S}_{\varepsilon\varepsilon,c} =
%\int_0^1 \{   F_{1c}^{-1}(u)   - F_{0c}^{-1}(u)     \}^2 du,
%\quad 
%\overline{S}_{\varepsilon\varepsilon,c} =
%\int_0^1 \{   F_{1c}^{-1}(u)   - F_{0c}^{-1}(1-u)     \}^2 du
%\end{eqnarray*}
%are attainable when $\{ e_i'(1): U_i=c,i=1,\ldots,n  \}$ and $\{ e_i'(0) : U_i=c, i=1,\ldots, n  \}$ have the same ranks and opposite ranks, respectively. 
%If we further assume that $e_i'(1)$ and $e_i'(0)$ for those units with $U_i=c$ have nonnegative correlation, the upper bound can be further sharpened to $V_{1c} + V_{0c}$.
%\end{corollary}
%We defer discussion of estimating $F_{tc}(y)$ and other technical details to the Supplementary Material.

\subsubsection{Treatment effect decomposition}
Since there are two sources of variation---covariates and noncompliance---there are three possible $R^2$-type measures. First, we can measure the treatment effect variation explained by noncompliance alone (i.e., only $U$):
\begin{eqnarray*}
R^2_{\tau, U} =    \frac{S_{\tau\tau, U}}{  S_{\tau\tau}  }
=  \frac{S_{\tau\tau, U}}{  S_{\tau\tau, U} + \pi_c  S_{\tau\tau,c}  }
=  \frac{S_{\tau\tau, U}}{  S_{\tau\tau, U} + \pi_c S_{\delta\delta, c} + \pi_c S_{\varepsilon\varepsilon,c}  }.
\label{eq::r-u}
\end{eqnarray*}
Second, we can measure the proportion of treatment effect variation among Compliers explained by covariates (i.e., only $\bX$):
\begin{eqnarray*}
R^2_{\tau, c} =  \frac{  S_{\delta\delta, c}  } { S_{\tau\tau, c}    } 
= \frac{  S_{\delta\delta, c}  } { S_{\delta\delta, c} + S_{\varepsilon\varepsilon, c}    } .
\label{eq::r-c}
\end{eqnarray*}
Third, we can measure the treatment effect variation explained by covariates and noncompliance (i.e., both $\bX$ and $U$):
\begin{eqnarray*}
R^2_{\tau, U\bX} 
=\frac{   S_{\tau\tau,U}  + \pi_c S_{\delta \delta, c}  }{ S_{\tau\tau}    }
= \frac{   S_{\tau\tau,U}  + \pi_c S_{\delta \delta, c}  }{ S_{\tau\tau,U} + \pi_c S_{\delta\delta, c} + \pi_c S_{\varepsilon\varepsilon,c}    } .
\label{eq::r-ux}
\end{eqnarray*}

For each measure, we can use tailored versions of Corollary \ref{coro::independent} to construct bounds, or conduct sensitivity analysis as in Section \ref{sec::sensitivity-analysis}, with the sensitivity parameter expressed as the Spearman correlation between the treatment and control potential outcomes among Compliers.

\section{Simulation study}
\label{sec::examples}

\subsection{ITT estimators}

We simulate completely randomized experiments to evaluate the finite sample performance of the tests for systematic treatment effect variation based on $\widehat{\bbeta}_\OLS$, $\widehat{\bbeta}_\RI$, and $\widehat{\bbeta}_\RI^w$, the model-assisted version discussed in the Supplementary Material. Our data generation process is inspired by the Head Start Impact Study (HSIS) study analyzed in the next section.
For a given sample size, we first generate four independent covariates ($X_1$, a standard normal, $X_2$, a binary covariate with probability $0.5$ being $1$, $X_3$, a binary covariate with probability $0.25$ being $1$, and $X_4$, a standard normal). 
The control potential outcomes are then generated from
$$
Y_i(0) = 0.3 + 0.2 X_{1i} + 0.3  X_{2i} - 0.4 X_{3i} + 0.8 X_{4i} + u_i ,\qquad  u_i \sim \mathcal{N}(0, \sigma^2 ).
$$
We select $\sigma^2 = 0.26$ to make the marginal variance for the control potential outcomes 1; thus we can interpret impacts in ``effect size'' units.
The $R^2$ of regressing $Y(0)$ onto the covariates is approximately 0.74, due to the ``pre-test''-like variable $X_{4i}$.
Without $X_{4i}$, the $R^2$ is about 0.09.

The treatment effects are $\tau_i = \delta_i + \varepsilon_i$, with (i) either $\delta_i = 0.3$ for all $i$, or $\delta_i = 0.2 + 0.1 X_{1i} + 0.4 X_{3i}$; and (ii) either $\varepsilon_i = 0$ for all $i$, or $\varepsilon_i \sim \mathcal{N}(0,0.2^2)$.
All combinations of these two options give the four cases of (a) no treatment effect variation, (b) only systematic variation, (c) idiosyncratic variation with no systematic variation, and (d) both systematic and idiosyncratic variation.
For an $\alpha$-level test of systematic variation, scenarios (a) and (c) should only reject at rate $\alpha$, while we would like to see high rejection rates for scenarios (b) and (d). 
For scenario (d), the $R^2_\tau$ is about 0.5; systematic variation explains a good share of the overall variation.

To generate a synthetic dataset we generated all potential outcomes, randomized units into treatment with probability $0.6$, and then calculated the corresponding observed outcomes.
We then conducted a test for systematic variation using each of our three estimators.
For $\widehat{\bbeta}_\RI$ and $\widehat{\bbeta}_\OLS$ we use $X_1, X_2, X_3$.
For our covariate-adjusted estimator $\widehat{\bbeta}_\RI^w$ we also include the fairly predictive $X_4$ for adjustment.

Figure \ref{fg::power-itt} shows the power of these tests, with $\alpha=0.05$, for different sample sizes. First, all estimators appear asymptotically valid, consistent with the theoretical results. 
The OLS and adjusted estimators are slightly anti-conservative for small $n$, however, with rejection rates of around 9\%.
Second, the OLS estimator appears to have the greatest power in this setting, which is unsurprising since the true data generating process is a linear model. Finally, covariate adjustment slightly improves the power of the $\RI$ estimator. 
Overall, in the scenarios we consider, we only achieve decent levels of power in large samples, although there seems to be reasonable power for the sample size in the data application, $n = 3,586$.

\begin{figure}[t]
\includegraphics[width = \textwidth]{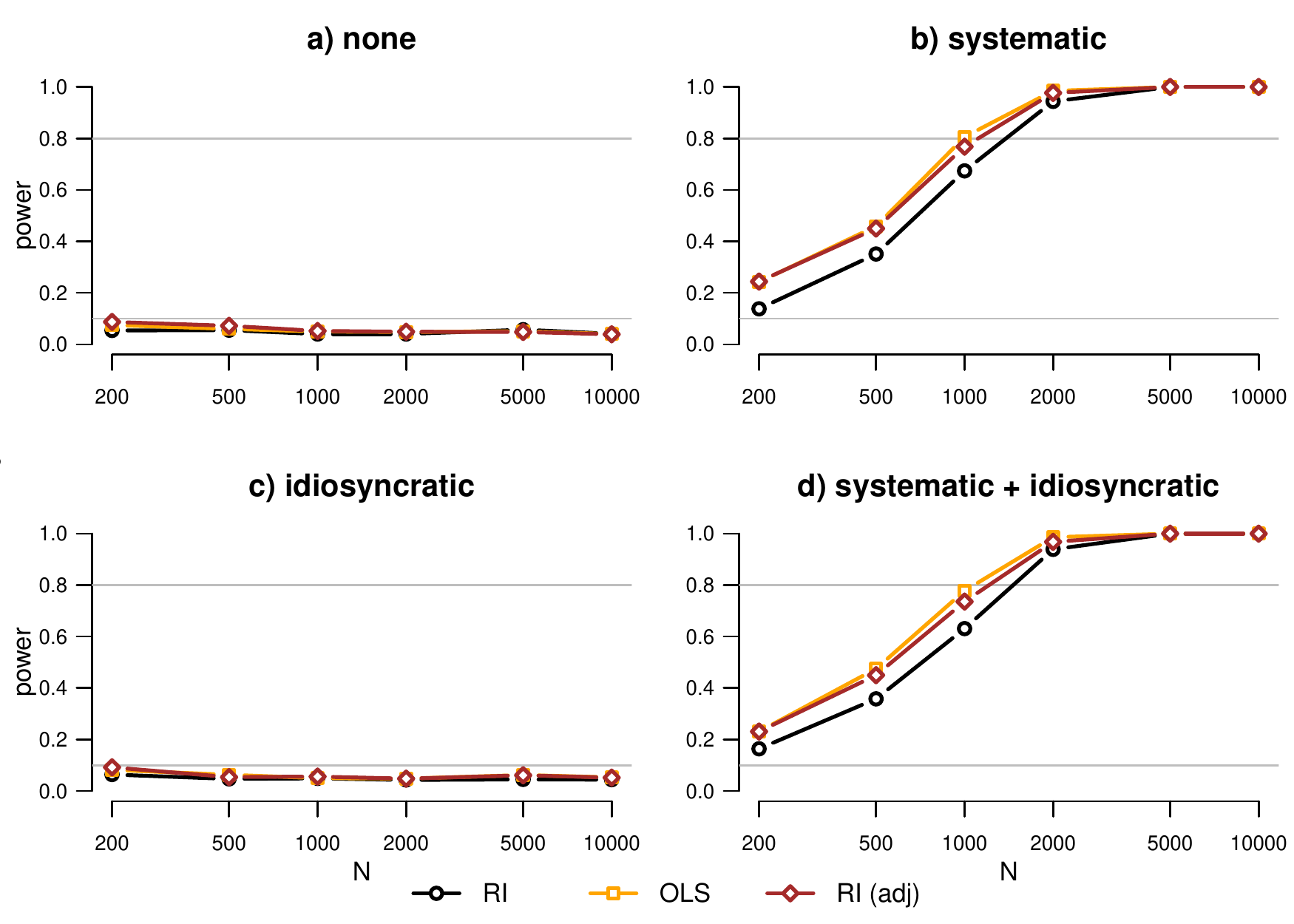}
\caption{Power of the tests based on $\widehat{\bbeta}_\RI$, $\widehat{\bbeta}_\OLS$, and $\widehat{\bbeta}_\RI^w$.}\label{fg::power-itt}
\end{figure}

\subsection{LATE estimators}

We next simulate completely randomized experiments with noncompliance to evaluate the finite sample performance of the tests for systematic treatment effect variation among Compliers based on $\widehat{\bbeta}_{c,\RI}$ and $\widehat{\bbeta}_\TSLS.$
We first generated a complete dataset as in the ITT case above, and then assigned strata membership to all units with probabilities proportional to their covariates.
For Always Takers we then set $Y_i(0) = Y_i(1)$, and for Never Takers, $Y_i(1) = Y_i(0)$.
The overall ITT is now reduced to 0.21 (due to the 0 effects of Never Takers and Always Takers), although the CACE is still approximately 0.3.
The proportion of Compliers is approximately 68\%.

The Compliers have the systematic and idiosyncratic effects described as above.
We tested for the presence of systematic variation for Compliers under the exclusion restrictions.
Figure \ref{fg::power-late} shows the power of these tests for our $\RI$ and $\TSLS$ estimators.
First, in this scenario, the 2SLS and the RI estimators are virtually equivalent; the additional adjustment provided by $\TSLS$ does not add significantly to the precision.
We see the tests are valid (they even appear conservative) for cases (a) and (c).  
Power is reduced compared to the ITT simulation; this is reasonable as power is effectively a function of the number of Compliers, with additional uncertainty due to partial information about the identity of Compliers.

\begin{figure}[ht]
\includegraphics[width = \textwidth]{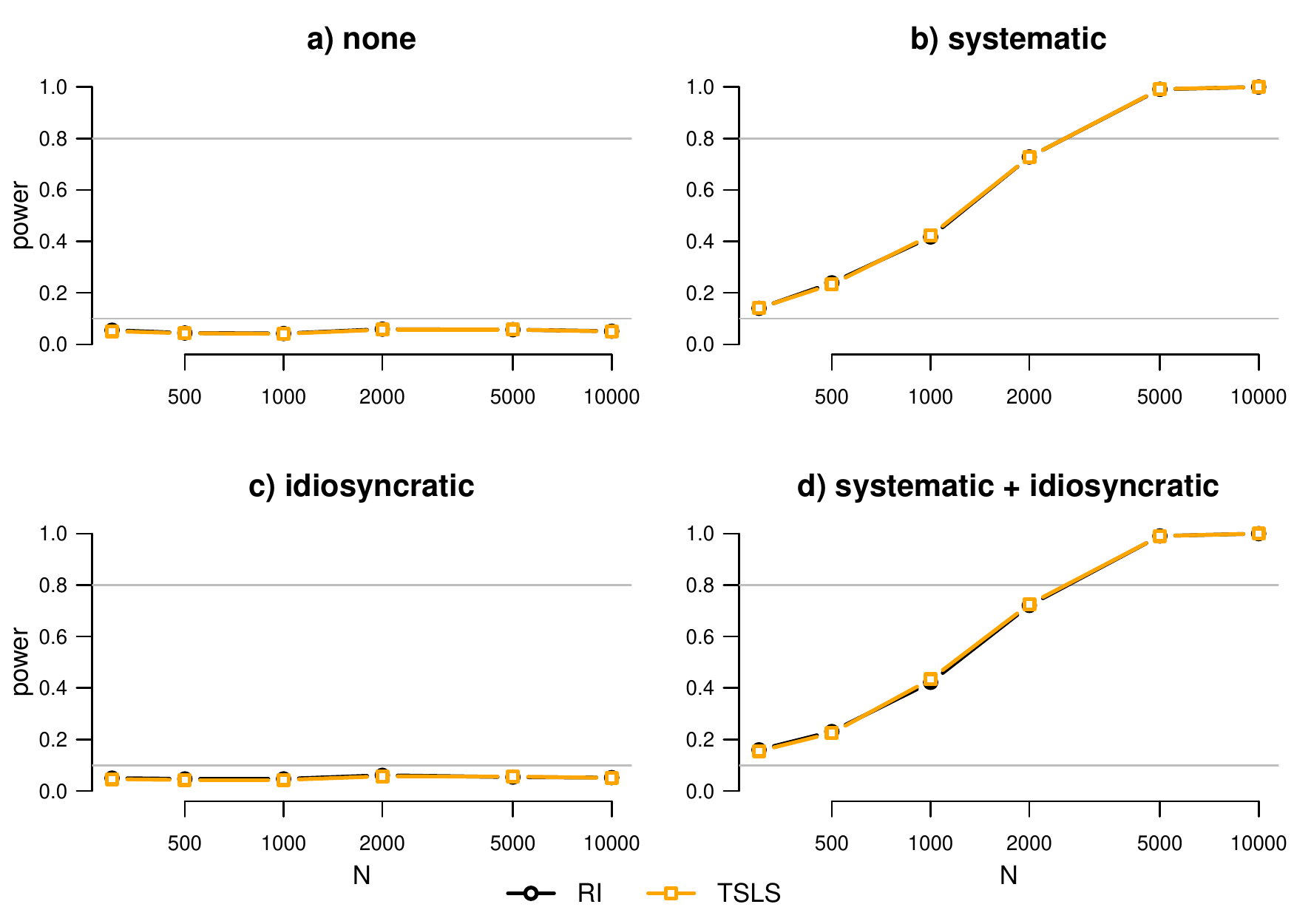}
\caption{Power of the tests based on $\widehat{\bbeta}_{c,\RI}$ and $\widehat{\bbeta}_\TSLS.$}\label{fg::power-late}
\end{figure}

\section{Application to the Head Start Impact Study}
\label{sec::hsis}

Established in 1965, Head Start is the largest Federal preschool program in the United States, serving nearly 1 million low-income three- and four-year-old children each year at a cost of over \$7 billion~\citep{acf2015}. Researchers and policymakers have debated Head Start's effectiveness since its inception, with early randomized trials finding limited impacts~\citep[e.g.,][]{westinghouse1969} and quasi-experimental studies showing much larger effects~\citep[e.g.,][]{Currie:1993wo}. Designed in part to settle this debate, the Head Start Impact Study (HSIS) is a large-scale, nationally representative randomized trial of Head Start first launched in 2002~\citep{puma2010head}.  
The Congressional mandate for HSIS included two broad questions: (1) the program's {\em overall} impact, and (2) how impacts {\em vary} across children and centers. The policy debate has largely focused on this first question; 
HSIS only found modest average effects on a range of children's cognitive and social-emotional outcomes. However, both the original study and several recent papers argue that these topline results mask important treatment effect variation~\citep[e.g.,][]{bloom_weiland2014, Bitler:2014ve, ding::2015jrssb, walters2015inputs, feller2016hsis}. Understanding such variation is critical both for assessing the program's benefits and costs and for improving the practice and science of early childhood education. 

HSIS collected a rich set of covariates about children and their families, including pre-test score, child's age, child's race, child's home language, mother's education level, and mother's marital status. At the same time, many potentially important covariates are unavailable. For instance, while families must be low-income to be eligible for Head Start, HSIS does not include information on families' actual income nor other financial details that could be important predictors of program impact. In addition,~\citet{feller2016hsis} and others argue that that the setting in which a child would otherwise receive care is an important source of impact variation, although this is not directly observable. 

We now use the methods outlined above to assess treatment effect variation in HSIS. The original study included $n = 4,400$ total children, with $n_1 = 2,644$ in the treatment group and $n_0 = 1,796$ in the control group. Following earlier analyses~\citep{ding::2015jrssb} and to simplify exposition, we restrict our attention to a complete-case subset of the HSIS, with $n_1 = 2,238$ in the treatment group and $n_0 = 1,348$ in the control group (so $p_1 \approx 0.62$ and $p_0\approx 0.38$). 
Our outcome of interest is the Peabody Picture Vocabulary Test (PPVT), a widely used measure of cognitive ability in early childhood. To assess treatment effect variation, we consider the full set of child- and family-level covariates used in the original HSIS analysis of~\citet{puma2010head}, including those mentioned above. After creating dummy variables for factors (e.g., re-coding race), the covariate matrix has $17$ columns. See Figure~\ref{fg::indiv_var_R2} for a complete list.

\subsection{Decomposing variation in the ITT effect}
We first explore treatment effect variation for the ITT estimate, beginning with estimating systematic treatment effect variation. We examine three estimators: the randomization-based and OLS estimators discussed in Section~\ref{sec::systematic}, $\widehat{\bbeta}_\RI$ and $\widehat{\bbeta}_\OLS$, and the corresponding model-assisted version of the RI estimator discussed in the Supplementary Material, $\widehat{\bbeta}_\RI^w$. For this latter estimator, we use all available covariates to adjust the standard estimators, that is, $\bW$ is the entire vector of covariates.

\paragraph{Omnibus test for systematic treatment effect variation.} 
We begin by using these estimators for an omnibus test of whether any treatment effect variation is explained by the full set of covariates. 
The $p$-values for the unadjusted $\widehat{\bbeta}_\RI$ estimator and model-assisted $\widehat{\bbeta}_\RI^w$ are $0.39$ and $0.25$, respectively, which do not show any evidence of treatment effect variation. 
The OLS estimator, however, shows much stronger evidence with $p = 0.005$. 

Importantly, all three estimators are based on the same underlying assumptions: the randomization itself justifies all three $p$-values. And while we expect the unadjusted $\widehat{\bbeta}_\RI$ to have the lowest power, it is instructive that the $p$-value for $\widehat{\bbeta}_\OLS$ is substantially smaller than the $p$-value for the covariate-adjusted $\widehat{\bbeta}_\RI^w$. 
%
%Thus, the OLS estimator is more powerful than the other estimators in this setting because it better exploits the available covariate information. 
%
As we discuss in Section~\ref{sec:ols_int}, $\widehat{\bbeta}_\OLS$ can account for covariate imbalance across experimental arms by estimating the $\bS_{xx}$ matrix separately for the treatment and control groups. By contrast, $\widehat{\bbeta}_\RI$ does not address imbalance in $\bX$ and instead attempts to residualize out the $Y$ in order to get a more precise estimate of the relationship of the $\bX$ to $Y$ for each treatment arm. Based on the discrepancy in $p$-values, adjusting for baseline imbalance is clearly important in this example. 
%In this context, apparently, the imbalance was more important to account for, hence the discrepancy in the $p$-values between $\widehat{\bbeta}_\OLS$ and $\widehat{\bbeta}_\RI^w$ (we would expect the lower-powered, completely unadjusted $\widehat{\bbeta}_\RI$ to have a higher $p$-value in general).

%The lower OLS $p$-value is \emph{not} due to additional assumptions imposed.
%All the estimators are based on the same assumptions and that no specific modeling form is needed for validity.  
%The differences in estimators is purely from different powers due to different ways of incorporating covariate information (as we saw in the simulation studies).  
%Future work would attempt to combine these two forms of adjustment to simultaneously account for covariate imbalance as well as estimating the  relationship between $\bX$ and $Y$ more precisely.

\paragraph{Treatment effect $R^2_\tau$.} Next, we examine how much of the variation could be explained by our covariates.
Figure~\ref{fg::head_start_R2} shows values of the treatment effect $R^2_\tau$ using $\widehat{\bbeta}^w_\RI$ to estimate the systematic variation. Results are nearly identical using the other estimators. In the worst case of perfect negative dependence between potential outcomes (not shown), the treatment effect $R^2_\tau$ could be as low as 0.01. Assuming that this dependence is nonnegative, the treatment effect $R^2_\tau$ ranges from 0.03 to 0.76. While the estimate is clearly sensitive to the unidentifiable sensitivity parameter, the covariates explain a substantial proportion of treatment effect variation for values of $\rho$ near 1.

\begin{figure}

	\centering
	\begin{subfigure}[b]{0.48\textwidth}
		\includegraphics[width = \textwidth]{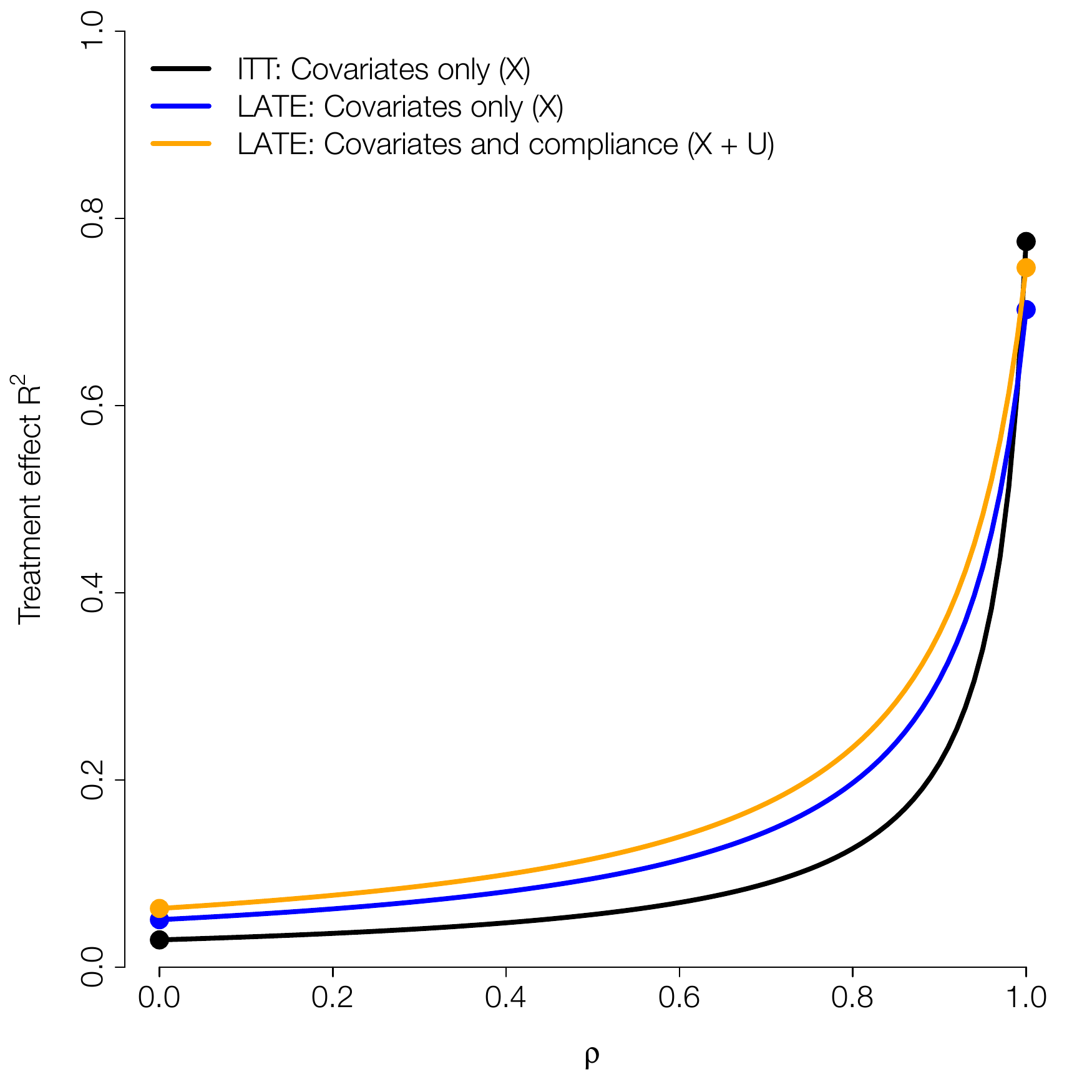}
		\caption{Overall $R^2_\tau$}\label{fg::head_start_R2}
	\end{subfigure}%
	\quad\begin{subfigure}[b]{0.48\textwidth}
		\includegraphics[width = \textwidth]{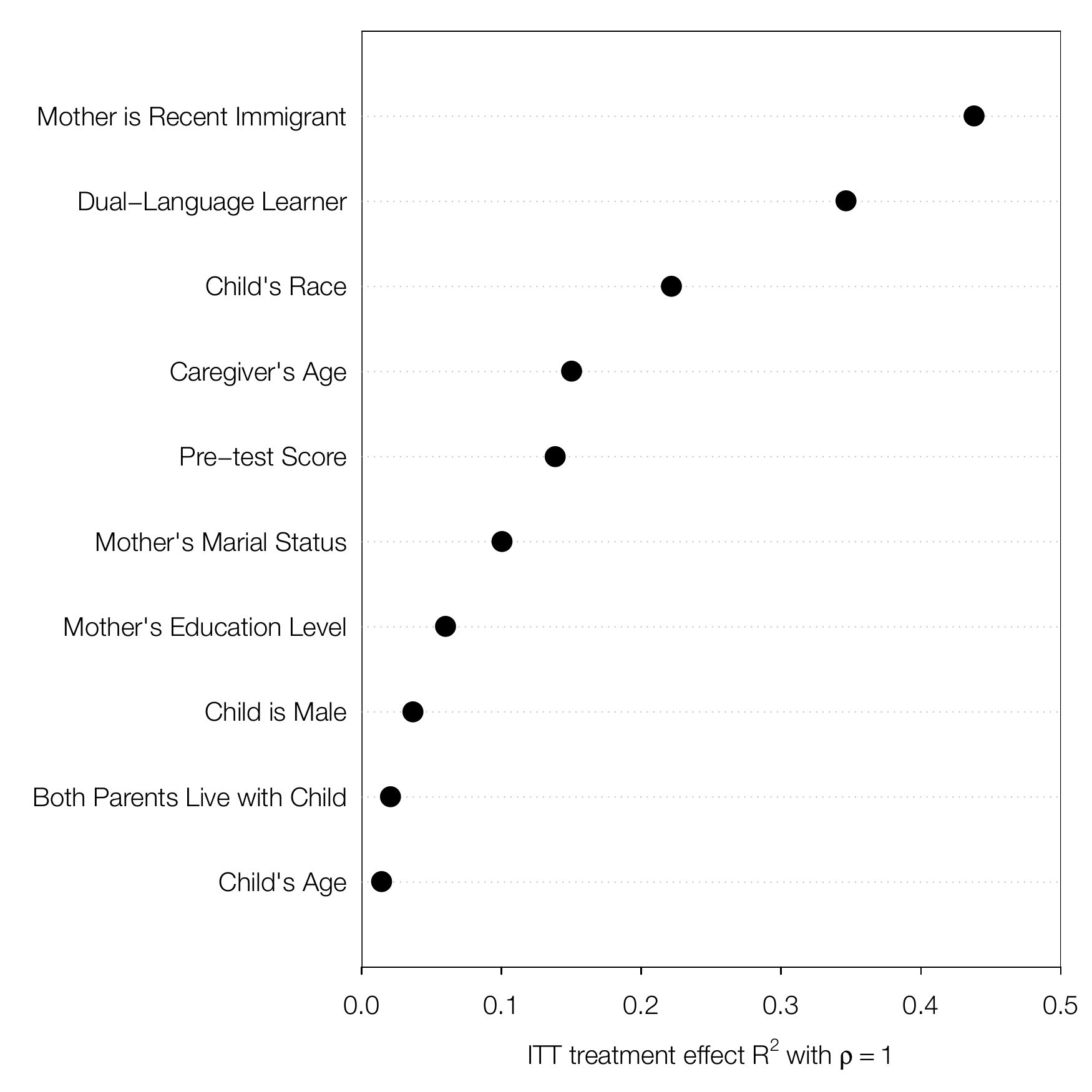}
		\caption{$R^2_\tau$ separately by covariate}\label{fg::indiv_var_R2}
	\end{subfigure}
	
	\caption{Treatment effect $R^2_\tau$, with sensitivity parameter, $\rho \in [0,1]$.}

\end{figure}

We can also use this framework to assess the relative importance of each covariate in terms of explaining overall treatment effect variation. To do this, we use the model-assisted RI estimator, $\widehat{\bbeta}_\RI^w$, adjusting for all covariates (i.e., $\text{dim}(\bW) = 17$) but restricting systematic treatment effect variation to one covariate at a time. Note that we consider factors (e.g., race) as a group. Figure~\ref{fg::indiv_var_R2} shows the resulting estimates for the upper bound of $R^2_\tau$, with lower bound estimates all below 0.01.  Having a mother who is a recent immigrant and dual language learner status (which are highly correlated in practice) could each explain a substantial proportion of treatment effect variation, consistent with previous results from~\citet{bloom_weiland2014} and~\citet{Bitler:2014ve}. This is not true for other covariates, like mother's education level.

\paragraph{Negative correlation between treatment effect and control potential outcomes.} Finally, we test whether the individual-level idiosyncratic treatment effects, $\{\varepsilon_i\}_{i=1}^n$, are negatively correlated with the control potential outcomes, $\{Y_i(0)\}_{i=1}^n$, extending results from~\citet{raudenbush::2015}. As outlined in the Supplementary Material, we do so by testing whether the variance of $\{  Y_i^\obs  - \bX_i^\T  \widehat{\bbeta}^w_\RI: T_i = 1 \}$ is smaller than the variance of $  \{  Y_i^\obs: T_i = 0 \}$. This yields a $p$-value of $ 0.02$, which suggests that the unexplained treatment effect is indeed larger for smaller values of the control potential outcomes. This result is consistent with findings from~\citet{Bitler:2014ve} who use a quantile treatment effect approach.

\subsection{Incorporating noncompliance}
As with many social experiments, there is substantial noncompliance with random assignment in HSIS. In the analysis sample we consider here, the estimated proportion of compliance types is  $\widehat{\pi}_c = 0.69$ for Compliers,  $\widehat{\pi}_a = 0.13$ for Always Takers, and $\widehat{\pi}_n = 0.18$ for Never Takers. Given the exclusion restrictions for Always Takers and Never Takers, the treatment effect is therefore zero (by assumption) for over 30 percent of the sample, suggesting that noncompliance will be an important component of treatment effect variation. 

In the setting with noncompliance, we focus on two estimators for systematic treatment effect variation among Compliers: the randomization-based estimator, $\widehat{\bbeta}_{c,\RI}$, and the Two-Stage Least Squares estimator, $\widehat{\bbeta}_\TSLS$. We first use these estimators to construct omnibus tests for systematic treatment effect variation among Compliers. Tests using both estimators show strong evidence for such variation, with $p$-value $ 0.02$ using $\widehat{\bbeta}_{c,\RI}$ and $p$-value $ 0.01$ using $\widehat{\bbeta}_\TSLS$.

Finally, we turn to decomposing the overall treatment effect. As in the ITT case, we assume that the potential outcomes have a nonnegative correlation. Figure~\ref{fg::head_start_R2} shows the treatment effect $R^2$ among Compliers, which ranges from $R^2_{\tau,c} = 0.05$ to $R^2_{\tau,c} = 0.68$. 
Next, we can calculate treatment effect variation due to noncompliance, $R^2_{\tau,U}$. In the case of HSIS, this is relatively small---between 0.01 and 0.16---in part because the overall treatment effect is fairly small. Therefore, the overall treatment effect decomposition due to both covariates and noncompliance, $R^2_{\tau, U\bX}$, is quite close to $R^2_{\tau, c}$, as shown in Figure~\ref{fg::head_start_R2}. Taken together, these estimates suggest that there is indeed important treatment effect variation that is neither captured by pre-treatment covariates nor by noncompliance, consistent with previous results in~\citet{ding::2015jrssb}.

\section{Conclusion}
\label{sec::discussion}
In this paper, we propose a broad, flexible framework for assessing and decomposing treatment effect variation in randomized experiments with and without noncompliance. In general, we believe this is a natural setup for researchers to formulate and investigate a broad range of questions about impact heterogeneity~\citep[e.g.,][]{heckman::1997}. Applications include assessing underlying causal mechanisms and targeting treatments based on individual-level characteristics. Understanding such variation is also important for the design of experiments.~\citet{djebbari::2008}, for example, argue that characterizing the size of the idiosyncratic treatment effect is useful for determining the value of additional data collection.

We briefly note several directions for future work. 
First, our primary purpose was to propose a framework for analysis rooted in and justified by the randomization itself.
As a result, we focused on the core properties of several relatively simple versions of linear regression and TSLS. 
We did not, however, fully explore their practical and finite-sample properties.
For example, in future work, we hope to determine the settings in which model assistance will most improve estimation and assess the increased power of the $\OLS$ approach versus the unbiased $\RI$ approach.
We are also investigating how to connect model assisted and $\OLS$ approaches to take advantage of both methods of precision gain.
Similarly, there is still much potential improvement in determining ways of characterizing the degree of heterogeneity, such as with an effect size for the systematic variation.

Second, a natural extension is to use more complex methods to estimate systematic treatment effects, such as via hierarchical models~\citep{feller2015hierarchical} or via machine learning methods~\citep{wager2015estimation}, extending the results for the omnibus test and treatment effect $R^2_\tau$ accordingly. While the guarantees from randomization are clearly weaker in such settings, researchers can assess these tradeoffs themselves. For example, hierarchical modeling would be especially useful in the Head Start Impact Study due to the multi-site design~\citep{bloom_weiland2014}. 

Third, a question of increasing practical importance is the generalizability of experimental results to a given target population~\citep{stuart2011use}. We believe that the treatment effect $R^2_\tau$ is a critical measure for assessing the credibility of these generalizations. In short, if there is substantial idiosyncratic treatment effect variation, i.e., $R^2_\tau$ is small, then researchers should be wary of using observed covariates to extrapolate treatment effects.

Finally, a question is how to extend this treatment effect variation framework to non-randomized settings. While the results would necessarily rest on much stronger assumptions, many settings already use an as-if-randomized framework, such as in observational studies~\citep{rosenbaum::2002, imbens::2015}. 
Under this approach, extensions should be natural.

%\clearpage
\bibliographystyle{abbrvnat}
\bibliography{referencesTEV}

%%%%%%%%%%%%%%%%%%%%%%%%%%%%%%%%%%%%%%%%%%%%%%%%%%%%%%%%%%%%%%%%%%%%%%%%%%%%%%%%%%%%%%%%%%%%%%%%%%%%
%%%%%%%%%%%%%%%%%%%%%%%%%%%%%%%%%%%%%%%%%%%%%%%%%%%%%%%%%%%%%%%%%%%%%%%%%%%%%%%%%%%%%%%%%%%%%%%%%%%%
%%% 
%%% SUPPLEMENTARY MATERIALS
%%%
%%%%%%%%%%%%%%%%%%%%%%%%%%%%%%%%%%%%%%%%%%%%%%%%%%%%%%%%%%%%%%%%%%%%%%%%%%%%%%%%%%%%%%%%%%%%%%%%%%%%
%%%%%%%%%%%%%%%%%%%%%%%%%%%%%%%%%%%%%%%%%%%%%%%%%%%%%%%%%%%%%%%%%%%%%%%%%%%%%%%%%%%%%%%%%%%%%%%%%%%%

\newpage
\setcounter{page}{1}
\begin{center}
{\bf \Large  Supplementary Material\\for\\``Decomposing Treatment Effect Variation''}
\end{center}

    \setcounter{equation}{0}
        \setcounter{section}{0}
    \setcounter{lemma}{0}
    \setcounter{theorem}{0}
    \setcounter{figure}{0}
        \setcounter{table}{0}
        \setcounter{corollary}{0}

\renewcommand{\theequation}{A.\arabic{equation}}
\renewcommand{\thelemma}{A.\arabic{lemma}}
\renewcommand{\thecorollary}{A.\arabic{corollary}}
\renewcommand{\thesection}{Appendix~\Alph{section}}
\renewcommand{\thetheorem}{A.\arabic{theorem}}
\renewcommand{\thefigure}{A.\arabic{figure}}
\renewcommand{\thetable}{A.\arabic{table}}

\medskip 
 \ref{appendix::a} gives all the proofs and \ref{appendix::b} provides the additional commentary mentioned in the main text. The finite population central limit theorem (FPCLT) we use for our asymptotic proofs is Theorem 5 of \citet{li2017general}, which requires some mild moment conditions on the covariates and potential outcomes, as outlined in the main text.

\section{Lemmas and Proofs}
\label{appendix::a}

Before we prove Theorem~\ref{thm::general-neyman}, we provide a few lemmas to ease the notational burden and amount of algebra of subsequent calculations.
These lemmas allow us to derive expressions for our estimators in terms of matrix algebra rather than the summation-style approach typically seen for Neyman-style derivations in the literature.

To begin, let $\bone_n = (1, \ldots, 1)^\T$ and $\bzero_n= (0, \ldots, 0)^\T$ be column vectors of length $n$, and $\bI_n$ be the $n \times n$ identity matrix. Then 
$
\bS_n = \bI_n  - n^{-1}\bone_n \bone_n^\T
$
is the projection matrix orthogonal to $\bone_n$ with $ \bS_n \bone_n= \bzero_n$. 
Under this formulation, the covariance matrix of the treatment assignment vector is a scaled projection matrix orthogonal to $\bone_n$, as shown in the following lemma.

\begin{lemma}\label{lemma::cov}
The treatment assignment vector $\bT$ of a completely randomized experiment has 
$$
E(\bT) = \frac{n_1}{n} 1_n, \quad \cov(\bT) = \frac{n_1n_0}{n(n-1)} \bS_n.
$$
\end{lemma}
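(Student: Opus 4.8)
The plan is to exploit the exchangeability of the completely randomized design to reduce the full $n \times n$ covariance matrix to two scalar unknowns, and then pin those scalars down using the marginal Bernoulli distribution of each $T_i$ together with the deterministic constraint $\sum_{i=1}^n T_i = n_1$. This is a standard fact, but I record it carefully because the later lemmas build on the projection-matrix representation.

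First I would establish the marginals. By the symmetry of the assignment mechanism, each $T_i$ is marginally Bernoulli with $\pr(T_i = 1) = \binom{n-1}{n_1-1}/\binom{n}{n_1} = n_1/n$, since that counts the assignments placing unit $i$ in treatment. This immediately gives $E(T_i) = n_1/n$, hence $E(\bT) = (n_1/n)\bone_n$, and $\var(T_i) = (n_1/n)(n_0/n) = n_1 n_0/n^2 =: v$.

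Second, again by exchangeability the distribution of each pair $(T_i, T_j)$ with $i \neq j$ is identical, so $\cov(T_i, T_j)$ takes a common value $c$; the covariance matrix therefore has the equicorrelation form $\cov(\bT) = (v - c)\bI_n + c\,\bone_n \bone_n^\T$. Rather than compute $c$ directly from the hypergeometric pair probability $\pr(T_i = T_j = 1) = n_1(n_1-1)/\{n(n-1)\}$, I would instead use the deterministic constraint $\bone_n^\T \bT = n_1$, which forces $\var(\bone_n^\T \bT) = 0$, i.e. $\bone_n^\T \cov(\bT)\bone_n = n v + n(n-1)c = 0$, and hence $c = -v/(n-1)$.

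Finally I would substitute back to obtain $\cov(\bT) = (v-c)\bI_n + c\,\bone_n\bone_n^\T = \frac{vn}{n-1}\left(\bI_n - n^{-1}\bone_n\bone_n^\T\right) = \frac{vn}{n-1}\,\bS_n$, and plugging in $v = n_1 n_0/n^2$ gives the claimed scalar $n_1 n_0/\{n(n-1)\}$. There is no genuine obstacle here; the only points needing care are justifying the equicorrelation structure rigorously from exchangeability rather than merely asserting it, and noting that the fixed-sum constraint is precisely what lets us bypass the more tedious direct hypergeometric evaluation of $c$. If a fully self-contained derivation is preferred, one can compute $c = n_1(n_1-1)/\{n(n-1)\} - n_1^2/n^2 = -n_1 n_0/\{n^2(n-1)\}$ directly and verify the matrix identity entrywise against the diagonal entry $(n-1)/n$ and off-diagonal entry $-1/n$ of $\bS_n$.
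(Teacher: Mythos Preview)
Your proposal is correct and takes essentially the same approach as the paper: compute $E(T_i)$, $\var(T_i)$, and $\cov(T_i,T_j)$ for $i\neq j$, then match these against the entries of $\frac{n_1 n_0}{n(n-1)}\bS_n$. Your use of the deterministic sum constraint $\bone_n^\T \bT = n_1$ to extract the off-diagonal covariance is a slightly slicker alternative to the direct hypergeometric count the paper implicitly relies on, but the structure of the argument is the same.
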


\begin{proof}[Proof of Lemma \ref{lemma::cov}]
The conclusions follow from 
$$
E(T_i) = \frac{n_1}{n}, \quad \var(T_i) = \frac{n_1n_0}{n^2}, \quad \cov(T_i, T_j) = - \frac{n_1n_0}{  n^2(n-1) },\quad  (i\neq j).
$$
\end{proof}

The projection matrix $\bS_n$ acts as a covariance operator as illustrated by the following lemma.

\begin{lemma}\label{lemma::s-cov}
Let $\bU_i, \bV_i \in \mathbb{R}^K$ be column vectors of length $K$. Define $\mathcal{U} = [\bU_1, \bU_2, \ldots, \bU_n]$ and $\mathcal{V} = [\bV_1, \bV_2, \ldots, \bV_n] \in \mathbb{R}^{K\times n}
$ as two matrices of dimension $K \times n$. 
If $
\bar{\bU} = n^{-1} \sumn \bU_i = n^{-1} \mathcal{U} \bone$ and $ \bar{\bV} = n^{-1} \sumn \bV_i = n^{-1} \mathcal{V} \bone,
$
then
\begin{eqnarray}
\label{eq::cov-operator}
  \mathcal{U} \bS_n \mathcal{V} ^\T       =  \sumn  (  \bU_i - \bar{\bU}   )  (\bV_i - \bar{\bV})^\T .
\end{eqnarray}
In particular, when $\bU_i=\bV_i$, 
$$
\mathcal{V} \bS_n \mathcal{V}^\T =   \sumn  (  \bV_i - \bar{\bV}   )  (\bV_i - \bar{\bV})^\T = (n-1) \mathcal{S}(\bV) .
$$
\end{lemma}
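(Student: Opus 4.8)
The plan is to exploit the algebraic structure of the centering matrix $\bS_n = \bI_n - n^{-1}\bone_n\bone_n^\T$ directly, avoiding any index-heavy summation manipulation. The cleanest route uses two elementary facts about $\bS_n$, both immediate from $\bone_n^\T\bone_n = n$: it is symmetric, $\bS_n^\T = \bS_n$, and idempotent, $\bS_n^2 = \bS_n$.

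First I would compute $\mathcal{U}\bS_n$ explicitly. Expanding gives $\mathcal{U}\bS_n = \mathcal{U} - n^{-1}\mathcal{U}\bone_n\bone_n^\T = \mathcal{U} - \bar{\bU}\bone_n^\T$, where I have used $\mathcal{U}\bone_n = \sumn \bU_i = n\bar{\bU}$. Reading off columns, this shows that the $i$-th column of $\mathcal{U}\bS_n$ is exactly the centered vector $\bU_i - \bar{\bU}$; the identical computation shows the $i$-th column of $\mathcal{V}\bS_n$ is $\bV_i - \bar{\bV}$. In other words, right-multiplication by $\bS_n$ column-centers a matrix.

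The main identity then follows in one line. I would write $\mathcal{U}\bS_n\mathcal{V}^\T = \mathcal{U}\bS_n\bS_n\mathcal{V}^\T = (\mathcal{U}\bS_n)(\mathcal{V}\bS_n)^\T$, using idempotency in the first equality and symmetry in the second. Since $\mathcal{U}\bS_n$ has columns $\bU_i - \bar{\bU}$ and $\mathcal{V}\bS_n$ has columns $\bV_i - \bar{\bV}$, and since the product of a $K\times n$ matrix with the transpose of another $K\times n$ matrix equals the sum of outer products of corresponding columns, this collapses to $\sumn (\bU_i - \bar{\bU})(\bV_i - \bar{\bV})^\T$, as claimed. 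The special case $\bU_i = \bV_i$ is then immediate, and recognizing the definition $\mathcal{S}(\bV) = (n-1)^{-1}\sumn (\bV_i - \bar{\bV})(\bV_i - \bar{\bV})^\T$ supplies the factor $n-1$.

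There is no genuine obstacle here; the only point requiring minor care is bookkeeping of the column-versus-row conventions, namely that $\mathcal{U}\mathcal{V}^\T = \sumn \bU_i\bV_i^\T$ because the columns of $\mathcal{U}$ are the $\bU_i$ while the rows of $\mathcal{V}^\T$ are the $\bV_i^\T$, together with tracking which factors are $K\times n$ versus $n\times n$ so that all products conform. As a fully equivalent alternative that avoids even invoking idempotency, I could instead expand both sides head-on: $\mathcal{U}\bS_n\mathcal{V}^\T = \mathcal{U}\mathcal{V}^\T - n\bar{\bU}\bar{\bV}^\T$ on one side and $\sumn (\bU_i - \bar{\bU})(\bV_i - \bar{\bV})^\T = \sumn \bU_i\bV_i^\T - n\bar{\bU}\bar{\bV}^\T$ on the other, and then match them. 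This verification is more pedestrian but equally short.
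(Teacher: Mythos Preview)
Your proposal is correct. Your primary route differs slightly from the paper's: the paper simply expands $\mathcal{U}\bS_n\mathcal{V}^\T = \mathcal{U}\mathcal{V}^\T - n^{-1}(\mathcal{U}\bone_n)(\mathcal{V}\bone_n)^\T = \sumn \bU_i\bV_i^\T - n\bar{\bU}\bar{\bV}^\T$ and identifies this with the right-hand side, which is exactly the ``pedestrian'' alternative you describe at the end. Your main argument instead exploits the projection structure of $\bS_n$ (symmetry and idempotency) to factor $\mathcal{U}\bS_n\mathcal{V}^\T = (\mathcal{U}\bS_n)(\mathcal{V}\bS_n)^\T$ and then read off the sum of centered outer products. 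Both are one-line computations of equal length; your version has the minor conceptual advantage of making explicit that $\bS_n$ acts as a column-centering operator, which is the real content of the lemma.
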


\begin{proof}
[Proof of Lemma \ref{lemma::s-cov}]
The left hand side of \eqref{eq::cov-operator} is equal to 
\begin{eqnarray*}
  \mathcal{U} \bS_n \mathcal{V}^\T  =  \mathcal{U}  \mathcal{V}^\T - 
 n^{-1}\left(  \mathcal{U} \bone_n \right) \left(  \mathcal{V} \bone_n \right )^\T
  = \sumn \bU_i \bV_i^\T - n^{-1}  (n\bar{\bU})(n\bar{\bV})^\T
  = \sumn \bU_i \bV_i^\T - n\bar{\bU} \bar{\bV}^\T,
\end{eqnarray*}
which is the same as the right hand side of \eqref{eq::cov-operator}.
\end{proof}

\paragraph{Theorem~\ref{thm::general-neyman}: A generalized, vector-outcome version of Neyman.}
To prove the generalized Neyman result, we bundle our vector potential outcomes into matrices and use the above lemmas to obtain their covariance matrix. The theorem is exact, no asymptotics. Using the FPCLT to show that the estimator has an approximately Normal distribution, allowing for classic testing and inference, is a separate, subsequent step.

\begin{proof}
[Proof of Theorem \ref{thm::general-neyman}]
Define 
$
\mathcal{V}_1 = [ \bV_1(1), \ldots, \bV_n(1) ]$ and $ \mathcal{V}_0 = [ \bV _1(0), \ldots, \bV_n(0)  ]
$
as the matrices of the potential outcomes.
Then the Neymanian simple difference in means estimator has the following representation:
\begin{eqnarray*}
\widehat{ \btau }_\bV &=&\bar{\bV}_1^{\obs} - \bar{\bV}_0^{\obs} \\
&=& \frac{1}{n_1} \sumn T_i  \bV_i(1) - \frac{1}{n_0} \sumn (1-T_i) \bV _i(0)\\
&=& \frac{1}{n_1} \mathcal{V}_1 \bT - \frac{1}{n_0} \mathcal{V}_0 \left( \bone - \bT \right) \\
&=& \left( \frac{\mathcal{V}_1}{n_1} + \frac{\mathcal{V}_0}{n_0} \right) \bT - \frac{1}{n_0} \mathcal{V}_0 \bone .
%\sumn T_i \left\{   \frac{ \bV_i(1)}{n_1} + \frac{  \bV_i(0)}{n_0} \right\}  - \frac{1}{n_0} \sumn \bV_i(0).
\end{eqnarray*}

Now the unbiasedness of $ \widehat{ \btau }_\bV $ follows from the linearity of the expectation and Lemma \ref{lemma::cov}.  
For the covariance, note the second term in the above is constant, and so is not involved.
Applying Lemmas \ref{lemma::cov} and \ref{lemma::s-cov}, we can obtain the covariance matrix of $\widehat{  \btau  }_\bV $:
\begin{eqnarray*}
\cov (  \widehat{  \btau  }_\bV  ) 
&=&  \left( \frac{\mathcal{V}_1}{n_1} + \frac{\mathcal{V}_0}{n_0} \right) 
 \cov(\bT) 
\left( \frac{\mathcal{V}_1}{n_1} + \frac{\mathcal{V}_0}{n_0} \right) ^\T \\
&=&  \frac{n_1 n_0}{n (n-1)}   \left( \frac{\mathcal{V}_1}{n_1} + \frac{\mathcal{V}_0}{n_0} \right) 
\bS_n
\left( \frac{\mathcal{V}_1}{n_1} + \frac{\mathcal{V}_0}{n_0} \right) ^\T \\
&=&  \frac{n_1 n_0}{n (n-1)}   \left(    \frac{1}{n_1^2}  \mathcal{V}_1 \bS_n \mathcal{V}_1^\T  
+ \frac{1}{n_0^2} \mathcal{V}_0 \bS_n \mathcal{V}_0 ^\T
+ \frac{1}{n_1n_0} \mathcal{V}_0 \bS_n \mathcal{V}_1^\T
+  \frac{1}{n_1 n_0} \mathcal{V}_1 \bS_n \mathcal{V}_0^\T
\right)\\
&=& \frac{n_0}{nn_1} \mathcal{S} \{  \bV(1) \} + \frac{n_1}{nn_0} \mathcal{S} \{  \bV(0) \}
+ \frac{1}{n(n-1)}  (\mathcal{V}_0 \bS_n \mathcal{V}_1^\T + \mathcal{V}_1 \bS_n \mathcal{V}_0 ^\T ).
\end{eqnarray*}
%\cmntM{Isn't the third term above some sort of covariance, and can we simplify if we look at it in those terms?}
To simplify the third term, we use the fact $ \ba \bb^\T + \bb \ba^\T = \ba\ba^\T  + \bb\bb^\T - (\ba-\bb)(\ba-\bb)^\T$ for two column vectors $\ba$ and $\bb$, we have
\begin{eqnarray*}
&&\{   \bV_i(1) - \bar{ \bV}(1)    \}  \{   \bV_i(0) -   \bar{\bV}(0)    \} ^\T 
+ \{   \bV_i(0) - \bar{ \bV}(0)    \}  \{   \bV_i(1) -   \bar{  \bV}(1)    \} ^\T \\
&=& \{   \bV_i(1) - \bar{\bV} (1)    \} \{   \bV_i(1) - \bar{ \bV}(1)    \} ^\T
+ \{   \bV_i(1) - \bar{\bV}(1)   \} \{   \bV_i(1) - \bar{ \bV}(1)    \} ^\T   \\
&&
- \{   \bV_i(1) - \bV_i(0)  - \bar{\bV}(1)   +  \bar{ \bV}(0)  \}
\{   \bV_i(1) - \bV_i(0)  - \bar{\bV}(1)   +  \bar{ \bV}(0)  \} ^\T .
\end{eqnarray*}
Summing over $i=1, \ldots, n$ and applying Lemma \ref{lemma::s-cov}, we have
$$
\frac{ \mathcal{V}_0 \bS_n \mathcal{V}_1 ^\T  }{n-1} + \frac{  \mathcal{V}_1 \bS_n \mathcal{V}_0 ^\T }{n-1}
= \mathcal{S} \{  \bV(1) \} + \mathcal{S} \{  \bV(0) \}   - \mathcal{S}\{ \bV(1) - \bV(0) \}. 
$$
Therefore, the covariance of $ \widehat{ \btau }_\bV$ can be simplified as:
\begin{eqnarray*}
\cov(\widehat{ \btau }_\bV) 
&=&  
\frac{n_0}{nn_1} \mathcal{S} \{  \bV(1) \} + \frac{n_1}{n n_0} \mathcal{S} \{  \bV(0) \}
+ \frac{1}{n}\left[   \mathcal{S} \{  \bV(1) \} + \mathcal{S} \{  \bV(0) \}   - \mathcal{S}\{ \bV(1) - \bV(0) \} \right]\\
&=&\frac{   \mathcal{S} \{  \bV(1) \}   }{ n_1 } + 
 \frac{   \mathcal{S} \{  \bV(0) \}   }{ n_0 }  - 
  \frac{   \mathcal{S} \{  \bV(1) - \bV(0) \}   }{n } . 
\end{eqnarray*}
\end{proof}

\paragraph{Theorem~\ref{thm::randomization-inference}: Behavior of $\widehat{\bbeta}_{\RI}$.}
To show properties of $\widehat{\bbeta}_{\RI}$ we express the systematic variation as a vector of new potential outcomes of the original outcome scaled by the different covariates of interest.
This allows for immediate use of Theorem~\ref{thm::general-neyman}.

\begin{proof}
[Proof of Theorem \ref{thm::randomization-inference}.]
Because $\widehat{\bS }_{xt}$ is the sample mean for $\{ \bX_i Y_i^\obs   : T_i = t, i=1,\ldots, n\} = \{ \bX_i Y_i(t)   : T_i = t, i=1,\ldots, n\}$, it is unbiased for the population mean $\bS_{xt}$. Thus, the estimator
$
\widehat{\bbeta}_{\RI} 
$
is also unbiased for $\bbeta$ as $\bS_{xx}^{-1}$ is fixed and the expectation is linear. Its sampling covariance over all possible randomizations is
$$
\cov(   \widehat{ \bbeta }_{\RI} ) = \bS _{xx}^{-1}  \cov(  \widehat{ \bS }_{x1} - \widehat{ \bS }_{x0} ) \bS_{xx}^{-1} .
$$
Therefore, we need only to obtain the covariance of
$$  
\widehat{ \bS }_{x1} - \widehat{ \bS }_{x0}  =  \frac{1}{n_1}   \sumn T_i  \bX_iY_i^{\obs } -  
 \frac{1}{n_0}   \sumn (1-T_i) \bX_iY_i^{\obs },
$$ 
which is the difference between the sample means of $\left\{  \bX_i Y_i(1)  : i=1,\ldots, n   \right\}$ and $\left\{  \bX_iY_i(0)  : i=1,\ldots,N   \right\}$ under treatment and control. Viewing $\bX_i Y_i^\obs$ as a vector outcome in a completely randomized experiment, we can apply Theorem \ref{thm::general-neyman} to obtain  
$$
\cov(\widehat{\bS}_{x1} - \widehat{\bS}_{x0}) =\frac{  \S\{\bX   Y(1)  \} }{ n_1} +\frac{  \S\{  \bX   Y(0) \} } { n_0  } -  \frac{ \S( \bX \tau ) } { n},
$$
which completes the proof.
\end{proof}

\paragraph{Theorem~\ref{theorem::ols}: Behavior of $\widehat{\bbeta}_{\OLS}$.}

We first use the well-known fact that the estimate from a OLS model with treatment fully interacted with covariates is equivalent to separate regressions of outcome onto covariates for the control and treatment groups.
This means we can obtain $\widehat{\bgamma}_\OLS$ by running a regression of $Y^{\obs}$ onto $\bX$ using the control group data, and $\widehat{\bgamma + \bbeta}_\OLS$ by running regression of $Y^{\obs}$ onto $\bX$ using the treatment group data, giving estimated coefficients of
\[ \widehat{\bgamma}_{\OLS} = \widehat{\bS}_{xx,0}^{-1}  \widehat{\bS}_{x0} \]
and 
\[ \widehat{\bbeta}_{\OLS} = \widehat{\bS}_{xx,1}^{-1}  \widehat{\bS}_{x1} -  \widehat{\bS}_{xx,0}^{-1}  \widehat{\bS}_{x0} . \]
As a quick heuristic argument for this, consider that the maximization problem for the interacted model will separate into two components, one for each group.  Then re-parameterize to get the above.

We now prove the properties of $\widehat{\bbeta}_{\OLS}$.
Here we have to use asymptotics for the entire theorem, unlike the case of $\widehat{\bbeta}_{\RI}$, where the mean and covariance are exact and the asymptotics are only needed for the asymptotic normality of the estimator.

\begin{proof}[Proof of Theorem \ref{theorem::ols}.]
First expand the difference of $\widehat{\bbeta}_\OLS$ and $\bbeta$ as
$$
\widehat{\bbeta}_\OLS - \bbeta =   \widehat{\bS}_{xx,1}^{-1}  ( \widehat{\bS}_{x1} -  \widehat{\bS}_{xx,1}  \bgamma_1 ) 
-  \widehat{\bS}_{xx,0}^{-1}  ( \widehat{\bS}_{x0} -  \widehat{\bS}_{xx,0}  \bgamma_0 ) ,
$$
This will be close to the related quantity of
\begin{eqnarray}
\Delta =   \bS_{xx}^{-1}  ( \widehat{\bS}_{x1} -  \widehat{\bS}_{xx,1} \gamma_1 ) 
-  \bS_{xx}^{-1}  ( \widehat{\bS}_{x0} -  \widehat{\bS}_{xx,0} \bgamma_0 ) .
\label{eq::ols-tilde} 
\end{eqnarray}
For the above to make sense and hold, we here need our asymptotic framework.
In particular, we need the associated moment conditions described in the main text.
We next observe that the difference between $\widehat{\bbeta}_\OLS - \bbeta$ and $\Delta$ is of higher order, because
\begin{eqnarray}
(\widehat{\bbeta}_\OLS - \beta) - \Delta
&=&
( \widehat{\bS}_{xx,1}^{-1} -  \bS_{xx}^{-1} )  ( \widehat{\bS}_{x1} -  \widehat{\bS}_{xx,1}  \bgamma_1 ) 
-
( \widehat{\bS}_{xx,0}^{-1}   -  \bS_{xx}^{-1} )  ( \widehat{\bS}_{x0} -  \widehat{\bS}_{xx,0} \bgamma_0 )  \label{eq::fourterms} \\
&=& O_P(n^{-1/2}) O_P(n^{-1/2}) - O_P(n^{-1/2}) O_P(n^{-1/2})  = O_P(n^{-1}), \label{eq::higher-order}
\end{eqnarray}
following from the FPCLT for the four terms in \eqref{eq::fourterms}. This is an argument commonly used in the survey sampling literature for ratio estimators \citep{cochran::1977}.

We next focus on the asymptotic distribution of $\Delta$, because the asymptotic distribution of $\widehat{\bbeta}_\OLS - \bbeta$ will be the same.
Further simplify \eqref{eq::ols-tilde} as
\begin{eqnarray}\label{eq::representation}
\Delta = 
 \bS_{xx}^{-1}  \left[ \frac{1}{n_1} \sum_{i=1}^n T_i \bX_i e_i(1)  - \frac{1}{n_0} \sum_{i=1}^n (1-T_i) \bX_i e_i(0)   \right],
\end{eqnarray}
where $e_i(1) = Y_i(1) - \bX_i^\T \bgamma_1$ and $e_i(0) = Y_i(0) - \bX_i^\T \bgamma_0$ are the residual potential outcomes.
(To see the above, note, for example, that both $\widehat{\bS}_{x1}$ and $\widehat{\bS}_{xx,1}$ are sums over the treatment units, and we can factor out an $\bX_i$ to get $\bX_i$ times the difference in the $Y_i$ and predicted $Y_i$.)

%why was the following clause here?  Needed?  -luke   -->
%, satisfying $e_i(1) - e_i(0) = \varepsilon_i$ and thus $ \sumn \bX_i \{ e_i(1) - e_i(0) \}/n = \bS_{x\varepsilon} = 0.$ 
Applying Theorem \ref{thm::general-neyman} to the vector outcome $\bX e$, we obtain the covariance matrix of $\Delta$, to which $\widehat{\bbeta}_\OLS - \beta$ converges to due to \eqref{eq::higher-order}.
The asymptotic normality follows from the representation \eqref{eq::representation} and the FPCLT. 
\end{proof}

\paragraph{Theorem~\ref{thm::bounds_for_s_tau_tau}: Bounds for $R^2_\tau$.}
To prove Theorem \ref{thm::bounds_for_s_tau_tau}, we need to invoke the following Fr\'echet--Hoeffding inequality \citep{hoeffding::1941, frechet::1951,heckman::1997, aronow::2014}.

\begin{lemma}
\label{lemma::frechet-hoeffding}
If we know only the marginal distributions of two random variables $X\sim F_X(x)$ and $Y\sim F_Y(y)$, then $E(XY)$ can be sharply bounded by
$$
\int_0^1 F^{-1}_X(u) F^{-1}_Y(1-u) \d u 
\leq E(XY) \leq 
\int_0^1 F^{-1}_X(u) F^{-1}_Y(u) \d u.
$$
\end{lemma}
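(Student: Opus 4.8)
The plan is to reduce the problem to bounding $\cov(X,Y)$, observing that $E(XY) = \cov(X,Y) + E(X)E(Y)$ and that the product of means $E(X)E(Y)$ is completely determined by the marginals $F_X$ and $F_Y$; only the covariance term depends on the unknown coupling. The workhorse will be Hoeffding's covariance identity: for any pair $(X,Y)$ with joint distribution function $H(x,y)$ and marginals $F_X,F_Y$,
\[
\cov(X,Y) = \int_{-\infty}^{\infty} \int_{-\infty}^{\infty} \left\{ H(x,y) - F_X(x) F_Y(y) \right\} \, \d x \, \d y .
\]
I would establish this by Fubini applied to an independent-copy representation of $(X-X')(Y-Y')$, or simply cite it, assuming the integrability needed for Fubini to apply.

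The next step is to bound the integrand pointwise using the elementary Fr\'echet--Hoeffding inequalities on the joint distribution function itself,
\[
\max\{ F_X(x) + F_Y(y) - 1, \, 0 \} \le H(x,y) \le \min\{ F_X(x), \, F_Y(y) \},
\]
where the upper bound is immediate from $\pr(X\le x, Y\le y) \le \min\{\pr(X\le x),\pr(Y\le y)\}$ and the lower bound is the Bonferroni inequality $\pr(X\le x,Y\le y) \ge \pr(X\le x)+\pr(Y\le y)-1$. Substituting these two pointwise bounds into Hoeffding's identity and integrating yields matching integral bounds on $\cov(X,Y)$, and adding back $E(X)E(Y)$ converts them into bounds on $E(XY)$.

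It then remains to show the bounds are \emph{sharp} and to put them in closed form, which I would do with the quantile coupling. Taking $U \sim \text{Uniform}(0,1)$, the comonotone pair $(F_X^{-1}(U), F_Y^{-1}(U))$ has joint distribution function exactly $\min\{F_X,F_Y\}$, so it attains the upper bound and gives
\[
E(XY) = \int_0^1 F_X^{-1}(u) F_Y^{-1}(u) \, \d u ;
\]
symmetrically, the countermonotone pair $(F_X^{-1}(U), F_Y^{-1}(1-U))$ realizes $\max\{F_X+F_Y-1,0\}$ and yields $\int_0^1 F_X^{-1}(u) F_Y^{-1}(1-u)\,\d u$, attaining the lower bound. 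The main obstacle is precisely this attainability step: verifying that the two extremal couplings realize the Fr\'echet bounds on $H$ (and hence that the inequalities on $E(XY)$ cannot be improved), together with the integrability bookkeeping that legitimizes Hoeffding's identity and the interchange of integration. In the finite-population setting where $F_X,F_Y$ are empirical distributions on $n$ equally weighted points, all of this collapses to the rearrangement inequality: any coupling writes $E(XY)$ as $\tfrac{1}{n}\sum_i x_{(i)} y_{\pi(i)}$, which is maximized by the identity permutation (ranks aligned) and minimized by the reversing permutation (ranks reversed), matching the two quantile integrals and making sharpness transparent.
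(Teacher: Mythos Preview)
Your proof proposal is correct and follows the standard route to this classical inequality: Hoeffding's covariance identity combined with the pointwise Fr\'echet--Hoeffding bounds on the joint distribution function, with sharpness witnessed by the comonotone and countermonotone quantile couplings. The finite-population reduction to the rearrangement inequality is also apt given how the lemma is used downstream.

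The paper, however, does not prove this lemma at all. It simply states it as a known result and cites \citet{hoeffding::1941}, \citet{frechet::1951}, \citet{heckman::1997}, and \citet{aronow::2014}. So there is no ``paper's own proof'' to compare against here; your proposal supplies a proof where the paper relies on citation. If anything, your sketch is more than what is needed to match the paper's treatment --- a one-line citation would have sufficed --- but as a self-contained argument it is sound, and the attention you pay to attainability and to the finite-population special case is exactly what the downstream applications (Theorem~\ref{thm::bounds_for_s_tau_tau} and its corollaries) require.
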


Lemma \ref{lemma::frechet-hoeffding} immediately implies the following bound for $\var(X-Y)$ if $E(X-Y)=0$.

\begin{lemma}
\label{lemma::variance-bound}
If we know only the marginal distributions $X\sim F_X(x), Y\sim F_Y(y)$ and $E(X-Y)=0$, then $\var(X-Y)$ can be sharply bounded by
$$
\int_0^1 \{ F_X^{-1}(u) - F_Y^{-1}(u)   \}^2 \d u
\leq \var(X-Y) \leq
\int_0^1 \{ F_X^{-1}(u) - F_Y^{-1}(1-u)   \}^2 \d u
$$
\end{lemma}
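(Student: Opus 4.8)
The plan is to treat this as an immediate corollary of Lemma~\ref{lemma::frechet-hoeffding}, exploiting the fact that the mean-zero constraint turns $\var(X-Y)$ into an affine function of $E(XY)$. First I would expand, using $E(X-Y)=0$,
$$
\var(X-Y) = E\{(X-Y)^2\} = E(X^2) - 2E(XY) + E(Y^2).
$$
The outer two terms depend only on the marginals $F_X$ and $F_Y$, so they are fixed constants; the only quantity sensitive to the (unknown) joint distribution is $E(XY)$. Since $\var(X-Y)$ is a strictly decreasing affine function of $E(XY)$, sharp bounds on $E(XY)$ translate directly into sharp bounds on $\var(X-Y)$, with the maximum of $E(XY)$ giving the minimum of the variance and vice versa.

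Next I would substitute the Fréchet--Hoeffding bounds from Lemma~\ref{lemma::frechet-hoeffding}. The upper bound $E(XY) \le \int_0^1 F_X^{-1}(u) F_Y^{-1}(u)\,\d u$ yields the lower bound on the variance, and there I would rewrite the constant terms as $E(X^2)=\int_0^1 \{F_X^{-1}(u)\}^2\,\d u$ and $E(Y^2)=\int_0^1 \{F_Y^{-1}(u)\}^2\,\d u$ and complete the square:
$$
\var(X-Y) \ge \int_0^1 \left[ \{F_X^{-1}(u)\}^2 - 2 F_X^{-1}(u)F_Y^{-1}(u) + \{F_Y^{-1}(u)\}^2 \right]\d u = \int_0^1 \{F_X^{-1}(u) - F_Y^{-1}(u)\}^2\,\d u.
$$
Symmetrically, the lower bound $E(XY) \ge \int_0^1 F_X^{-1}(u) F_Y^{-1}(1-u)\,\d u$ gives the upper bound on the variance. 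The one small bookkeeping step worth flagging is that to write the upper bound as a single squared difference I must re-express $E(Y^2) = \int_0^1 \{F_Y^{-1}(1-u)\}^2\,\d u$ via the change of variable $u \mapsto 1-u$, which is valid because the substitution only permutes the domain $[0,1]$ and leaves the integral unchanged.

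Finally I would address sharpness. This requires no new work: the sharpness of the two Fréchet--Hoeffding extremes is inherited from Lemma~\ref{lemma::frechet-hoeffding}, and the couplings that attain them are exactly the comonotone coupling $(F_X^{-1}(U), F_Y^{-1}(U))$ and the countermonotone coupling $(F_X^{-1}(U), F_Y^{-1}(1-U))$ for $U \sim \Unif(0,1)$. Both have the prescribed marginals $F_X$ and $F_Y$, so they are admissible joint distributions, and since $\var(X-Y)$ is a monotone function of $E(XY)$ the extremizers of $E(XY)$ are precisely the extremizers of the variance. The main (and only mild) obstacle is cosmetic rather than conceptual: ensuring that the affine relationship is oriented correctly so that the \emph{upper} Fréchet bound on $E(XY)$ maps to the \emph{lower} bound on $\var(X-Y)$, and keeping the $u$ versus $1-u$ arguments consistent between the cross term and the $E(Y^2)$ term when completing each square.
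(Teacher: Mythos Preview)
Your proposal is correct and follows essentially the same approach as the paper: expand $\var(X-Y)=E(X^2)+E(Y^2)-2E(XY)$, express $E(X^2)$ and $E(Y^2)$ as quantile integrals (including the $u\mapsto 1-u$ substitution for $E(Y^2)$ in the upper-bound case), apply the Fr\'echet--Hoeffding bounds of Lemma~\ref{lemma::frechet-hoeffding} to $E(XY)$, and complete the square. Your added remarks on the orientation of the affine map and on sharpness via the comonotone/countermonotone couplings are a welcome elaboration but do not depart from the paper's argument.
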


\begin{proof}
[Proof of Lemma \ref{lemma::variance-bound}.]
The variance $\var(X-Y)$ can be decomposed as
\[ \var(X-Y) = E(X-Y)^2 = E(X^2)+E(Y^2)-2E(XY), \]
which depends on the following three terms:
\begin{eqnarray*}
E(X^2) =& \int x^2   \d F_X(x) &= \int_0^1 \{  F_X^{-1}(u)] \} ^2 \d u,\\
E(Y^2) =&  \int_0^1 \{  F_Y^{-1}(u)\} ^2 \d u &= \int_0^1  \{  F_Y^{-1}(1-u) \}^2 \d u,\\
\int_0^1 F^{-1}_X(u) F^{-1}_Y(1-u) \d u 
\leq& E(XY) &\leq 
\int_0^1 F^{-1}_X(u) F^{-1}_Y(u) \d u.
\end{eqnarray*}
Plug the above expressions into the variance of $X-Y$ to obtain the desired bounds.
\end{proof}

Applying Lemma \ref{lemma::variance-bound}, we can easily prove Theorem \ref{thm::bounds_for_s_tau_tau}.

\begin{proof}
[Proof of Theorem \ref{thm::bounds_for_s_tau_tau}.]
Because $S_{\tau\tau} = S_{\delta\delta} + S_{\varepsilon\varepsilon}$, we need only to bound $S_{\varepsilon\varepsilon}$, which is the finite population variance of 
$
\varepsilon_i = \{  Y_i(1) - \bX_i^\T\bgamma_1 \} -  \{  Y_i(0)  - \bX_i^\T \bgamma_0 \}  = e_i(1) - e_i(0). 
$ 
We can identify the marginal distributions of $\{   e_i(1)  : i=1,\ldots, n\} $ and $\{ e_i(0)   : i=1,\ldots, n\}$, and also know that $n^{-1}\sumn \varepsilon_i = 0$. Therefore, the bounds in Lemma \ref{lemma::variance-bound} imply the bounds in Theorem \ref{thm::bounds_for_s_tau_tau}.
\end{proof}

\paragraph{Theorem~\ref{thm::sensitivity analysis}: Sensitivity analysis.}

\begin{proof}
[Proof of Theorem \ref{thm::sensitivity analysis}.]
The joint distribution of $(U_1, U_0)$ is 
\begin{eqnarray*}
C(u_1, u_0) &=& P(U_1\leq u_1, U_0\leq u_0) \\
&=& \rho P(U_0\leq u_1, U_0\leq u_0) + (1-\rho) P(V_0\leq u_1, U_0\leq u_0) \\
&=& \rho \min(u_1, u_0) + (1-\rho)u_1 u_0.
\end{eqnarray*}
Therefore, the distribution function $C(u_1, u_0) $ is a weighted average of $\min(u_1,u_0) = C_R(u_1, u_0)$ and $u_1u_0 = C_I(u_1,u_0)$, i.e., the joint distributions when $U_1=U_0$ and $U_1\indep U_0$, respectively.

According to \citet[][Theorem 5.1.6]{nelsen::2007}, Spearman's rank correlation coefficient between $e(1)$ and $e(0)$ is
\begin{eqnarray*}
12 \int_{0}^1 \int_0^1 \{ C(u_1, u_0) - u_1 u_0\}  \d u_1 \d u_0&=& 12\rho \int_{0}^1 \int_0^1 \{  \min(u_1, u_0) - u_1u_0  \} \d u_1 \d u_0 \\
&=& 12\rho \left(   2\int_0^1 \d u_1\int_0^{u_1} u_0 \d u_0  - \frac{1}{4}  \right) \\
&=&12\rho (1/3 - 1/4) = \rho.
\end{eqnarray*}

To complete the proof of the theorem, we need only to show that the covariance between $e(1)$ and $e(0)$ is linear in $\rho$, which follows from
\begin{eqnarray*}
&& \int_0^1 \int_0^1 F_1^{-1}(u_1) F_0^{-1}(u_0) \d C(u_1,u_0)  \\
&=& \rho  \int_0^1 \int_0^1 F_1^{-1}(u_1) F_0^{-1}(u_0)  \d C_R(u_1,u_0)
+\rho  \int_0^1 \int_0^1 F_1^{-1}(u_1) F_0^{-1}(u_0)  \d C_I(u_1,u_0) \\
&=& \rho  \int_0^1  F_1^{-1}(u) F_0^{-1}(u) \d u + 
(1-\rho )\int_0^1  F_1^{-1}(u)  \d u\int_0^1   F_0^{-1}(u) \d u.
\end{eqnarray*}
\end{proof}

\paragraph{Theorem~\ref{thm::randomization-inference-noncompliance}: Extending to non-compliance.}

Theorem~\ref{thm::randomization-inference-noncompliance} shows how to estimate the outcome-to-covariate relationships of the Compliers by estimating different aggregate covariance relationships across all the strata for different observed groups and then taking differences.
Due to the exclusion restriction for the Never Takers and Always Takers, this gives our desired relationships for the Compliers only.

First, a small bit of notation of, due to the exclusion restrictions for Never Takers and Always Takers, defining the population covariance between $\bX$ and $Y(1)=Y(0)$ within stratum $U=a$ and $U=n$ as
$$
\bS_{x.,u} = \frac{1}{n_u} \sum_{i=1}^n  I_{(U_i=u)} \bX_i Y_i(1)
= \frac{1}{n_u} \sum_{i=1}^n  I_{(U_i=u)} \bX_i Y_I(0), \quad (u=a,n).
$$

\begin{proof}
[Proof of Theorem \ref{thm::randomization-inference-noncompliance}.]
We first create an estimator for $\bS_{xx,c}$.
From the observed data with $(T_i,D_i)=  (1,1)$, we have
\begin{eqnarray}
E\left\{  \frac{1}{n_1} \sum_{i=1}^n  T_i D_i  \bX_i \bX_i^\T     \right\}  
&=&E\left\{  \frac{1}{n_1}  \sum_{i=1}^n  T_i I_{(U_i=a)} \bX_i \bX_i^\T    
+ \frac{1}{n_1}  \sum_{i=1}^n  T_i I_{(U_i=c)} \bX_i \bX_i^\T    \right\}   \nonumber \\
&=& \pi_a \bS_{xx,a} + \pi_c \bS_{xx,c} . 
\label{eq::xx-11}
\end{eqnarray}
Similar to (\ref{eq::xx-11}), we have 
\begin{eqnarray}
E\left\{   \frac{1}{n_1} \sum_{i=1}^n T_i(1-D_i) \bX_i  \bX_i^\T     \right\} &=& \pi_n  \bS_{xx, n}  , 
\label{eq::xx-10}
\\
E\left\{    \frac{1}{n_0}    \sum_{i=1}^n (1-T_i) D_i  \bX_i \bX_i^\T    \right\} &=& \pi_a  \bS_{xx, a}  ,
\label{eq::xx-01}
\\
E\left\{   \frac{1}{n_0}    \sum_{i=1}^n  (1-T_i)(1-D_i) \bX_i \bX_i^\T     \right\} &=& \pi_n \bS_{xx,n} + \pi_c \bS_{xx,c} .
\label{eq::xx-00}
\end{eqnarray}
Subtracting the left sides of (\ref{eq::xx-01}) from (\ref{eq::xx-11}), or subtracting the left sides of (\ref{eq::xx-10}) from (\ref{eq::xx-00}), give unbiased estimators for $\pi_c \bS_{xx,c}.$

Second, analogous to the $\bS_{xx,c}$, we consider the sample covariances between $\bX$ and $Y^\obs$ to obtain estimators for $\bS_{x1,c}$ and $\bS_{x0,c}$.
From the observed data with $(T_i,D_i)=  (1,1)$, we have 
\begin{eqnarray}
E\left\{  \frac{1}{n_1}    \sum_{i=1}^n T_i D_i  \bX_i Y_i^\obs    \right\}  
&=& E\left\{  \frac{1}{n_1}   \sum_{i=1}^n T_i I_{(U_i=a)}  \bX_i Y_i(1)  
+ \frac{1}{n_1}    \sum_{i=1}^n T_i I_{(U_i=c)}  \bX_i Y_i(1)    \right\}
\nonumber \\
&=& \pi_a \bS_{x.,a} + \pi_c \bS_{x1,c}.
\label{eq::xy-11}
\end{eqnarray}
Similar to (\ref{eq::xy-11}), we have 
\begin{eqnarray}
E\left\{   \frac{1}{n_1}   \sum_{i=1}^n T_i(1-D_i) \bX_i Y_i^\obs    \right\} = \pi_n  \bS_{x., n}  , \label{eq::xy-10}
\\
E\left\{    \frac{1}{n_0}    \sum_{i=1}^n (1-T_i) D_i  \bX_i   Y_i^\obs     \right\} = \pi_a  \bS_{x., a}  ,  \label{eq::xy-01}
\\
E\left\{    \frac{1}{n_0}  \sum_{i=1}^n (1-T_i)(1-D_i) \bX_i    Y_i^\obs     \right\} = \pi_n \bS_{x.,n} + \pi_c \bS_{x0,c}  .  \label{eq::xy-00}
\end{eqnarray}
Subtracting (\ref{eq::xy-01}) from (\ref{eq::xy-11}), and subtracting (\ref{eq::xy-10}) from (\ref{eq::xy-00}), we obtain the results in (\ref{eq::unbiased-xy}).
\end{proof}

\paragraph{Corollary~\ref{coro::ri-nomcompliance}: Behavior of $\widehat{\bbeta}_{c,\RI}$.}
Theorem~\ref{thm::randomization-inference-noncompliance} shows how to obtain unbiased estimates of the components of our estimator, which we can then plug in to obtain a consistent estimator of $\bbeta_c$.
We next show how this plug-in estimator behaves.

\begin{proof}[Proof of Corollary \ref{coro::ri-nomcompliance}]

First we write
\begin{eqnarray*}
\widehat{\bbeta}_{c,\RI} - \bbeta_c &=& 
(\widehat{\bS}_{xx,11} - \widehat{\bS}_{xx,01})^{-1}    \{   \widehat{\bS}_{x1,11} - \widehat{\bS}_{x0,01}  -  (\widehat{\bS}_{xx,11} - \widehat{\bS}_{xx,01}) \bgamma_{1c}  \} \\
&& -
 (\widehat{\bS}_{xx,00} - \widehat{\bS}_{xx,10})^{-1}    \{   \widehat{\bS}_{x0,00} - \widehat{\bS}_{x1,10}  -   (\widehat{\bS}_{xx,00} - \widehat{\bS}_{xx,10}) \bgamma_{0c}  \},
\end{eqnarray*}
second we introduce
\begin{eqnarray*}
\Delta_c &=& 
(  \pi_{c} \bS_{xx,c}  )^{-1}    \{   \widehat{\bS}_{x1,11} - \widehat{\bS}_{x0,01}  -  (\widehat{\bS}_{xx,11} - \widehat{\bS}_{xx,01}) \bgamma_{1c}  \} \\
&&-
(  \pi_{c} \bS_{xx,c}  )^{-1}     \{   \widehat{\bS}_{x0,00} - \widehat{\bS}_{x1,10}  -   (\widehat{\bS}_{xx,00} - \widehat{\bS}_{xx,10}) \bgamma_{0c}  \},
\end{eqnarray*}
third we observed that the difference between $\widehat{\bbeta}_{c,\RI} - \bbeta_c$ and $\Delta_c$ has higher order following the same argument as \eqref{eq::higher-order}. 
Therefore, we need only to find the asymptotic distribution of $\Delta_c$.

Simple algebra gives
\begin{eqnarray*}
\Delta_c &=&
(  \pi_{c} \bS_{xx,c}  )^{-1}  \Big [ 
\frac{1}{n_1} \sumn T_iD_i \bX_i Y_i(1) - \frac{1}{n_0} \sumn (1-T_i)D_i \bX_iY_i(0)  \\
&& - \frac{1}{n_1} \sumn T_i D_i \bX_i \bX_i^\T \bgamma_{c1} + \frac{1}{n_0} \sumn (1-T_i)D_i \bX_i \bX_i^\T \bgamma_{c1} \\
&& - \frac{1}{n_0} \sumn (1-T_i)(1-D_i) \bX_i Y_i(0) + \frac{1}{n_1}\sumn T_i(1-D_i)\bX_i Y_i(1) \\
&& + \frac{1}{n_0} \sumn (1-T_i)(1-D_i) \bX_i \bX_i^\T \bgamma_{c0} - \frac{1}{n_1} \sumn T_i(1-D_i) \bX_i \bX_i^\T \bgamma_{c0}
\Big]\\
&=&
(  \pi_{c} \bS_{xx,c}  )^{-1}  \Big [ 
\frac{1}{n_1} \sumn T_i  I_{(U_i=a)} \bX_i Y_i(1)  + \frac{1}{n_1} \sumn T_i  I_{(U_i=c)} \bX_i Y_i(1) - \frac{1}{n_0} \sumn (1-T_i)I_{(U_i=a)} \bX_iY_i(0)  \\
&& - \frac{1}{n_1} \sumn T_i I_{(U_i=a)}  \bX_i \bX_i^\T \bgamma_{c1} - \frac{1}{n_1} \sumn T_i I_{(U_i=c)}  \bX_i \bX_i^\T \bgamma_{c1} 
+ \frac{1}{n_0} \sumn (1-T_i) I_{(U_i=a)} \bX_i \bX_i^\T \bgamma_{c1} \\
&& - \frac{1}{n_0} \sumn (1-T_i) I_{(U_i=n)} \bX_i Y_i(0) - \frac{1}{n_0} \sumn (1-T_i) I_{(U_i=c)} \bX_i Y_i(0)  
+  \frac{1}{n_1}\sumn T_i I_{(U_i=n)} \bX_i Y_i(1) \\
&& + \frac{1}{n_0} \sumn (1-T_i) I_{(U_i=n)} \bX_i \bX_i^\T \bgamma_{c0}  + \frac{1}{n_0} \sumn (1-T_i) I_{(U_i=c)} \bX_i \bX_i ^\T \bgamma_{c0}
- \frac{1}{n_1} \sumn T_i I_{(U_i=n)} \bX_i \bX_i^\T \bgamma_{c0}
\Big]\\
&=& 
(  \pi_{c} \bS_{xx,c}  )^{-1}  \Big \{ 
\frac{1}{n_1} \sumn T_i X_i   \left[ I_{(U_i=a)} ( Y_i(1) - \bX_i^\T \bgamma_{c1}  ) +  I_{(U_i=n)}  (Y_i(1) - \bX_i ^\T \bgamma_{c0}  ) + I_{(U_i=c)} ( Y_i(1) - \bX_i^\T \bgamma_{c1}  ) \right]  \\
&& - \frac{1}{n_0} \sumn (1-T_i) X_i   \left [ I_{(U_i=a)} ( Y_i(0) - \bX_i^\T \bgamma_{c1}  ) +  I_{(U_i=n)}  (Y_i(0) - \bX_i ^\T \bgamma_{c0}  ) + I_{(U_i=c)} ( Y_i(0) - \bX_i^\T \bgamma_{c0}  ) \right]  \Big\}  .
\end{eqnarray*}
According to the definitions of the residual potential outcomes $e_i'(1)$ and $e_i'(0)$ in the main text, the above formula reduces to 
\begin{eqnarray}\label{eq::represent-ri-c}
\widetilde{\beta}_{c,\RI} - \beta_c 
= (  \pi_{c} \bS_{xx,c}  )^{-1} \left[  
\frac{1}{n_1} \sumn T_i   \bX_i e_i ' (1) - \frac{1}{n_0} \sumn (1-T_i)   \bX_i e_i' (0)  \right].
\end{eqnarray} 
The representation in \eqref{eq::represent-ri-c} implies the asymptotic covariance matrix according to Theorem \ref{thm::general-neyman} and the asymptotic normality of $\widetilde{\bbeta}_{c,\RI}$ according to the FPCLT. 
\end{proof}

\paragraph{Theorem~\ref{thm::TSLS}: Behavior of $\widehat{\bbeta}_\TSLS$.}

While the amount of notation and matrix algebra is considerably more in scope, the overall structure of the proof follows the earlier one for the OLS estimator for the ITT.
In particular, we show the estimator asymptotically converges to a more tractable version that has a fixed portion, and then use the usual covariance argument on the remaining terms.
Before doing this, we first show the probability limits of the estimator by working through the matrix algebra.

\begin{proof}
[Proof of Theorem \ref{thm::TSLS}.]
First, we find the probability limits of the TSLS estimators:
\begin{eqnarray}
\begin{pmatrix}
\widehat{\bgamma}_\TSLS\\
\widehat{\bbeta}_\TSLS
\end{pmatrix}
&=&
\left\{  \frac{1}{n}   \sum_{i=1}^n
\begin{pmatrix}
\bX_i\\
T_i \bX_i
\end{pmatrix}
(\bX_i^\T, D_i \bX_i^\T)
\right\} ^{-1}
\left\{ \frac{1}{n} \sum_{i=1}^n
\begin{pmatrix}
\bX_i\\
T_i \bX_i
\end{pmatrix}
Y_i^\obs
\right\}  \nonumber  \\
&=&
\begin{pmatrix}
n^{-1} \sum_{i=1}^n \bX_i \bX_i^\T & n^{-1} \sum_{i=1}^n D_i \bX_i \bX_i^\T\\
n^{-1} \sum_{i=1}^n T_i \bX_i \bX_i^\T & n^{-1} \sum_{i=1}^n T_i D_i \bX_i \bX_i^\T
\end{pmatrix}^{-1}
\begin{pmatrix}
n^{-1} \sum_{i=1}^n \bX_i Y_i^\obs\\
n^{-1} \sum_{i=1}^n T_i \bX_i Y_i^\obs 
\end{pmatrix}  \nonumber \\
&\stackrel{P}{\longrightarrow} &
\begin{pmatrix}
\bA&\bB\\
\bC&\bD
\end{pmatrix}^{-1}
\begin{pmatrix}
\bG\\
\bH
\end{pmatrix}
.\label{eq::ABCDGH}
\end{eqnarray}
The above term $\bA$ is $\bA= \bS_{xx}$, and terms $(\bB,\bC,\bD,\bG,\bH)$ are the population limits of the sample quantities. 
We will find each of them. 
Term $\bB$ is
\begin{eqnarray*}
\bB&=& E\left\{   \frac{1}{n} \sum_{i=1}^n D_i \bX_i \bX_i^\T  \right\} 
= E\left\{   \frac{1}{n} \sum_{i=1}^n T_i D_i \bX_i \bX_i^\T  + \frac{1}{n} \sum_{i=1}^n (1-T_i) D_i \bX_i \bX_i^\T   \right\} \\
&=& E\left\{  \frac{1}{n} \sum_{i=1}^n T_i I_{(U_i = a)} \bX_i \bX_i^\T 
+ \frac{1}{n} \sum_{i=1}^n T_i I_{(U_i = c)} \bX_i \bX_i^\T 
+ \frac{1}{n} \sum_{i=1}^n (1-T_i) I_{(U_i = a)} \bX_i \bX_i^\T   \right\} \\
&=& p_1 \pi_a \bS_{xx,a} + p_1 \pi_c \bS_{xx,c} + p_0 \pi_a \bS_{xx,a} \\
&=& \pi_a \bS_{xx,a} + p_1 \pi_c \bS_{xx,c} .
 \end{eqnarray*}
Term $\bC$ is
$
\bC = E\left\{  n^{-1} \sum_{i=1}^n T_i \bX_i \bX_i^\T  \right\}
= p_1 \bS_{xx}.
$
Term $\bD$ is
\begin{eqnarray*}
\bD
&=& E\left\{  \frac{1}{n}\sum_{i=1}^n T_i D_i \bX_i \bX_i^\T \right\}
= E\left\{  \frac{1}{n} \sum_{i=1}^n T_i I_{(U_i = a)} \bX_i \bX_i^\T  + \frac{1}{n} \sum_{i=1}^n T_i I_{(U_i = c)} \bX_i \bX_i^\T \right\} \\
&=& p_1 \pi_a \bS_{xx,a} + p_1 \pi_c \bS_{xx,c}.
\end{eqnarray*}
Term $\bG$ is
\begin{eqnarray*}
\bG
= E\left\{ \frac{1}{n} \sum_{i=1}^n \bX_i Y_i^\obs\right\}
= E\left\{ \frac{1}{n} \sum_{i=1}^n T_i \bX_i Y_i^\obs + \frac{1}{n}   \sum_{i=1}^n (1-T_i) \bX_i Y_i^\obs\right\} 
= p_1 \bS_{x1} + p_0 \bS_{x0}.
\end{eqnarray*}
Term $\bH$ is
$
\bH
= E\left\{ n^{-1} \sum_{i=1}^n T_i \bX_i Y_i^\obs  \right\} 
= p_1 \bS_{x1}.
$
We apply the following formula for the inverse of a block matrix:
$$
\begin{pmatrix}
\bA&\bB\\
\bC&\bD
\end{pmatrix}^{-1} =
\begin{pmatrix}
\bS_\bD^{-1} & -\bA^{-1}\bB\bS_\bA^{-1} \\
-\bD^{-1}\bC\bS_\bD^{-1} & \bS_\bA^{-1}
\end{pmatrix},
$$
where $\bS_\bD = \bA - \bB\bD^{-1}\bC$ and $\bS_\bA = \bD - \bC\bA^{-1} \bB$ are the Schur complements of blocks $\bD$ and $\bA$. Omitting some tedious matrix algebra, we obtain
\begin{eqnarray*}
\bS_\bD =p_0 \pi_c \bS_{xx,c} (\pi_a \bS_{xx,a} + \pi_c \bS_{xx,c})^{-1} \bS_{xx} ,\quad 
\bS_\bA = p_1 p_0 \pi_c \bS_{xx,c},
\end{eqnarray*} 
and the inverse of the block matrix is
$$
\begin{pmatrix}
\bA&\bB\\
\bC&\bD
\end{pmatrix}^{-1} =
\begin{pmatrix}
p_0^{-1} \pi_c^{-1} \bS_{xx}^{-1} (\pi_a \bS_{xx,a} + \pi_c \bS_{xx,c}) \bS_{xx,c}^{-1} &
-p_1^{-1} p_0^{-1} \pi_c^{-1} \bS_{xx}^{-1} (\pi_a \bS_{xx,a} + p_1 \pi_c \bS_{xx,c} )  \bS_{xx,c}^{-1}  \\
 - p_0^{-1} \pi_c^{-1} \bS_{xx,c}^{-1} & p_1^{-1} p_0^{-1} \pi_c^{-1} \bS_{xx,c}^{-1}
\end{pmatrix}.
$$
Therefore, according to \eqref{eq::ABCDGH}, the probability limit of $\widehat{\bgamma}_\TSLS$ is
\begin{eqnarray}
&&p_0^{-1} \pi_c^{-1} \bS_{xx}^{-1} (\pi_a \bS_{xx,a} + \pi_c \bS_{xx,c}) \bS_{xx,c}^{-1}  (p_1 \bS_{x1} + p_0 \bS_{x0}) 
- p_1^{-1} p_0^{-1} \pi_c^{-1} \bS_{xx}^{-1} (\pi_a \bS_{xx,a} + p_1 \pi_c \bS_{xx,c} )  \bS_{xx,c}^{-1}  ( p_1 \bS_{x1} )  \nonumber  \\
&=&
\bS_{xx}^{-1} \bS_{x0} - \pi_a \pi_c^{-1} \bS_{xx}^{-1}\bS_{xx,a} \bS_{xx,c}^{-1} (\bS_{x1} - \bS_{x0} ) \nonumber  \\
&=&
\bgamma_0 - \pi_a \bS_{xx}^{-1} \bS_{xx,a} \bbeta_{c} \equiv \bgamma_\infty , 
 \label{eq::gamma-limit-tsls}
\end{eqnarray} 
and the probability limit of $\widehat{\bbeta}_\TSLS$ is
\begin{eqnarray}
\label{eq::tsls-limit}
- p_0^{-1} \pi_c^{-1} \bS_{xx,c}^{-1}    (p_1 \bS_{x1} + p_0 \bS_{x0})
+p_1^{-1} p_0^{-1} \pi_c^{-1} \bS_{xx,c}^{-1} (p_1 \bS_{x1} )
=\pi_c^{-1} \bS_{xx,c}^{-1} (\bS_{x1} - \bS_{x0}) = \bbeta_c,
\end{eqnarray}
where we use $\bS_{x1} - \bS_{x0} = \pi_c(\bS_{x1,c} - \bS_{x0,c})$, which is guaranteed by exclusion restrictions.

We next find the asymptotic distribution of $\widehat{\bbeta}_\TSLS.$
Following the derivation in Corollary~\ref{coro::ri-nomcompliance}, we first write
\begin{eqnarray*}
\begin{pmatrix}
\widehat{\bgamma}_\TSLS\\
\widehat{\bbeta}_\TSLS
\end{pmatrix}
-
\begin{pmatrix}
\bgamma_{\infty} \\
\bbeta_c
\end{pmatrix}
=
\left\{  \frac{1}{n}   \sum_{i=1}^n
\begin{pmatrix}
\bX_i\\
T_i \bX_i
\end{pmatrix}
(\bX_i^\T, D_i\bX_i^\T)
\right\} ^{-1}
\left\{ \frac{1}{n} \sum_{i=1}^n
\begin{pmatrix}
\bX_i (Y_i^\obs -  \bX_i^\T \bgamma_\infty - D_i \bX_i^\T \bbeta_c )  \\
T_i\bX_i(Y_i^\obs -  \bX_i^\T \bgamma_\infty - D_i \bX_i^\T \bbeta_c )  
\end{pmatrix}
\right\},
\end{eqnarray*}
then introduce
\begin{eqnarray}\label{eq::second-right}
\Delta_\TSLS &=&
\begin{pmatrix}
\bA&\bB\\
\bC&\bD
\end{pmatrix}
^{-1}
\left\{ \frac{1}{n} \sum_{i=1}^n
\begin{pmatrix}
\bX_i (Y_i^\obs -  \bX_i^\T \bgamma_\infty - D_i \bX_i^\T \bbeta_c )  \\
T_i\bX_i(Y_i^\obs -  \bX_i^\T \bgamma_\infty - D_i \bX_i^\T \bbeta_c )  
\end{pmatrix}
\right\} \nonumber \\
&=& 
\begin{pmatrix}
\bA&\bB\\
\bC&\bD
\end{pmatrix}
^{-1}
\begin{pmatrix}
n^{-1}\sumn T_i \bX_i e_i''(1) + n^{-1} \sumn (1-T_i) \bX_i e_i''(0)\\
n^{-1}\sumn T_i \bX_i e_i''(1)
\end{pmatrix} \nonumber \\
&=& 
\begin{pmatrix}
\bA&\bB\\
\bC&\bD
\end{pmatrix}
^{-1}
\begin{pmatrix}
n^{-1}\sumn T_i \bX_i \{ e_i''(1)  - e_i''(0)\} + n^{-1} \sumn \bX_i e_i''(0) \\
n^{-1}\sumn T_i \bX_i e_i''(1)
\end{pmatrix} , \nonumber\\
\label{eq::second-right}
\end{eqnarray}
with $(\bA,\bB,\bC,\bD)$ defined in \eqref{eq::ABCDGH} and $\{ e_i''(1), e_i''(0)\}$ defined in Theorem \ref{thm::TSLS},
and finally recognize that the difference between the above two formulas has high order. 
Again we need only to find the asymptotic distribution of $\Delta_\TSLS$. 
The covariance of the second term on the right hand side of \eqref{eq::second-right} is (dropping the constant sum of $\bX_i e''(0)$) 
\begin{eqnarray*}
&&\cov 
\begin{pmatrix}
n^{-1}\sumn T_i \bX_i \{ e_i''(1)  - e_i''(0)\}  \\
n^{-1}\sumn T_i \bX_i e_i''(1)
\end{pmatrix} \\
&=&
\frac{1}{n^2} \frac{n_1n_0}{n} 
\begin{pmatrix}
\mathcal{S}( \bX \varepsilon ) & \frac{1}{2} [  \S\{ \bX e''(1) \} - \S\{  \bX e''(0) \} + \S( \bX \varepsilon  ) ]\\
 \frac{1}{2} [  \S\{ \bX e''(1) \} - \S\{  \bX e''(0) \} + \S( \bX\varepsilon ) ] & \S\{ \bX e''(1) \}
\end{pmatrix},
\end{eqnarray*}
where the off-diagonal term comes from the finite population covariance between $\bX\{ e''(1) - e''(0) \} $ and $\bX e''(1)$.
Therefore, according to \eqref{eq::second-right}, the asymptotic covariance of $\Delta_\TSLS$ is the $(2,2)$ block of the following matrix
\begin{eqnarray*}
&&\frac{1}{n^2} \frac{n_1n_0}{n} 
\begin{pmatrix}
\bA&\bB\\
\bC&\bD
\end{pmatrix}
^{-1} \cdot \\
&&\begin{pmatrix}
\mathcal{S}( \bX\varepsilon ) & \frac{1}{2} [  \S\{ \bX e''(1) \} - \S\{  \bX e''(0) \} + \S( \bX\varepsilon  ) ]\\
 \frac{1}{2} [  \S\{ \bX e''(1) \} - \S\{  \bX e''(0) \} + \S( \bX\varepsilon ) ] & \S\{ \bX e''(1) \}
\end{pmatrix}\cdot
\begin{pmatrix}
\bA&\bB\\
\bC&\bD
\end{pmatrix}
^{-\T},
\end{eqnarray*}
which is
\begin{eqnarray*}
&&\frac{1}{n^2} \frac{n_1n_0}{n}  \Big\{   
( p_0^{-1} \pi_c^{-1} \bS_{xx,c}^{-1})     \mathcal{S}( \bX\varepsilon  )   ( p_0^{-1} \pi_c^{-1} \bS_{xx,c}^{-1})  ^\T
+ (p_1^{-1} p_0^{-1} \pi_c^{-1} \bS_{xx,c}^{-1})   \S\{ \bX e''(1) \} (p_1^{-1} p_0^{-1} \pi_c^{-1} \bS_{xx,c}^{-1})^\T  \\
&&- ( p_0^{-1} \pi_c^{-1} \bS_{xx,c}^{-1})  [  \S\{ \bX e''(1) \} - \S\{  \bX e''(0) \} + \S( \bX\varepsilon  )  ]  (p_1^{-1} p_0^{-1} \pi_c^{-1} \bS_{xx,c}^{-1})^\T 
\Big\}\\
&=& (\pi_c \bS_{xx,c} ) ^{-1}
\left[
\frac{ \S\{ \bX e''(1) \} }{n_1 }  +  \frac{ \S\{ \bX e''(0) \} }{n_0 } - \frac{ \S(  \bX \varepsilon ) }{n }
\right]
(\pi_c \bS_{xx,c} ) ^{-1} .
\end{eqnarray*}
The asymptotic normality follows from the representation in \eqref{eq::second-right} and the FPCLT. 
\end{proof}

\paragraph{Theorem~\ref{thm::decomposition-noncompliance}: Decomposition of variation in non-compliance.}
The following proof uses two facts: $\tau_a =  \tau_n = 0$, and $ \tau = \pi_c \tau_c.$

\begin{proof}
[Proof of Theorem \ref{thm::decomposition-noncompliance}.]
Write the total treatment effect variation as
\begin{eqnarray*}
S_{\tau\tau} &=&  \frac{1}{n} \sum_{i=1}^n  (\tau_i  - \tau )^2 
= \frac{1}{n} \sum_{i=1}^n \tau_i^2 - \tau^2 \\
&=& \frac{1}{n} \sum_{i=1}^n  I_{(U_i = c) }  \tau_i^2 - \pi_c^2  \tau_c^2 
= \pi_c \left(   \frac{1}{n_c}\sum_{i=1}^n  I_{(U_i = c) }  \tau_i^2 -  \tau_c^2    \right)  + \pi_c(1-\pi_c) \tau_c^2 ,
\end{eqnarray*}
the treatment effect variation explained by compliance status as
\begin{eqnarray*}
S_{\tau\tau, U}  &=& \sum_{u=c,a,n} \pi_u  ( \tau_u - \tau)^2
= \pi_c (\tau_c   - \pi_c \tau_c )^2 + \pi_a (0 -  \pi_c \tau_c)^2 + \pi_n(0- \pi_c  \tau_c)^2\\
&=& \pi_c \tau_c^2  \left\{   (1-\pi_c)^2  + \pi_c(\pi_a + \pi_n)  \right\} 
= \pi_c (1 - \pi_c) \tau_c^2,
\end{eqnarray*}
and the subtotal treatment effect variation for compliers as
\begin{eqnarray*}
S_{\tau\tau,c} = \frac{1}{n_c}   \sum_{i=1}^n I_{(U_i = c) } (\tau_i -  \tau_c)^2
=  \frac{1}{n_c} \sum_{i=1}^n  I_{(U_i = c) }  \tau_i^2 - \tau_c^2 .
\end{eqnarray*}
Therefore, the above three terms has the relationship
$
S_{\tau\tau}  = \pi_c  S_{\tau\tau,c} + S_{\tau\tau, U} . 
$

The decomposition $S_{\tau\tau, c} = S_{\delta \delta, c} + S_{\varepsilon\varepsilon,c}$ follows immediately from the definition of $\bbeta_c.$
\end{proof}

%
%
%\begin{proof}
%[Proof of Corollary \ref{coro::noncompliance}.]
%The proof is similar to the proofs of Theorem \ref{thm::bounds_for_s_tau_tau} and Corollary \ref{coro::independent}.
%\end{proof}

\section{More detailed comments}
\label{appendix::b}

Appendices B.1--B.5 give more details of some technical issues and extensions mentioned in the main text, and Appendix B.6 contains the proofs of the results in \ref{appendix::b}.

\subsection{Covariate adjustment to improve efficiency}

In the main text, the role of covariates has been to model the treatment effect alone. In general, we also want to use covariates to reduce sampling variability of $\widehat{\bbeta}_{\RI}$, just as we can use covariates to get more precise estimates of the average treatment effect.
In particular, the goal is to more precisely estimate $\widehat{\bS}_{xt} \in \mathbb{R}^K$; because these are the only random components in $\widehat{\bbeta}_{\RI}$, if we estimate them more precisely, we estimate $\widehat{\bbeta}_{\RI}$ more precisely as well.
Let $\bW_i \in \mathbb{R}^J$ denote a vector of pretreatment covariates without the intercept term. Because $\bX_i$ and $\bW_i$ have different roles in estimation, they may also contain different sets of covariates, though, in practice, $\bX$ is likely to be a subset of $\bW$.

Following the covariate adjustment approach in survey sampling, we can obtain a model-assisted estimator for $\bbeta$ that uses $\bW$ to reduce sampling variability. To see this, we need several definitions. Define $\overline{\bW} = n^{-1} \sumn \bW_i $ and $\bS_{ww} = n^{-1} \sumn \bW_i \bW_i^\T $, with  $\text{det}(\bS_{ww}) > 0$; define $\overline{\bW}_t$ and $\widehat{\bS}_{ww,t}$ as the sample mean and covariance of $\bW$ under treatment arm $t$; define $\widehat{
\bB}_t \in \mathbb{R}^{J\times K}$ as the regression coefficient of $Y^\obs \bX$ on $\bW$ for treatment arm $t$:
$$
\widehat{\bB}_t = \widehat{\bS}_{ww,t}^{-1} \left\{   \frac{1}{n_t}  \sumn I_{(T_i=t)}  \bW_i (Y_i^\obs \bX_i)^\T    \right\}. 
$$ 
The model-assisted estimator for $\bS_{xt}$ is then 
$$
\widehat{\bS}_{xt}^w = \widehat{\bS}_{xt} - \widehat{\bB}_t^\T  ( \bar{\bW}_t - \bar{\bW}) , \quad (t=0,1).
$$ 
As a result, we can improve the randomization-based estimator by
$$
\widehat{\bbeta}_{\RI}^w = \bS_{xx}^{-1}  (  \widehat{ \bS  }_{x1}^w  - \widehat{ \bS }_{x0}^w  ).
$$

\begin{theorem}
\label{thm::beta-model-assisted}
The model-assisted estimator
$
\widehat{\bbeta}_{\RI}^w 
$
is consistent for $\bbeta$ with asymptotic covariance  
\begin{eqnarray*}
\label{eq::covariance-adjusted}
\bS_{xx}^{-1}  \left[  
   \frac{  \mathcal{S} \{ \bE (1)  \}    }{n_1} 
   + \frac{ \mathcal{S}\{  \bE (0)   \}   }{n_0}  
   - \frac{  \mathcal{S} (  \bDelta  )  }{ n  } 
   \right]   \bS_{xx}^{-1} ,
\end{eqnarray*}
where $\bE_{i}(t) = Y_i(t) \bX_i - \bB_t^\T ( \bW_i - \bar{\bW})$ is the residual term and $\bDelta_i = \bE_i(1) - \bE_i(0)$.
\end{theorem}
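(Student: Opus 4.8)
The plan is to mirror the proof of Theorem~\ref{theorem::ols}: first replace the \emph{estimated} adjustment coefficient $\widehat{\bB}_t$ by its fixed population analogue $\bB_t$ (the finite-population regression coefficient of $Y(t)\bX$ on $\bW$, which is the probability limit of $\widehat{\bB}_t$), incurring only a higher-order error, and then recognize the resulting linearized statistic as an \emph{exact} Neyman-type difference in means to which Theorem~\ref{thm::general-neyman} applies directly.

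First I would introduce the infeasible version $\widetilde{\bS}_{xt}^w = \widehat{\bS}_{xt} - \bB_t^\T(\bar{\bW}_t - \bar{\bW})$ that uses the fixed $\bB_t$ in place of $\widehat{\bB}_t$. The difference $\widehat{\bS}_{xt}^w - \widetilde{\bS}_{xt}^w = (\widehat{\bB}_t - \bB_t)^\T(\bar{\bW}_t - \bar{\bW})$ is a product of two factors, each $O_P(n^{-1/2})$ under Condition~1 and the FPCLT: $\widehat{\bB}_t - \bB_t$ is $O_P(n^{-1/2})$ because the arm-$t$ sample cross-moments and $\widehat{\bS}_{ww,t}$ converge to their population limits at the $\sqrt{n}$ rate (using $\det(\bS_{ww})>0$), while $\bar{\bW}_t - \bar{\bW}$ is $O_P(n^{-1/2})$ as the centered sample mean of a fixed covariate under complete randomization. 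Hence the difference is $O_P(n^{-1})$, negligible relative to the leading terms, exactly as in \eqref{eq::higher-order}. Consequently $\widehat{\bbeta}_\RI^w$ and $\bS_{xx}^{-1}(\widetilde{\bS}_{x1}^w - \widetilde{\bS}_{x0}^w)$ share the same asymptotic distribution.

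Next I would simplify $\widetilde{\bS}_{xt}^w$. Writing $\bar{\bW}_t - \bar{\bW} = n_t^{-1}\sum_i I_{(T_i=t)}(\bW_i - \bar{\bW})$ and $\widehat{\bS}_{xt} = n_t^{-1}\sum_i I_{(T_i=t)}Y_i(t)\bX_i$ on the support, the two terms collapse into a single arm-$t$ sample mean of fixed finite-population vectors,
$$
\widetilde{\bS}_{xt}^w = \frac{1}{n_t}\sum_{i=1}^n I_{(T_i=t)}\bigl\{ Y_i(t)\bX_i - \bB_t^\T(\bW_i - \bar{\bW}) \bigr\} = \frac{1}{n_t}\sum_{i=1}^n I_{(T_i=t)}\bE_i(t).
$$
Thus $\widetilde{\bS}_{x1}^w - \widetilde{\bS}_{x0}^w$ is precisely the Neyman difference-in-means estimator for the vector potential outcome $\bE(t)$, whose individual effect is $\bE_i(1) - \bE_i(0) = \bDelta_i$. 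Applying Theorem~\ref{thm::general-neyman} with $\bV(t) = \bE(t)$ gives its covariance as $\S\{\bE(1)\}/n_1 + \S\{\bE(0)\}/n_0 - \S(\bDelta)/n$, and sandwiching by the fixed matrix $\bS_{xx}^{-1}$ delivers the stated covariance for $\widehat{\bbeta}_\RI^w$. Consistency follows because $\bar{\bE}(t) = n^{-1}\sum_i \bE_i(t) = \bS_{xt}$ (since $\sum_i(\bW_i - \bar{\bW}) = \bzero$), so $\widetilde{\bS}_{x1}^w - \widetilde{\bS}_{x0}^w$ is exactly unbiased for $\bS_{x1} - \bS_{x0}$ with variance $O(n^{-1})$ and $\bS_{xx}^{-1}(\bS_{x1} - \bS_{x0}) = \bbeta$; asymptotic normality comes from the same representation together with the FPCLT.

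The main obstacle is the linearization step, namely rigorously controlling the error from estimating $\bB_t$. Once the product term is shown to be $O_P(n^{-1})$, the remainder is an exact finite-population calculation handled by Theorem~\ref{thm::general-neyman}. The delicate points there are verifying the $\sqrt{n}$-consistency of $\widehat{\bB}_t$ (requiring non-degeneracy of $\bS_{ww}$ and the moment conditions of Condition~1 applied to the products $\bW_i(Y_i(t)\bX_i)^\T$) and confirming that the $O_P(n^{-1})$ remainder does not disturb the limiting normal law---both standard survey-sampling ratio-estimator arguments in the spirit of \citet{cochran::1977}.
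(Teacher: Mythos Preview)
Your proposal is correct and follows essentially the same approach as the paper: replace the estimated $\widehat{\bB}_t$ by the fixed population analogue $\bB_t$, argue that the replacement error is $O_P(n^{-1})$ via the ratio-estimator argument of \eqref{eq::higher-order}, and then recognize the resulting infeasible estimator as an exact difference of sample means of $\bE_i(t)$ to which the vector Neyman result applies. The paper's proof is terser (it invokes Theorem~\ref{thm::randomization-inference} rather than Theorem~\ref{thm::general-neyman} directly, but the two are equivalent here), and your explicit identification of the remainder as $-(\widehat{\bB}_t - \bB_t)^\T(\bar{\bW}_t - \bar{\bW})$ together with the unbiasedness argument $\bar{\bE}(t) = \bS_{xt}$ is a useful elaboration.
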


The estimator, $\widehat{\bbeta}_{\RI}^w$ uses covariates both to estimate treatment effect variation and to reduce sampling variability. Asymptotically, as long as $\bW$ is predictive of the marginal potential outcomes, the model-assisted estimator will improve precision over the unassisted estimators.

\subsection{Fisherian exact inference}

When $\varepsilon_i=0$ for all $i$, we can obtain exact inference for $\bbeta$ based on the Fisher randomization test \citep{rubin::1980, rosenbaum::2002, ding::2015jrssb}. With a known $\bbeta$, the null hypothesis 
\begin{eqnarray}
H_0(\bbeta): Y_i(1) - Y_i(0) = \bX_i^\T \bbeta \text{ for all } i 
\label{eq::h0-beta}
\end{eqnarray} 
is sharp in the sense of allowing for full imputation of all missing potential outcomes based on the observed data. We can perform randomization test using any sensible test statistic measuring the deviation from the null hypothesis $H_0(\bbeta)$, for example, the test statistic $t(\bT, \bY^\obs;\bbeta)$ can be the difference-in-means, difference-in-medians or the Kolmogorov--Smirnov statistics comparing two samples $\{ Y_i^\obs - \bX_i ^\T \bbeta:T_i=1 , i=1,\ldots, n\}$ and $\{  Y_i^\obs: T_i=0, i=1,\ldots, n\}.$ Then we can obtain a $(1-\alpha)$ level confidence region for $\bbeta$ by inverting a sequence of randomization tests:
$$
\CR_\alpha = 
\{  \bbeta :   \text{Randomization test fails to reject } H_0(\bbeta) \text{ at significance level } \alpha\} .
$$
The confidence region $\CR_\alpha$ is exact regardless of the sample size, and it is valid for general designs of experiments if we use the corresponding assignment mechanism to simulate the null distribution of the test statistic. Due to the duality between testing and interval estimation, we reject $H_0(\bX)$ with $\bbeta_1 = 0$ in Section \ref{sec::omnibus} if $\CR_\alpha \cap \{ \bbeta : \bbeta_1=0 \} $ is an empty set, which controls the type one error rate by $\alpha.$

\subsection{A Variance Ratio Test}

\citet{raudenbush::2015} have noticed that if the variance of the treatment potential outcome is smaller than the control potential outcome, then the correlation between the individual treatment effect and the control potential outcome is negative. This statement does not involve any covariates, but it can be generalized to incorporate systematic and idiosyncratic treatment effect variation. 
Below we give a finite population version of their result.

\begin{theorem}
\label{thm::variance-ratio-theorem}
If the finite population variance of $\{  Y_i(1)  - \bX_i ' \bbeta \}_{i=1}^n$ is smaller than $  \{  Y_i(0) \}_{i=1}^n$, then the idiosyncratic treatment effect variation, $\{ \varepsilon_i \}_{i=1}^n$, is negatively correlated with the control potential outcomes. 
\end{theorem}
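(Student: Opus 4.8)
The plan is to reduce the statement to a single algebraic identity followed by a routine variance expansion. The crucial observation is that the decomposition~\eqref{eq::main-decomposition} rewrites exactly the quantity appearing in the hypothesis. Since $Y_i(1) - Y_i(0) = \bX_i^\T \bbeta + \varepsilon_i = \delta_i + \varepsilon_i$, rearranging gives
\[
Y_i(1) - \bX_i^\T \bbeta = Y_i(0) + \varepsilon_i .
\]
Thus the object whose finite population variance the hypothesis constrains is nothing but the control potential outcome shifted by the idiosyncratic term, and the whole theorem becomes a statement about $\var\{Y(0) + \varepsilon\}$ versus $\var\{Y(0)\}$.

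Next I would expand this variance using bilinearity of the finite population variance and covariance operators over the $n$ units:
\[
\var\{ Y(1) - \bX^\T \bbeta \} = \var\{ Y(0) \} + \var(\varepsilon) + 2\,\cov\{ Y(0), \varepsilon \} .
\]
At this point I would note for cleanliness that $\bar\varepsilon = 0$, which follows because $\bX_i$ contains an intercept and the normal equations defining $\bbeta$ force $\bS_{x\varepsilon} = 0$; this makes $\var(\varepsilon)$ literally equal to $n^{-1}\sum_i \varepsilon_i^2$, though the sign argument does not depend on it.

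The hypothesis is $\var\{ Y(1) - \bX^\T \bbeta \} < \var\{ Y(0) \}$. Substituting the expansion and cancelling the common term $\var\{ Y(0) \}$ from both sides yields $\var(\varepsilon) + 2\,\cov\{ Y(0), \varepsilon \} < 0$. Because a finite population variance is nonnegative, $\var(\varepsilon) \geq 0$, rearranging gives
\[
\cov\{ Y(0), \varepsilon \} < -\tfrac{1}{2}\var(\varepsilon) \leq 0 ,
\]
so the idiosyncratic treatment effect is negatively correlated with the control potential outcomes, as claimed. As a byproduct the same inequality forces $\var(\varepsilon) > 0$ strictly, so the correlation is well defined and strictly negative.

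I do not expect any real obstacle here: the only step requiring care is the identity $Y_i(1) - \bX_i^\T \bbeta = Y_i(0) + \varepsilon_i$, which is immediate from the decomposition, together with the elementary fact that a variance cannot be negative. The result is essentially a finite population restatement of the super population observation of~\citet{raudenbush::2015}, with the covariate adjustment absorbed into the definition of $\varepsilon$; the main conceptual contribution is recognizing that the covariate-residualized treated outcome is exactly $Y(0) + \varepsilon$.
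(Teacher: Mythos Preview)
Your proposal is correct and follows essentially the same route as the paper: rewrite $Y_i(1)-\bX_i^\T\bbeta = Y_i(0)+\varepsilon_i$, expand $\var\{Y(0)+\varepsilon\}$, and cancel $\var\{Y(0)\}$ to conclude $2\cov\{Y(0),\varepsilon\} \leq -\var(\varepsilon) < 0$. If anything, your version is slightly more careful in noting that the strict inequality forces $\var(\varepsilon)>0$, so the correlation is well defined.
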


Because the condition in Theorem \ref{thm::variance-ratio-theorem} depends only on the marginal distributions of the potential outcomes, we propose a formal variance ratio test of it using the observed data, which is a generalization of a similar theorem in~\citet{ding::2015jrssb}:

\begin{theorem}
\label{thm::variance-ratio-test}
The variance ratio test with rejection region
$$
\frac{  \log s_1^2   - \log s_0^2   }{  \sqrt{    (\widehat{\kappa}_1 - 1)/n_1 + (\widehat{\kappa}_0 - 1)/n_0      }   }
< \Phi^{-1}(\alpha),
$$
has size at least as large as $\alpha$, where $s_1^2$ and $\widehat{\kappa}_1$ are the sample variance and kurtosis of $\{  Y_i^{\obs} - \bX_i^\T  \widehat{\bbeta}_{\RI} : T_i = 1, i=1,\ldots, n\}$, and $s_0^2$ and $\widehat{\kappa}_0$ are the sample variance and kurtosis of $\{Y_i^{\obs}: T_i = 0, i=1,\ldots, n \} $, and $\Phi^{-1}(\alpha)$ is the $\alpha$-th quantile of the standard normal distribution.
\end{theorem}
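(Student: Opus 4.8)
The plan is to reduce the statement to a comparison between the variance used to standardise the statistic and the true asymptotic variance of $\log s_1^2 - \log s_0^2$, and then to argue that the former is never larger than the latter under the null, so that the standardised statistic is over-dispersed and the left-tail rejection probability is inflated above $\alpha$.

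First I would express each sample variance as a sample average over its treatment arm of squared deviations. Writing $a_i = \{Y_i(1) - \bX_i^\T \bbeta - \mu_1\}^2$ and $b_i = \{Y_i(0) - \mu_0\}^2$, with $\mu_1, \mu_0$ the corresponding finite-population means, $s_1^2$ and $s_0^2$ are, up to the usual mean-centering corrections, the treated-arm average of the $a_i$ and the control-arm average of the $b_i$. Viewing $(a,b)$ as a bivariate ``outcome'' and invoking the finite population central limit theorem of \citet{li2017general} together with the delta method for the map $x \mapsto \log x$, I would obtain joint asymptotic normality of $(\log s_1^2, \log s_0^2)$. To leading order the marginal asymptotic variance of $\log s_t^2$ is $(\kappa_t - 1)/n_t$, consistently estimated by $(\widehat{\kappa}_t - 1)/n_t$, so the denominator of the test statistic converges to $\var(\log s_1^2) + \var(\log s_0^2)$, i.e. the variance one would obtain if the two sample variances were independent.

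Second, I would show that using the estimated residuals $Y_i^\obs - \bX_i^\T \widehat{\bbeta}_{\RI}$ in place of $Y_i^\obs - \bX_i^\T \bbeta$ is asymptotically negligible: since $\widehat{\bbeta}_{\RI} - \bbeta = O_P(n^{-1/2})$ by Theorem~\ref{thm::randomization-inference} and the induced per-unit perturbation enters the sample variance only through lower-order cross terms, $s_1^2$ computed from the plug-in residuals is asymptotically equivalent to $s_1^2$ computed from the true residuals, leaving the limiting distribution unchanged.

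The crux, and the step I expect to be the main obstacle, is the true asymptotic variance of the difference, which is $\var(\log s_1^2) + \var(\log s_0^2) - 2\cov(\log s_1^2, \log s_0^2)$. By the same finite-population covariance calculation underlying Theorem~\ref{thm::general-neyman}, $\cov(s_1^2, s_0^2)$ equals $-n^{-1}$ times the finite-population covariance $\mathcal{S}(a,b)$ between the two sets of squared deviations, a quantity that is \emph{unidentifiable} because it couples both potential outcomes of each unit. Under the null of Theorem~\ref{thm::variance-ratio-theorem} at its boundary---in particular under the sharp null $\varepsilon_i \equiv 0$, where $Y_i(1) - \bX_i^\T \bbeta = Y_i(0)$ so that $a_i = b_i$ for every unit---this covariance reduces to the variance of a single set of squared deviations and is therefore non-negative, making $\cov(\log s_1^2, \log s_0^2) \le 0$. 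Consequently the true asymptotic variance is at least the probability limit of the denominator, so the standardised statistic converges to $\mathcal{N}(0, v)$ with $v \ge 1$. Since $\Phi^{-1}(\alpha) < 0$ for $\alpha < 1/2$, the rejection probability $\Phi\{\Phi^{-1}(\alpha)/\sqrt{v}\} \ge \Phi\{\Phi^{-1}(\alpha)\} = \alpha$, which gives the claimed lower bound on the size. The delicate part is justifying the sign of the omitted covariance term without access to the joint distribution of the potential outcomes; the argument hinges on identifying the least-favourable null configuration, at which the unidentifiable term collapses to a manifestly non-negative variance.
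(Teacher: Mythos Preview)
Your decomposition strategy is the right idea, but the key identification is off and this flips the direction of the inequality. You claim that the denominator $(\widehat{\kappa}_1-1)/n_1 + (\widehat{\kappa}_0-1)/n_0$ converges to $\var(\log s_1^2) + \var(\log s_0^2)$, the sum of the \emph{finite-population} marginal variances. It does not: $(\kappa_t-1)/n_t$ is the \emph{super-population} (i.i.d.) asymptotic variance of $\log s_t^2$, whereas the finite-population variance of $s_t^2$, viewed as the sample mean of your $a_i$ (or $b_i$) over one arm of a completely randomized design, carries the usual finite-population correction and equals $\tfrac{n-n_t}{n\, n_t}\,S_{aa}$ rather than $S_{aa}/n_t$. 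So the denominator strictly over-estimates each finite-population marginal variance, and your chain ``true variance $\ge \var_1 + \var_0 =$ denominator'' breaks at the last step: both the true variance and the denominator exceed $\var_1 + \var_0$, and this route cannot order them.

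Comparing directly via Neyman's formula (the paper's Lemma~A.3) gives the opposite sign: with $c_i = a_i$ and $d_i = b_i$, the true finite-population variance of the log-ratio is $m^{-2}\bigl(S_{cc}/n_1 + S_{dd}/n_0 - S_{c-d,c-d}/n\bigr)$, i.e.\ the probability limit of the denominator \emph{minus} the nonnegative term $S_{c-d,c-d}/(nm^2)$. The standardised statistic is therefore under-dispersed ($v \le 1$), giving $\Phi\{\Phi^{-1}(\alpha)/\sqrt{v}\} \le \alpha$: the test is \emph{conservative}. This is what the paper's proof actually establishes---citing \citet{ding::2015jrssb} for the super-population exact case and then using Lemma~A.3 to show the finite-population variance is no larger---and what the remark immediately after the theorem says (``generally conservative''). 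The wording ``size at least as large as $\alpha$'' in the theorem statement appears to be a slip, and it has led you to argue for the wrong direction. Your Slutsky argument for replacing $\bbeta$ by $\widehat{\bbeta}_\RI$ is fine and matches the paper.
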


For finite population inference, the above test in Theorem \ref{thm::variance-ratio-test} is generally conservative, but for superpopulation inference, it is asymptotically exact.

Note that \citet{raudenbush::2015} and Theorem \ref{thm::variance-ratio-theorem} are only about detecting a negative association. Unfortunately, there is no testable condition for a positive association.

\subsection{More on noncompliance: estimating the bounds of the $R^2$s}

The component $S_{\tau\tau,U}$ and and the probability $ \pi_c$ are directly identifiable according to previous discussion. Furthermore, $ S_{\delta\delta,c}$ is also identifiable according to the following result.

\begin{corollary}
\label{coro::complier-systematic}
$S_{\delta\delta,c}$ can be expressed as the expectation of the following quantity:
$$
\frac{1}{\pi_c} \left\{   
\frac{1}{n} \sumn (\delta_i - \tau_c)^2 - \frac{1}{n_1} \sumn T_i(1-D_i)(\delta_i-\tau_c)^2
-\frac{1}{n_0} \sumn (1-T_i)D_i (\delta_i-\tau_c)^2
\right\}.
$$
\end{corollary}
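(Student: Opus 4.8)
The plan is to exploit that $(\delta_i - \tau_c)^2$ is a fixed, pre-treatment quantity for every unit: since $\delta_i = \bX_i^\T \bbeta_c$ and $\tau_c$ are finite-population constants, the only randomness in the proposed estimator enters through the assignment vector $\bT$ (and hence through $D_i = T_i D_i(1) + (1-T_i)D_i(0)$). Writing $g_i = (\delta_i - \tau_c)^2$ and denoting the bracketed expression by
$$
Q = \frac{1}{n}\sumn g_i - \frac{1}{n_1}\sumn T_i(1-D_i)\, g_i - \frac{1}{n_0}\sumn (1-T_i)D_i\, g_i ,
$$
the claim reduces to showing $E(Q) = \pi_c\, S_{\delta\delta,c}$, since $\pi_c$ is fixed and the corollary asserts $S_{\delta\delta,c} = E(\pi_c^{-1} Q) = \pi_c^{-1} E(Q)$.

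The key step is to translate the two observed selection events into latent compliance-type indicators using monotonicity. First I would note that a treatment-assigned unit has $D_i = D_i(1)$, so $T_i(1-D_i) = T_i\, I_{\{D_i(1)=0\}}$; because Assumption~\ref{ass::iv}(i) rules out Defiers, $D_i(1)=0$ is equivalent to $U_i = n$, giving $T_i(1-D_i) = T_i\, I_{(U_i=n)}$. Symmetrically, a control-assigned unit has $D_i = D_i(0)$, so $(1-T_i)D_i = (1-T_i)\, I_{\{D_i(0)=1\}} = (1-T_i)\, I_{(U_i=a)}$, again by the absence of Defiers.

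With these identifications, I would take expectations term by term over all randomizations, using $E(T_i) = n_1/n$ and $E(1-T_i) = n_0/n$ from Lemma~\ref{lemma::cov} and the fact that each $I_{(U_i=u)}\, g_i$ is a fixed number. The first term returns $n^{-1}\sumn g_i$ unchanged; the second returns $n^{-1}\sumn I_{(U_i=n)} g_i$; the third returns $n^{-1}\sumn I_{(U_i=a)} g_i$. Collecting terms and using that monotonicity partitions the population into Compliers, Always Takers, and Never Takers (so $I_{(U_i=c)} = 1 - I_{(U_i=n)} - I_{(U_i=a)}$),
$$
E(Q) = \frac{1}{n}\sumn \bigl(1 - I_{(U_i=n)} - I_{(U_i=a)}\bigr) g_i = \frac{1}{n}\sumn I_{(U_i=c)}\, g_i = \pi_c\, S_{\delta\delta,c},
$$
where the final equality uses $\pi_c = n_c/n$. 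Dividing by $\pi_c$ then yields $S_{\delta\delta,c}$, completing the argument.

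The only genuinely substantive step is the monotonicity-based bookkeeping that converts the observed indicators $T_i(1-D_i)$ and $(1-T_i)D_i$ into the latent type indicators $I_{(U_i=n)}$ and $I_{(U_i=a)}$; everything afterward is a linear expectation computation. It is worth emphasizing that no exclusion restriction is invoked in this derivation: we are taking the expectation of a covariate-only functional $g_i$ rather than of outcomes, so the exclusion restrictions enter only implicitly, through the prior definitions of $\tau_c$ and $\delta_i$, which I treat as given.
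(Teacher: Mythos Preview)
Your proof is correct and follows essentially the same approach as the paper: convert the observed indicators $T_i(1-D_i)$ and $(1-T_i)D_i$ into the latent type indicators $T_i I_{(U_i=n)}$ and $(1-T_i) I_{(U_i=a)}$ via monotonicity, take expectations using $E(T_i)=n_1/n$, and combine. The paper's proof is just a terser version of yours, displaying the two key expectation identities and leaving the final subtraction implicit.
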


Because $\pi_c$, $\delta_i = \bX_i^\T \bbeta_c$ and $\tau_c$ can be estimated by a plug-in approach, $S_{\delta\delta,c}$ can also be estimated from the observed data.

In the ITT case, estimation of the residual distributions are straightforward. In the noncompliance case, however, we need more discussion about the estimation of $F_{1c}(y)$ and $F_{0c}(y)$, because $U_i$ is a latent variable. To avoid notational clatter, we assume that $\bgamma_{c1}$ and $\bgamma_{c0}$ are known; in practice we can replace them by the randomization-based estimators $\widehat{\bgamma}_{c1,\RI}$ and $\widehat{\bgamma}_{c0,\RI}$, and the consistency of the final estimator will not be affected. Recall the potential residuals $e_i'(1)$ and $e_i'(0)$ defined in \eqref{eq::potential-residual-noncompliance}, and its observed value $e'_i=T_ie'_i(1) + (1-T_i)e'_i(0)$. We define the following quantities
\begin{eqnarray}
\begin{array}{lll}
\widehat{F}_{11}(y) = \frac{1}{n_1} \sumn T_iD_iI_{(e'_i\leq y)},&& 
\widehat{F}_{10}(y) = \frac{1}{n_1} \sumn T_i(1-D_i) I_{(e'_i\leq y)}, \\
\widehat{F}_{01}(y) = \frac{1}{n_0} \sumn (1-T_i) D_i I_{(e'_i\leq y)},&&
\widehat{F}_{00}(y) = \frac{1}{n_0} \sumn (1-T_i)(1-D_i) I_{(e'_i\leq y)}.  
\end{array} 
\label{eq::cdf-c}
\end{eqnarray}

Similar to Corollary \ref{coro::ri-nomcompliance}, we have the following results.

\begin{corollary}\label{coro::cdf-estimation}
For any $y,$
$$
E\{ \widehat{F}_{11}(y) -\widehat{F}_{01}(y)   \} = \pi_c F_{1c}(y),\quad
E\{  \widehat{F}_{00}(y) -\widehat{F}_{10}(y)  \} = \pi_c F_{0c}(y). 
$$
\end{corollary}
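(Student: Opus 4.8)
The plan is to mirror the argument used for Theorem~\ref{thm::randomization-inference-noncompliance} and Corollary~\ref{coro::ri-nomcompliance}, simply replacing the products $\bX_i Y_i^\obs$ and $\bX_i\bX_i^\T$ appearing there by the indicator $I_{(e'_i \leq y)}$ here. First I would rewrite each of the four observed-subgroup distribution functions in \eqref{eq::cdf-c} in terms of the latent compliance strata by invoking monotonicity. Since $e'_i = e'_i(1)$ for treated units and $e'_i = e'_i(0)$ for control units, and since under monotonicity $T_i D_i = T_i\{I_{(U_i=a)} + I_{(U_i=c)}\}$, $(1-T_i)D_i = (1-T_i) I_{(U_i=a)}$, $T_i(1-D_i) = T_i I_{(U_i=n)}$, and $(1-T_i)(1-D_i) = (1-T_i)\{I_{(U_i=n)}+I_{(U_i=c)}\}$, each empirical CDF becomes a sum, over a single pair of strata, of stratum indicators times $I_{(e'_i(t)\leq y)}$ for the appropriate $t$.

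Second I would take expectations over the randomization. Because $E(T_i) = n_1/n$ and $E(1-T_i) = n_0/n$ for every unit, while the strata memberships and the residual potential outcomes are fixed finite-population quantities, the $1/n_t$ normalizations cancel and each expectation collapses to a finite-population average. Concretely, $E\{\widehat{F}_{11}(y)\} = \frac{1}{n}\sumn\{I_{(U_i=a)}+I_{(U_i=c)}\}\, I_{(e'_i(1)\leq y)}$ and $E\{\widehat{F}_{01}(y)\} = \frac{1}{n}\sumn I_{(U_i=a)}\, I_{(e'_i(0)\leq y)}$, with the analogous expressions for $\widehat{F}_{00}$ and $\widehat{F}_{10}$.

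The crucial step---and the only place where any care is needed---is the cancellation of the Always-Taker (respectively Never-Taker) contributions. Here I would use the exclusion restriction $Y_i(1)=Y_i(0)$ for these strata together with the fact that the definition in \eqref{eq::potential-residual-noncompliance} deliberately uses a single coefficient within each of these strata ($\bgamma_{1c}$ for Always Takers and $\bgamma_{0c}$ for Never Takers). Consequently $e'_i(1) = e'_i(0)$ for every Always Taker and Never Taker, so $I_{(e'_i(1)\leq y)} = I_{(e'_i(0)\leq y)}$ for these units, and the corresponding terms are identical in the two differenced CDFs. Subtracting therefore annihilates the $U_i=a$ contribution in $E\{\widehat{F}_{11}-\widehat{F}_{01}\}$ and the $U_i=n$ contribution in $E\{\widehat{F}_{00}-\widehat{F}_{10}\}$.

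Finally, what remains in each difference is exactly the Complier term: $\frac{1}{n}\sumn I_{(U_i=c)}\, I_{(e'_i(1)\leq y)} = (n_c/n)\,F_{1c}(y) = \pi_c F_{1c}(y)$ for the first identity, and $\pi_c F_{0c}(y)$ for the second, where $F_{1c}$ and $F_{0c}$ are recognized as the empirical CDFs of the Complier residuals $\{e'_i(1):U_i=c\}$ and $\{e'_i(0):U_i=c\}$. I expect no genuine obstacle beyond bookkeeping; the single subtlety is keeping straight that the $a/n$ cancellation hinges jointly on the exclusion restriction and on the particular choice of coefficients built into \eqref{eq::potential-residual-noncompliance}.
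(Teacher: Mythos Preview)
Your proposal is correct and follows essentially the same approach as the paper's proof: rewrite each $\widehat{F}_{td}$ in terms of compliance strata via monotonicity, take expectations using $E(T_i)=n_1/n$, and obtain the result. You are in fact more explicit than the paper about the key cancellation step, correctly noting that the Always-Taker and Never-Taker terms drop out precisely because the exclusion restriction together with the coefficient convention in \eqref{eq::potential-residual-noncompliance} forces $e'_i(1)=e'_i(0)$ for those strata.
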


Therefore, we can estimate $F_{1c}(y)$ by $\{\widehat{F}_{11}(y) -\widehat{F}_{01}(y)\}/\widehat{\pi}_c$, and estimate $F_{0c}(y)$ by $\{ \widehat{F}_{00}(y) -\widehat{F}_{10}(y) \} / \widehat{\pi}_c$. As we mentioned before, in practice, we use $\widehat{e}_i'$ instead of $e_i'$ in the formulas in \eqref{eq::cdf-c}.

\subsection{Proofs of the theorems and corollaries in Appendix B}

\begin{proof}[Proof of Theorem \ref{thm::beta-model-assisted}.]
The population-level OLS regression matrix of $Y(t) \bX$ onto $\bW$ is
$$
\bB_t = \bS_{ww}^{-1}  \left\{     \frac{1}{n} \sumn   \bW_i \{Y_i(t)\bX_i\}^\T       \right\}  \in \mathbb{R}^{J\times K}.
$$
Define $\widetilde{ \bS  }_{xt}^w = \widehat{ \bS }_{xt} + \bB_t^\T  (\bar{\bW} - \bar{\bW}_t)$ and $\widetilde{ \bbeta }^w_{\RI} = \bS_{xx}^{-1} ( \widetilde{ \bS }_{x1}^w  - \widetilde{ \bS }_{x0}^w ) . $ 
According to the same argument as \eqref{eq::higher-order},
$\widehat{ \bbeta}_{\RI}$ and $\widetilde{ \bbeta }^w_{\RI}$ have the same asymptotic covariance, and in the following we need only to discuss the covariance of $\widetilde{ \bbeta }^w_{\RI}$.
Because
\begin{eqnarray*}
\widetilde{  \bS }_{x1}^w  - \widetilde{  \bS  }_{x0}^w
&=& 
\frac{1}{n_1} \sumn T_i  \left\{   Y_i(1) \bX_i    +  \bB_1^\T  ( \bar{\bW} - \bW_i )       \right\}  
 -  
\frac{1}{n_0}   \sumn (1-T_i) \left\{    Y_i(0)  \bX_i + \bB_0^\T  ( \bar{\bW} - \bW_i  )   \right\} \\
&=& \frac{1}{n_1} \sumn T_i \bE_i(1) - \frac{1}{n_0}   \sumn (1-T_i) \bE_i(0)
\end{eqnarray*} 
can be represented as the difference between the sample means of $\bE_{i}(1)$ and $\bE_{i}(0)$, applying Theorem \ref{thm::randomization-inference} we can obtain its covariance:
\begin{eqnarray*}
\cov\left( \widetilde{  \bS }_{x1}^w  - \widetilde{  \bS  }_{x0}^w \right)  =
   \frac{  \mathcal{S} \{ \bE(1)  \}    }{n_1} 
   + \frac{ \mathcal{S}\{  \bE(0)   \}   }{n_0}  
   - \frac{  \mathcal{S} \{  \bDelta    \}    }{ n},
\end{eqnarray*}
which completes the proof.
\end{proof}

\begin{proof}
[Proof of Theorem \ref{thm::variance-ratio-theorem}.]
For simplicity, we abuse the variance and covariance notation for finite population. For example, $ \var\{  Y(0) \} = \sumn \{ Y_i(0) -\bar{Y}(0)  \}^2/(n-1).$
If $\var\{  Y(1) - \bX ^\T  \bbeta  \} \leq \var\{  Y(0) \}$, then $  \var\{  Y(0)  + \varepsilon  \} \leq \var\{  Y(0) \} .$ Expanding the left hand side,
$$
\var\{  Y(0) \} + \var\{ \varepsilon \} + 2\cov\{  Y(0) , \varepsilon \} \leq \var\{   Y(0) \},
$$
which implies 
$
2\cov\{  Y(0) , \varepsilon \} \leq - \var\{ \varepsilon \} < 0.
$
\end{proof}

Although it is straightforward to prove the conclusion for super population inference of Theorem \ref{thm::variance-ratio-test} by using \citet[][Theorem 2, Supplementary Material]{ding::2015jrssb} and Slutsky's Theorem, it is less obvious to prove the conclusion for finite population inference. To simplify the proof, we first prove the following lemma. 
Let $(c_1, \cdots, c_n)^\T $ and $(d_1, \ldots, d_n)^\T  $ be two vectors of nonnegative constants with the same mean $m>0$ but different variances $S_{cc}$ and $S_{dd}$. The difference vector $(c_1-d_1, \ldots, c_n - d_n)^\T $ has mean zero and variance $S_{c-d,c-d}$.  Let 
$$
\widehat{\theta}_c = \frac{1}{n_1} \sum_{i=1}^n T_i c_i , \quad 
\widehat{\theta}_d = \frac{1}{n_0} \sum_{i=1}^{n} (1-T_i) d_i 
$$ 
be two sample means of the treatment and control group, respectively.

\begin{lemma}
\label{lemma::ratio-asymptotics}
Under the regularity conditions for the FPCLT, $\log \widehat{\theta}_c - \log \widehat{\theta}_d$ has asymptotic mean zero and variance
\begin{eqnarray}
\frac{1}{m^2} \left(   
\frac{S_{cc}}{n_1} + \frac{S_{dd}}{n_0} - \frac{S_{c-d,c-d}}{n}
\right).
\label{eq::variance-log-diff}
\end{eqnarray}
\end{lemma}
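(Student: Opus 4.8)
The plan is to recognize $\widehat{\theta}_c - \widehat{\theta}_d$ as an ordinary Neyman difference-in-means estimator and then transfer its exact mean and variance to $\log\widehat{\theta}_c - \log\widehat{\theta}_d$ by a delta-method linearization. Concretely, set $V_i(1) = c_i$ and $V_i(0) = d_i$, so that $\widehat{\theta}_c - \widehat{\theta}_d = \bar{\bm V}_1^{\obs} - \bar{\bm V}_0^{\obs}$ is exactly the estimator of Theorem~\ref{thm::general-neyman} with a scalar outcome ($K=1$). Since $c$ and $d$ share the common mean $m$, the target equals $n^{-1}\sumn (c_i - d_i) = 0$, so the scalar version of Theorem~\ref{thm::general-neyman} gives both $E(\widehat{\theta}_c - \widehat{\theta}_d) = 0$ and
$$
\var(\widehat{\theta}_c - \widehat{\theta}_d) = \frac{S_{cc}}{n_1} + \frac{S_{dd}}{n_0} - \frac{S_{c-d,c-d}}{n},
$$
exactly, with no asymptotics required for this step.

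Next I would establish consistency: because $\widehat{\theta}_c$ and $\widehat{\theta}_d$ are each unbiased for $m$ with variances of order $1/n$ under Condition 1, both converge in probability to $m > 0$, so $\log$ is differentiable in a neighborhood of the common limit. The key simplification is that, since both sample means are centered at the \emph{same} value $m$, a first-order Taylor expansion of each logarithm around $m$ produces a clean cancellation,
$$
\log\widehat{\theta}_c - \log\widehat{\theta}_d = \tfrac{1}{m}(\widehat{\theta}_c - m) - \tfrac{1}{m}(\widehat{\theta}_d - m) + R_n = \tfrac{1}{m}(\widehat{\theta}_c - \widehat{\theta}_d) + R_n,
$$
where $R_n$ collects the quadratic remainders. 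This is the source of the $1/m^2$ prefactor in \eqref{eq::variance-log-diff}, and it means I never need the full joint covariance matrix of $(\widehat{\theta}_c, \widehat{\theta}_d)$: the leading term is the single scalar $\widehat{\theta}_c - \widehat{\theta}_d$ whose exact law I already control.

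It then remains to show that $R_n$ is asymptotically negligible and that the leading term is asymptotically normal. For the latter, the FPCLT of \citet{li2017general} applied to the scalar population $\{c_i - d_i\}_{i=1}^n$ under Condition 1 yields asymptotic normality of $\widehat{\theta}_c - \widehat{\theta}_d$, hence of $\tfrac{1}{m}(\widehat{\theta}_c - \widehat{\theta}_d)$. For the remainder, each squared deviation $(\widehat{\theta}_c - m)^2$ and $(\widehat{\theta}_d - m)^2$ is $O_P(1/n)$, one order smaller than the $O_P(n^{-1/2})$ leading term, so $R_n$ affects neither the asymptotic mean (zero) nor the asymptotic variance; this is the same higher-order bookkeeping used for the ratio estimator in the proof of Theorem~\ref{theorem::ols} (cf.\ \eqref{eq::higher-order}). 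Combining the pieces, $\log\widehat{\theta}_c - \log\widehat{\theta}_d$ has asymptotic mean zero and variance $m^{-2}\var(\widehat{\theta}_c - \widehat{\theta}_d)$, which is precisely \eqref{eq::variance-log-diff}. The main obstacle is purely the control of $R_n$, namely confirming that the quadratic terms are uniformly negligible under the finite-population moment conditions of Condition 1; but given the delta-method machinery already invoked elsewhere in the paper, this is routine rather than essential.
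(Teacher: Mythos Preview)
Your proposal is correct and follows essentially the same approach as the paper: linearize $\log\widehat{\theta}_c - \log\widehat{\theta}_d$ by a first-order Taylor expansion at $m$, then apply Neyman's variance formula and the FPCLT to the resulting difference-in-means. The only cosmetic difference is that the paper first records the full joint asymptotic covariance of $(\widehat{\theta}_c,\widehat{\theta}_d)$ before invoking Neyman's formula, whereas you skip directly to $\var(\widehat{\theta}_c - \widehat{\theta}_d)$; your shortcut is sound precisely because the linearization collapses to $(1/m)(\widehat{\theta}_c - \widehat{\theta}_d)$.
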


\begin{proof}
[Proof of Lemma \ref{lemma::ratio-asymptotics}.]
According to the FPCLT, we have the following joint asymptotic normality of $\widehat{\theta}_c$ and $\widehat{\theta}_d$:
$$
\begin{pmatrix}
\widehat{\theta}_c \\
\widehat{\theta}_d
\end{pmatrix}
=\begin{pmatrix}
n_1^{-1} \sum_{i=1}^n   T_i c_i \\
n_0^{-1} \sum_{i=1}^n   (1-T_i)d_i
\end{pmatrix}
\stackrel{a}{\sim} N
\left[
\begin{pmatrix}
m\\
m
\end{pmatrix} ,
\begin{pmatrix}
V_{cc} & V_{cd}\\
V_{cd} & V_{dd}
\end{pmatrix}
\right],
$$
where 
$$
V_{cc} = \frac{n_0}{n_1n} S_{cc},\quad
V_{dd} = \frac{n_1}{n_0n} S_{dd},\quad 
V_{cd}=-\frac{1}{2n} (S_{cc}+S_{dd} -S_{c-d,c-d}).
$$
Applying Taylor expansion at $m$, we have $\log \widehat{\theta}_c - \log\widehat{\theta}_d = \{ (\widehat{\theta}_c-m) - (\widehat{\theta}_d-m) \} / m + o_P(n^{-1/2})$, which, coupled with \citet{neyman::1923}'s variance formula, gives the asymptotic variance of $\log \widehat{\theta}_c - \log\widehat{\theta}_d$ in (\ref{eq::variance-log-diff}).
\end{proof}

\begin{proof}
[Proof of Theorem \ref{thm::variance-ratio-test}.]
First, as a direct consequence of Lemma \ref{lemma::ratio-asymptotics}, the finite sample variance is always larger than the super population variance, unless $S_{c-d,c-d}=0$. Therefore, we need only to show that the test in Theorem \ref{thm::variance-ratio-test} is asymptotically exact for super population inference, and the asymptotic size of the test is no larger than $\alpha$ for finite population inference. 

Second, replacing $\bbeta$ by its consistent estimator $\widehat{\bbeta}_{\RI}$ does not affect the asymptotic distribution of the test statistic, due to Slutsky's Theorem. For simplicity, we treat $\bbeta$ as known in our asymptotic analysis.

With the two ingredients above, Theorem \ref{thm::variance-ratio-test} follows directly from the variance ratio test in \citet[][Theorem 2, Supplementary Material]{ding::2015jrssb}.
\end{proof}

\begin{proof}
[Proof of Corollary \ref{coro::complier-systematic}.]
The conclusion follows from
\begin{eqnarray*}
E\left\{  \frac{1}{n_1} \sumn T_i(1-D_i)(\delta_i-\tau_c)^2 \right\} &=&
E\left\{  \frac{1}{n_1} \sumn T_iI_{(U_i=n)}(\delta_i-\tau_c)^2 \right\}
= \frac{1}{n} \sumn I_{(U_i=n)}(\delta_i-\tau_c)^2,\\
E\left\{  \frac{1}{n_0} \sumn (1-T_i)D_i (\delta_i-\tau_c)^2 \right\}&=&
E\left\{  \frac{1}{n_0} \sumn (1-T_i) I_{(U_i=a)} (\delta_i-\tau_c)^2 \right\}
=  \frac{1}{n} \sumn I_{(U_i=a)} (\delta_i-\tau_c)^2 .
\end{eqnarray*}
\end{proof}

\begin{proof}
[Proof of Corollary \ref{coro::cdf-estimation}.]
We rewrite
\begin{eqnarray*}
\widehat{F}_{11}(y) &=& % \frac{1}{n_1} \sumn T_iD_iI_{(e'_i\leq y)}  = 
 \frac{1}{n_1} \sumn T_i I_{(U_i=c)} I_{ \{ e_i(1)\leq y  \} } + \frac{1}{n_1} \sumn T_i I_{(U_i=a)} I_{ \{ e_i(1)\leq y  \} } ,\\
\widehat{F}_{10}(y) &=& % \frac{1}{n_1} \sumn T_i(1-D_i) I_{(e'_i\leq y)} = 
 \frac{1}{n_1}  \sumn T_i I_{(U_i = n)}  I_{ \{ e_i(1) \leq y  \}  } , \\
\widehat{F}_{01}(y) &=& % \frac{1}{n_0} \sumn (1-T_i) D_i I_{(e'_i\leq y)} = 
 \frac{1}{n_0} \sumn (1-T_i) I_{(U_i=a)} I_{  \{  e_i(0)\leq y  \}} ,\\
\widehat{F}_{00}(y) &=& % \frac{1}{n_0} \sumn (1-T_i)(1-D_i) I_{(e'_i\leq y)} = 
 \frac{1}{n_0} \sumn (1-T_i) I_{(U_i= c)} I_{  \{ e_i(0)\leq y \}  } + \frac{1}{n_0} \sumn (1-T_i) I_{(U_i=n)} I_{  \{  e_i(0)\leq y  \} } .
\end{eqnarray*}
In the above formulas, the random components are the $T_i$'s, and therefore, the corollary follows from Lemma \ref{lemma::cov} and the linearity of expectations.
\end{proof}

\end{document}